\documentclass[a4paper,11pt]{article}
\usepackage{graphicx} 

\usepackage{amsmath}
\usepackage{amsthm}
\usepackage{amsfonts}
\usepackage{amssymb}
\usepackage{xcolor}
\usepackage{parskip}
\usepackage{comment}
\usepackage{breakurl}
\usepackage[hidelinks]{hyperref}
\usepackage[top=21mm, bottom=23mm, left=22mm, right=22mm]{geometry}
\usepackage{bbm}

\newtheorem{thm}{Theorem}[section]
\newtheorem{prop}[thm]{Proposition}
\newtheorem{lem}[thm]{Lemma}
\newtheorem{cor}[thm]{Corollary}
\newtheorem{conj}[thm]{Conjecture}
\newtheorem{ques}[thm]{Question}

\theoremstyle{definition}

\newtheorem{remark}[thm]{\bf Remark}
\newtheorem{df}[thm]{\bf Definition}

\newtheorem{example}[thm]{\bf Example}

\title{The probability that a random graph is even-decomposable}

\author{Oliver Janzer\thanks{Department of Pure Mathematics and Mathematical Statistics, University of Cambridge, United Kingdom. Research supported by a fellowship at Trinity College. Email: \textbf{oj224@cam.ac.uk}.}
	\and
Fredy Yip\thanks{Trinity College, University of Cambridge, United Kingdom. Email: \textbf{fy276@cam.ac.uk}.}}
\date{}

\begin{document}

\maketitle

\begin{abstract}
    A graph $G$ with an even number of edges is called even-decomposable if there is a sequence $V(G)=V_0\supset V_1\supset \dots \supset V_k=\emptyset$ such that for each $i$, $G[V_i]$ has an even number of edges and $V_i\setminus~V_{i+1}$ is an independent set in $G$. The study of this property was initiated recently by Versteegen, motivated by connections to a Ramsey-type problem and questions about graph codes posed by Alon. Resolving a conjecture of Versteegen, we prove that all but an $e^{-\Omega(n^2)}$ proportion of the $n$-vertex graphs with an even number of edges are even-decomposable. Moreover, answering one of his questions, we determine the order of magnitude of the smallest $p=p(n)$ for which the probability that the random graph $G(n,1-p)$ is even-decomposable (conditional on it having an even number of edges) is at least $1/2$.

    We also study the following closely related property. A graph is called even-degenerate if there is an ordering $v_1,v_2,\dots,v_n$ of its vertices such that each $v_i$ has an even number of neighbours in the set $\{v_{i+1},\dots,v_n\}$. We prove that all but an $e^{-\Omega(n)}$ proportion of the $n$-vertex graphs with an even number of edges are even-degenerate, which is tight up to the implied constant.
\end{abstract}

\section{Introduction}

Given two graphs $G_1$ and $G_2$ on vertex set $[n]$, the sum $G_1+G_2$ is the graph on vertex set $[n]$ whose edge set is the symmetric difference of the edge sets $E(G_1)$ and $E(G_2)$. The set of all graphs on vertex set $[n]$ is an $\mathbb{F}_2$-vector space under this operation, and will be denoted $\mathbb{F}_2^{E(K_n)}$. 

Given a graph $H$, not necessarily on vertex set $[n]$, we say that a graph $G$ on vertex set $[n]$ is an \emph{image} of $H$ in $\mathbb{F}_2^{E(K_n)}$ if there is some $S\subseteq [n]$ such that $G[S]$ is isomorphic to $H$ and all vertices in $[n]\backslash S$ are isolated in $G$. 

For a collection of graphs $\mathcal{H}$, an $\mathcal{H}$-graph code (on $n$ vertices) is a collection $\mathcal{C}\subseteq\mathbb{F}_2^{E(K_n)}$ of graphs on vertex set $[n]$ such that $\mathcal{C} + \mathcal{C} = \{C_1+C_2 \hspace{1mm} | \hspace{1mm} C_1, C_2\in\mathcal{C}\}\subseteq\mathbb{F}_2^{E(K_n)}$ does not contain images of any graph $H\in\mathcal{H}$.

Alon et al. \cite{alon2023structured,alon2024graph} initiated the systematic study of the maximum density of $\mathcal{H}$-graph codes. Let $d_{\mathcal{H}}(n)$ be the maximum of $|\mathcal{C}|/2^{\binom{n}{2}}$ over all $\mathcal{H}$-graph codes $\mathcal{C}$ on $n$ vertices. In \cite{alon2023structured}, the growth of $d_\mathcal{H}(n)$ was studied for the family of disconnected graphs, the family of not 2-connected graphs, the family of non-Hamiltonian graphs and the family of graphs containing or avoiding a spanning star. 

This function had in fact been studied earlier for specific families $\mathcal{H}$. The case where $\mathcal{H}$ is the family of all cliques was discussed in the comments of a blog post by Gowers \cite{gowers}. Kalai asked whether in this case we have $d_\mathcal{H}(n) = o(1)$, and this question is still open. On the other hand, $d_\mathcal{H}(n)$ has been exactly determined for the family of graphs with independence number at most two \cite{ellis2012triangle} and the family of graphs with independence number at most three \cite{berger2023k4}. 

In \cite{alon2024graph}, Alon studied $d_{\mathcal{H}}(n)$ in the case where $\mathcal{H} = \{H\}$, and (among other results) determined the order of magnitude for stars and matchings.
Every lower bound construction he used is a \emph{linear graph code}, that is, a graph code which forms a linear subspace in $\mathbb{F}_2^{E(K_n)}$. Motivated by this, Alon proposed the study of the maximum possible density of a linear $\{H\}$-graph code. Note that if $\mathcal{C}$ is a linear subspace of $\mathbb{F}_2^{E(K_n)}$, then $\mathcal{C}+\mathcal{C}=\mathcal{C}$, so a linear $\{H\}$-graph code is a linear subspace $\mathcal{C}\subseteq \mathbb{F}_2^{E(K_n)}$ which does not contain any image of $H$. Let $d_H^\text{lin}(n)$ denote the maximum of $|\mathcal{C}|/2^{\binom{n}{2}}$ over all linear $\{H\}$-graph codes $\mathcal{C}$ on $n$ vertices.

Alon \cite{alon2024graph} observed the equivalence of the asymptotic behaviour of $d^\text{lin}_H(n)$ with the following Ramsey-type problem. In an edge-colouring of $K_n$, a copy of $H$ is \textit{even-chromatic} if every colour appears on an even number of edges in it. Let $r_H(n)$ be the minimum number of colours needed to avoid all even-chromatic copies of $H$ in $K_n$. It can be shown (see \cite{versteegen2023upper} for a proof) that if $r_H(n)$ is sufficiently large, then
\begin{equation*}
    \frac{1}{r_H(n)^{e(H)+1}}\leq d^\text{lin}_H(n)\leq \frac{1}{r_H(n)}. 
\end{equation*}

If $H$ has an odd number of edges, then it is not hard to see that for all sufficiently large $n$, we have $d_H^{\textrm{lin}}(n)=d_H(n)=1/2$ (the extremal construction being the collection of all graphs on vertex set $[n]$ with an even number of edges), and trivially $r_H(n)=1$.
Hence, in what follows we will only be interested in graphs $H$ with an even number of edges. 

Alon \cite{alon2024graph} observed that if $e(H)$ is even, then it follows from the best bound on multicolour Ramsey numbers that $r_H(n)=\Omega(\log n/\log \log n)$. Improving this, Versteegen \cite{versteegen2023upper} showed that for some absolute constant $c$, we have $r_H(n)\geq c\log n$ for all graphs $H$ with an even number of edges. On the other hand, it was shown in \cite{cameron2023new} that $r_{K_4}(n)=n^{o(1)}$, and it was proved in \cite{ge2023new} and \cite{heath2023edge} (independently) that $r_{K_5}(n)=n^{o(1)}$. At the same time, it is easy to construct graphs for which $r_H(n)$ grows polynomially. 
This naturally raises the following question, which was considered by Versteegen \cite{versteegen2023upper}.
\begin{ques}
    For which graphs $H$, with an even number of edges, is $r_H(n) = n^{o(1)}$? 
\end{ques}

To address this question, Versteegen introduced the following class of graphs.

\begin{df}[Even-decomposable graphs] \label{def:decomposable}
    A graph $H$ is \emph{even-decomposable} if there is a sequence $V(H) = V_0\supset V_1\supset\cdots \supset V_k = \emptyset$ such that for each $0\leq i\leq k-1$, $H[V_i]$ has an even number of edges and $V_{i}\backslash V_{i+1}$ is an independent set in~$H$.
\end{df}

Note that any even-decomposable graph has an even number of edges.

\begin{df}[Even-non-decomposable graphs]
    If a graph has an even number of edges, but is not even-decomposable, we call it \emph{even-non-decomposable}.
\end{df}

Observe that $K_4$ is even-non-decomposable. 
Versteegen \cite{versteegen2023upper} conjectured that $r_H(n)$ has polynomial growth if and only if it is even-decomposable.

\begin{conj}[Versteegen \cite{versteegen2023upper}] \label{conj:poly if even decomp}
    $r_H(n) = n^{\Omega(1)}$ if and only if $H$ is even-decomposable. 
\end{conj}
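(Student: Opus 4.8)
The plan is to first reduce the conjecture to a statement about linear graph codes, using the bound $\tfrac{1}{r_H(n)^{e(H)+1}}\le d^\text{lin}_H(n)\le\tfrac{1}{r_H(n)}$ quoted above: $r_H(n)=n^{\Omega(1)}$ is equivalent to $d^\text{lin}_H(n)=n^{-\Omega(1)}$, which in turn says exactly that, for some constant $c=c(H)>0$, every linear subspace $\mathcal{C}\subseteq\mathbb{F}_2^{E(K_n)}$ of codimension at most $c\log_2 n$ contains an image of $H$. I would then prove the two implications separately: \emph{(i)} if $H$ is even-decomposable, every such $\mathcal{C}$ contains an image of $H$; and \emph{(ii)} if $H$ has an even number of edges but is not even-decomposable, there are codes of codimension $o(\log n)$ containing no image of $H$.

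\textbf{The tractable direction (i).} I would induct on the length $k$ of an even-decomposition $V(H)=V_0\supset V_1\supset\dots\supset V_k=\emptyset$. Put $A:=V_0\setminus V_1$ and $H':=H[V_1]$; then $A$ is independent, $H'$ is even-decomposable, and $e(A,V_1)=e(H)-e(H')$ is even. Given a code $\mathcal{C}$ of codimension $d\le c\log n$, cut out by parity functionals $G\mapsto\langle G,w_j\rangle$, $j=1,\dots,d$, I would apply the inductive hypothesis in a strengthened form — producing not one but a large, ``movable'' family of images of $H'$ in $\mathcal{C}$ — and then attach a copy of $A$ to a suitable one of them. Attaching $A$ adds a bipartite graph $F$ with the \emph{even} number $e(A,V_1)$ of edges, and since the image of $H'$ already lies in $\mathcal{C}$, the enlarged graph lies in $\mathcal{C}$ exactly when $\langle F,w_j\rangle=0$ for all $j$; the point is to choose the locations of the $|A|$ new vertices so that all $d$ parities vanish. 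Sorting the candidate locations of each new vertex by its parity signature in $\mathbb{F}_2^d$, of which there are only $2^d\le n^c$ against $\Omega(n)$ candidates, yields large signature classes, and the evenness of $e(A,V_1)$ is what lets the resulting contributions to $\langle F,\cdot\rangle$ be cancelled against each other in pairs. The admissible $c$ shrinks at each layer by a factor depending only on $|V(H)|$ and $\Delta(H)$, hence stays positive after $k$ steps. (This direction is, in essence, due to Versteegen.)

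\textbf{The main obstacle in (i).} The subtlety — and the reason the inductive statement must be strengthened rather than used verbatim — is that a naive greedy extension can be blocked: one can choose a functional $w_j$ concentrated on the edges incident to two fixed vertices so that \emph{no} single placement of a new vertex gives the right parity, so one cannot freeze the image of $H'$ and then extend. To get around this I would not fix that image first; instead I would place the layers $A_k,A_{k-1},\dots,A_1$ in turn, demanding at each stage only that the \emph{newly added} edges contribute zero to every parity, and I would precede everything by passing to a large subset of $[n]$ on which the joint behaviour of $w_1,\dots,w_d$ is of low complexity (via an iterated pigeonhole/Ramsey-type cleaning), so that the signature classes used in successive layers are mutually compatible. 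Arranging this cleaning to survive all $k$ layers while keeping the host set polynomially large is, I expect, the main technical hurdle of this direction.

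\textbf{The hard direction (ii).} For the converse I would argue contrapositively and try to convert the \emph{failure} of even-decomposability into a colouring: given an even-non-decomposable $H$ with an even number of edges, one must produce an edge-colouring of $K_n$ with $n^{o(1)}$ colours and no even-chromatic copy of $H$, equivalently a linear code of density $n^{-o(1)}$ with no image of $H$. A natural plan is to read off from the non-decomposability a ``local obstruction'' — an odd cut that cannot be removed by any ordered partition of $V(H)$ into independent sets — and to build an algebraic colouring, over a small field or abelian group in the spirit of the $K_4$ colourings, that detects precisely this obstruction while staying essentially injective on copies of $H$ in general position. I expect this to be the genuinely hard part: no such construction is known beyond the single graphs $K_4$ \cite{cameron2023new} and $K_5$ \cite{ge2023new,heath2023edge}, each of which needed a substantial ad hoc argument, and turning the combinatorial description of even-non-decomposability into a colouring uniformly over all $H$ appears to be out of reach — which is precisely why, in place of the full conjecture, one is led to ask how prevalent even-decomposability is among all graphs, the question pursued in this paper.
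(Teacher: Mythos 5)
The statement you were asked about is \emph{Conjecture}~\ref{conj:poly if even decomp}, not a theorem of the paper. The paper does not prove it and does not claim to: it attributes the conjecture to Versteegen, records that Versteegen proved the ``if'' direction (even-decomposable $\Rightarrow$ $r_H(n)=n^{\Omega(1)}$), and leaves the ``only if'' direction open. So there is no ``paper's own proof'' for me to compare your attempt against; the correct reaction to this prompt is to recognise that the statement is conjectural.

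To your credit, your write-up does effectively recognise this. Your reformulation via $d^{\text{lin}}_H(n)$ is the right one, and your split into (i) and (ii) matches the two directions of the conjecture. For (i) you correctly flag that it is due to Versteegen, and the sketch you give — inducting on the layers of an even-decomposition, placing one independent layer at a time, and using the even number of cross-edges together with a pigeonhole over $\mathbb{F}_2^d$-signatures to cancel parities in pairs — is a reasonable reconstruction of the kind of argument one needs, with the right technical worry (you cannot simply freeze an image of $H'$ and then extend). I cannot verify its details against the paper because the paper does not reproduce Versteegen's proof, but nothing in the sketch is obviously wrong. For (ii) you correctly identify that no construction is known beyond $K_4$ and $K_5$, that the contrapositive requires manufacturing an $n^{o(1)}$-colouring from any even-non-decomposable $H$, and that this is genuinely out of reach — which is exactly why the present paper pivots to asking how rare even-non-decomposable graphs are (Theorem~\ref{e^(n^2)}, Theorem~\ref{G(n, p)}) rather than attempting the conjecture itself. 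In short: your proposal is not a proof and should not be presented as one, but it is an accurate account of what is known, what Versteegen proved, and where the genuine obstruction lies.
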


He proved that if $H$ is even-decomposable, then indeed $r_H(n)$ has polynomial growth, verifying the `if' direction of Conjecture \ref{conj:poly if even decomp}.

He also showed that almost all graphs with an even number of edges are even-decomposable. Here and below $G(n,p)$ denotes the binomial random graph on $n$ vertices with edge probability $p$.
\begin{thm}[Versteegen \cite{versteegen2023upper}]\label{versteegenevendecompthm}
    If $H\sim G(n, \frac{1}{2})$, then
    $$\mathbb{P}(\text{$H$ is even-non-decomposable}) = e^{-\Omega(\sqrt{\log n})}.$$ 
\end{thm}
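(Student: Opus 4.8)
The plan is to certify that $H$ is even-decomposable by an explicit vertex-peeling process and to bound the (small) probability that it fails. First condition on $e(H)$ being even: this costs at most a factor $2$, and if $e(H)$ is odd then $H$ is not even-non-decomposable by definition, so there is nothing to prove. The atomic move is: given the current induced subgraph $H[W]$ — which throughout has an even number of edges — delete the lowest-labelled vertex $v\in W$ of even degree in $H[W]$. Deleting an even-degree vertex keeps the edge count even, and, crucially, it preserves randomness: if $H[W]$ is a uniform random graph on $W$ with an even number of edges, then conditioned on which vertex is deleted — an event determined by the degree-parities of finitely many vertices and by $e$ being even — the graph on $W\setminus\{v\}$ is again uniform among graphs on $W\setminus\{v\}$ with an even number of edges. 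One sees this by writing the conditioning as $\mathbb{F}_2$-linear constraints on the edge indicators: all but one of them can be used to express an edge at $v$ in terms of the others, leaving a single genuine constraint on the edges inside $W\setminus\{v\}$, namely that their number is even. Consequently, as long as even-degree vertices keep appearing the process runs along a clean sequence of uniform random graphs, and the only way it can get stuck is by reaching an induced subgraph all of whose degrees are odd (or, as a harmless edge case, one whose only even-degree vertex is the last-labelled one).

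\emph{Bulk phase.} Run the atomic move until $s:=\lceil\sqrt{\log n}\,\rceil$ vertices remain, and declare failure if a stuck state is reached first. By the randomness-preservation above, conditioned on reaching $m$ vertices without any prior stuck state, the residual graph is uniform among graphs on those $m$ vertices with an even number of edges; the probability that all its degrees are odd is at most $2^{-(m-2)}$ (the degree-parity vector being uniform on a subspace of codimension $1$), and the edge case is of the same order. Summing over $m\ge s$ bounds the bulk-phase failure probability by $e^{-\Omega(\sqrt{\log n})}$.

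\emph{Endgame.} If the bulk phase succeeds, the remaining $s$ vertices carry a uniform random graph with an even number of edges, and it remains to show any such graph is even-decomposable except with probability $e^{-\Omega(s)}=e^{-\Omega(\sqrt{\log n})}$. For this I would establish a structural lemma of the shape: \emph{a connected graph with an even number of edges, minimum degree at least $3$, and which is not complete, is even-decomposable}; one then notes that a uniform random graph on $s$ vertices has these properties unless it exhibits one of a short list of exceptional features — a vertex of degree at most $2$, disconnectedness, or completeness — each of probability $e^{-\Omega(s)}$.

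\emph{Main obstacle.} The crux is this endgame, and inside it, navigating around the ``$K_4$ trap'': the whole point of Definition~\ref{def:decomposable} is that $K_4$ is not even-decomposable, and more generally no $K_j$ with $j\ge 4$ and $\binom j2$ even is. A naive greedy rule — always remove the lowest-labelled even-degree vertex, and a non-edge whenever stuck — can funnel an otherwise decomposable graph down to such a $K_j$ and fail with constant probability, so the deletions must be chosen to keep a non-edge (in fact an ample supply of non-edges) present at every scale. Note that if the clean structural statement above were simply true, one could apply it directly to $G(n,1/2)$ and obtain a bound like $e^{-\Omega(n)}$ with no bulk phase at all; so presumably the honest statement one can prove is more delicate — available only for small random graphs, or only after substantial peeling — and it is this delicacy, together with the care needed to carry out a peeling that always dodges the clique trap, that I expect to be the main difficulty, and the reason the bound comes out only as $e^{-\Omega(\sqrt{\log n})}$.
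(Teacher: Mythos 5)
Two things to flag before getting into the substance: this theorem is not proved in the paper at all — it is cited from Versteegen's earlier work, and the paper goes on to prove the far stronger Theorem~\ref{e^(n^2)} ($e^{-\Omega(n^2)}$) by an entirely different method (setting aside linearly many vertex-disjoint induced copies of the gadget $F = 2K_{5,5}$, running greedy admissible removals to get stuck at a clique, and then using the gadgets to absorb that clique via Lemma~\ref{1/2 first lem} and Lemma~\ref{1step}). So there is no ``paper proof'' of this exact statement to compare against, only Versteegen's. Your bulk-phase observation — that conditioning on degree parities and then deleting an even-degree vertex leaves a fresh uniform random graph — is correct and is essentially what the paper later formalizes as Lemma~\ref{uniform degree dist} and Lemma~\ref{stochastic forgetfulness} for the even-degenerate argument in Section~\ref{sec:degenerate}; that part is sound.

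The endgame, however, has a genuine gap, and it is not a matter of missing details: your proposed structural lemma is false. Take $G = K_6 - e$, i.e.\ $K_6$ with one edge $uv$ removed. This graph is connected, has minimum degree $4 \ge 3$, is not complete, and has $14$ edges. Yet it is even-non-decomposable: the only independent sets are singletons and $\{u,v\}$; removing a vertex other than $u,v$ leaves $K_5 - e$ with $9$ (odd) edges, which is forbidden; removing $u$ or $v$ alone leaves $K_5$, from which every admissible removal leads to $K_4$; and removing $\{u,v\}$ leaves $K_4$ directly. In all cases you are funnelled into $K_4$, which is even-non-decomposable. So ``connected, min degree $\ge 3$, not complete, $e(G)$ even'' is not sufficient, and the endgame is precisely where the difficulty of the theorem lives — you say as much yourself, but that means the proposal is not a proof and the hard part is unaddressed. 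A further structural issue: even if you had a correct endgame lemma that gives failure probability $e^{-\Omega(s)}$ on $s$ vertices, your two-phase accounting would not naturally produce the exponent $\sqrt{\log n}$; both your bulk phase and your endgame prefer $s$ large, so you would want to skip the bulk phase entirely, and then you are just trying to prove the structural statement for $G(n,1/2)$ directly — exactly the thing you observed cannot be done by this route. The $\sqrt{\log n}$ in Versteegen's bound must come from a different trade-off (for example between a peeling cost and a union-bound cost) that your scheme does not set up. For a route that actually closes, you may find it instructive to look at the mechanism of Lemma~\ref{lem:deterministic 1/2}: the key idea is to reserve many disjoint copies of a gadget that can absorb one clique vertex each, so that the clique that greedy removal gets stuck at (by Lemma~\ref{greedy}) can be eaten away one vertex at a time — this is precisely what breaks the $K_4/K_6-e$ trap.
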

Versteegen conjectured the following much stronger bound for the proportion of even-non-decomposable graphs.

\begin{conj}[Versteegen \cite{versteegen2023upper}] \label{e^(n^2) conj}
    If $H\sim G(n, \frac{1}{2})$, then
    $$\mathbb{P}(\text{$H$ is even-non-decomposable}) = e^{-\Omega(n^2)}.$$ 
\end{conj}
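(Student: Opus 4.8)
The plan is to isolate a deterministic sufficient condition for even-decomposability that is violated only on a family of graphs of entropy $\binom n2-\Omega(n^2)$. The clean starting point is the following. \emph{Lemma: every $K_4$-free graph with an even number of edges is even-decomposable.} To see this, induct on $|V(H)|$. If $H$ is nonempty, $K_4$-free, and has an even number of edges, then it has a nonempty independent set $I$ with $e(H-I)$ even: if some vertex $v$ has even degree, take $I=\{v\}$; otherwise all degrees are odd, whence $|V(H)|\ge4$ (no graph on at most three vertices with an even number of edges has all degrees odd), and since a $K_4$-free graph on at least four vertices is not complete there is a non-edge $uv$, for which $I=\{u,v\}$ works because $e(\{u,v\},V(H)\setminus\{u,v\})=\deg u+\deg v$ is even. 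As $H-I$ is again $K_4$-free with an even number of edges, the induction hypothesis applies. The same reasoning shows that the peeling process — repeatedly deleting from the current vertex set $U$ a nonempty independent set $L$ for which $H[U\setminus L]$ still has an even number of edges — can get stuck only when the current graph is complete on at least four vertices; hence $H$ is even-decomposable iff some run of the peeling avoids all such cliques, and in particular iff the peeling can reach any $K_4$-free graph.

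It therefore suffices to bound the number of \emph{even-non-decomposable} graphs on $[n]$ by $2^{\binom n2-\Omega(n^2)}$; dividing by $2^{\binom n2}$ and noting that conditioning on $e(G)$ being even is harmless (that event has probability $\tfrac12(1+o(1))$) then gives the theorem. By the previous paragraph, $H$ is even-non-decomposable precisely when every run of the peeling is funnelled into a complete graph on at least four vertices. The prototype is $K_m\sqcup\overline K_{\,n-m}$ with $m\ge4$ and $m\equiv0,1\pmod4$: here every removable independent set avoids the clique $K_m$, which is thus untouchable, and the isolated part is maximally rigid. The key structural claim is that this rigidity is, up to the target bound, unavoidable: if $H$ is even-non-decomposable then it contains a vertex set $W$ of size $\Omega(n)$ whose induced subgraph is pinned to one of a bounded number of ``rigid'' shapes — essentially a clique $Q$ behaving as a module together with an almost-edgeless remainder, or, in the regime where the cliques met along the peeling are themselves large, a clique on $\Omega(n)$ vertices — and each such shape determines the adjacency of $\Omega(n^2)$ pairs. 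A union bound over the $2^{O(n\log n)}$ choices of $W$, the position of $Q$ inside it, and the bounded list of rigid shapes then yields the count.

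The main obstacle is this structural claim, i.e. converting ``no run of the peeling escapes'' into global rigidity. The difficulty is that different runs can get stuck at different — and a priori small — cliques, so one cannot simply forbid a fixed subgraph; rather, the fact that \emph{no} choice helps must itself be shown to force structure. I expect this to need a careful local analysis: one argues that whenever the current graph is not ``nearly complete'' there is always a \emph{safe} move, one that neither enlarges a clique nor deletes an entire clique, so that being funnelled into a clique forces near-completeness to propagate down the peeling and, unwound, back up to a rigid subgraph of $H$ on linearly many vertices; the $K_4$-free lemma identifies the safe harbour the peeling aims for and anchors the base of this analysis. Along the way one uses the (easy) facts that with probability $1-e^{-\Omega(n^2)}$ the graph $G(n,\tfrac12)$ has no clique on more than $\varepsilon n$ vertices and every induced subgraph on more than $\varepsilon n$ vertices spans at most $(\tfrac12+o(1))\binom{|W|}2$ edges, which rules out being funnelled while the current graph is still large.
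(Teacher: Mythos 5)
Your opening reduction is sound and matches the paper's setup: the greedy peeling (remove a vertex of even degree, or a non-adjacent pair of odd-degree vertices) can only get stuck at a clique in which every vertex has odd degree, and a clean consequence is that every $K_4$-free graph with an even number of edges is even-decomposable. This is exactly Lemma~\ref{greedy} and Remark~\ref{rem:k4free}. Your count of the entropy budget ($2^{\binom n2-\Omega(n^2)}$ target) is also the right one.

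But the argument does not go further than that. The entire weight of the proof rests on the ``key structural claim'' — that a graph in which \emph{every} peeling run gets funnelled into a clique of size $\geq 4$ must contain a vertex set of size $\Omega(n)$ whose induced subgraph is pinned to one of a bounded list of rigid shapes, determining $\Omega(n^2)$ adjacencies — and you yourself flag it as ``the main obstacle'' and only say you ``expect'' it to follow from a local analysis. That is not a proof, and it is far from obvious that any such structure theorem for even-non-decomposable graphs holds. The worry is concrete: different runs can halt at different small cliques (in $G(n,1/2)$ every clique has only $O(\log n)$ vertices), and the resulting constraint is a complicated intersection of conditions indexed by \emph{all} peeling orders, with no apparent reason to collapse to a bounded list of ``rigid shapes'' on $\Omega(n)$ vertices. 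The slogan ``whenever the current graph is not nearly complete there is a safe move'' is exactly what you would need, but nothing in the sketch establishes it — indeed the hard part of the problem is precisely manufacturing enough ``safe moves'' when the greedy peeling fails.

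The paper sidesteps this by not attempting to characterize the non-decomposable graphs at all. Instead it exhibits an explicit \emph{sufficient} condition for decomposability and shows it holds with probability $1-e^{-\Omega(n^2)}$. The engine is an absorption gadget: let $F$ be two disjoint copies of $K_{5,5}$. Lemma~\ref{1/2 first lem} shows that two independent sets $A,B$ with the right edge pattern can admissibly absorb one extra vertex $v$ while losing at most two vertices from each side, and Lemma~\ref{1step} iterates this to show a single induced copy of $F$ can reduce a stuck clique of size $m$ to one of size at most $\max(m-1,2)$. The deterministic sufficient condition (Lemma~\ref{lem:deterministic 1/2}) is then: $t$ vertex-disjoint induced copies of $F$ together with clique number at most $t$. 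In $G(n,1/2)$ both hold with probability $1-e^{-\Omega(n^2)}$ (the $\Omega(n^2)$ comes from $\Omega(n^2)$ edge-disjoint potential copies of $F$ in $K_n$ and a Chernoff bound, plus a trivial first-moment bound on $K_{cn}$), and the theorem follows. So the gap in your proposal is not a minor detail but the entire mechanism: you would need to either prove the structural/counting claim you posit (which seems genuinely hard and possibly false in the strong form stated), or replace it with a constructive absorption argument of the kind the paper uses.
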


Our first result is a proof of Conjecture \ref{e^(n^2) conj}.

\begin{thm} \label{e^(n^2)}
    If $H\sim G(n, \frac{1}{2})$, then
    $$\mathbb{P}(\text{$H$ is even-non-decomposable}) = e^{-\Omega(n^2)}.$$ 
\end{thm}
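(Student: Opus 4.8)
The plan is to replace even-decomposability by a hands-on \emph{peeling} criterion, to show that a graph failing it must carry a very rigid structure, and then to bound the number of such graphs by a union bound. For the first step, note that if $W\subseteq V(G)$ and we delete from $G[W]$ either a single vertex of even degree, or — when every vertex of $G[W]$ has odd degree — a non-adjacent pair (whose degrees then have equal parity, so deleting them preserves the parity of $e(G[W])$), each such move is a legal step of an even-decomposition. The only way such greedy peeling can fail to reach $\emptyset$ is by arriving at a complete graph $K_m$ with $m\geq 4$: if $G[W]$ is not complete it has a non-adjacent pair, and if some vertex has even degree we delete it. Crucially, if $G$ has an even number of edges and is not even-decomposable, then after any legal step the reduced graph is still not even-decomposable — concatenating a decomposition of the reduced graph with the peeled-off independent sets in reverse order would decompose $G$. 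So $H$ is even-non-decomposable exactly when \emph{every} such peeling is eventually trapped in a clique.

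It is convenient to record a dual reformulation: a graph $G$ with an even number of edges is even-decomposable if and only if $V(G)$ admits a partition $I_1\sqcup\dots\sqcup I_k$ into independent sets whose \emph{quotient graph} $\tilde G$ — on vertex set $[k]$, with $i\sim j$ iff $e_G(I_i,I_j)$ is odd — is even-degenerate; ordering the parts corresponds to ordering $V(\tilde G)$, and the condition $e_G(I_j, I_1\cup\dots\cup I_{j-1})\equiv 0 \pmod 2$ becomes the condition that vertex $j$ of $\tilde G$ has even back-degree. For $H\sim G(n,\tfrac12)$ the parities $e_H(I_i,I_j)$ are independent fair coins over disjoint edge sets, so $\tilde G$ is itself a uniform random graph; since the parts of a random graph have size $O(\log n)$, there are $\Omega(n/\log n)$ of them, and this already gives that $H$ is even-decomposable except with probability $e^{-\Omega(n/\log n)}$ using the companion fact that random graphs are even-degenerate with failure probability exponential in the number of vertices. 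The whole point of the argument is to amplify this to $e^{-\Omega(n^2)}$ by using the freedom in the choice of the partition / peeling.

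The structural heart of the matter is that if $H$ is even-non-decomposable then $H$ carries an obstruction supported on \emph{linearly many} vertices which is exponentially rare in $G(n,\tfrac12)$. A warm-up version: the complement $Z$ of a maximum even-decomposable induced subgraph $U$ is a clique in $H$ in which every vertex has an odd number of neighbours in $U$ — if two vertices of $Z$ were non-adjacent, or a single vertex of $Z$ had even degree into $U$, we could enlarge $U$ (adding that pair, resp.\ that vertex, as a final part). This forces $|Z|\leq\omega(H)$, so the case $\omega(H)\geq\varepsilon n$ — of probability at most $\binom{n}{\varepsilon n}2^{-\binom{\varepsilon n}{2}}=e^{-\Omega(n^2)}$ — is disposed of immediately; but when $\omega(H)$ is small the obstruction $Z$ may be only of size $O(\log n)$ (as for $K_4\cup\overline{K_{n-4}}$), so this must be upgraded. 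In the small-clique regime one wants to steer the peeling, using the abundance of non-adjacent pairs, so that it gets trapped only if $H$ contains a linearly-sized vertex set $X$ on which the relevant induced / bipartite edge-parities lie in an affine subspace of codimension $\Omega(n^2)$ — concretely a linear-sized independent set, or a balanced bipartition of a linear-sized set with an atypically small number of crossing edges, or a similar codimension-$\Omega(n^2)$ event. I expect this amplification to be the main obstacle: one must argue globally that if none of these rare configurations occurs then the peeling, run with the right choices, is never funnelled into a clique, rather than merely arguing step by step.

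The final step is then routine bookkeeping. For each candidate certificate — the trapping clique, or a linear-sized structured set $X$ together with the data needed to reconstruct $H$ on $X$ and its connections — the number of graphs $H$ consistent with it is at most $2^{\binom n2-\Omega(n^2)}$ once all the forced edge-slots are accounted for, and there are at most $2^{O(n\log n)}$ certificates, which is absorbed into the $\Omega(n^2)$ saving. Summing over certificates yields $\mathbb P(H\text{ is even-non-decomposable})=e^{-\Omega(n^2)}$, as claimed.
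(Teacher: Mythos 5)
Your opening observations match the paper's starting point: the greedy peeling (delete a vertex of even degree, or a non‑adjacent pair of odd‑degree vertices) is exactly the paper's Lemma~\ref{greedy}, and the conclusion that peeling can only get stuck at a clique with all degrees odd is the same. Your dispatch of the case $\omega(H)\geq\varepsilon n$ by a first‑moment bound on cliques is also essentially how the paper handles the clique‑number side.

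However, your ``dual reformulation'' has a flaw you should be careful about: the claim that $\tilde G$ is a uniform random graph fails once you require each $I_i$ to be an independent set of $H$, since then the choice of partition depends on $H$ and the bits $e_H(I_i,I_j)$ are no longer unconditioned fair coins; you cannot simply invoke the even‑degeneracy result for a uniform $G(k,1/2)$ on the quotient. But this side remark is not where the argument stands or falls.

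The genuine gap is the one you yourself flag: you do not supply the ``amplification'' that turns a small trapping clique into a codimension‑$\Omega(n^2)$ certificate, and I do not see how your proposed certificates (a linear‑sized independent set, or a linear‑sized set with atypical crossing‑edge count) would arise from a failed peeling. Even‑non‑decomposability of, say, $K_4$ plus isolated vertices is witnessed by a $4$‑vertex structure, and nothing in your outline forces a non‑decomposable graph to contain a \emph{linear}‑sized rigid obstruction; that is precisely the hard content. The paper closes this gap with a constructive \emph{absorption} mechanism rather than a union bound over certificates: it fixes the small gadget $F=K_{5,5}\sqcup K_{5,5}$, shows (Lemma~\ref{1/2 first lem}, Lemma~\ref{1step}) that an induced copy of $F$ can be used to admissibly remove a vertex from any clique it is disjoint from (and then be cleaned up to at most a $K_2$), and proves the deterministic implication (Lemma~\ref{lem:deterministic 1/2}) that if $G$ has $t$ vertex‑disjoint induced copies of $F$ and $\omega(G)\leq t$ then $G$ is even‑decomposable. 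The $e^{-\Omega(n^2)}$ bound then comes from the facts that a fixed $20$‑set induces $F$ with constant probability, that one can find $\Theta(n^2)$ edge‑disjoint $20$‑sets and hence $\Theta(n)$ vertex‑disjoint induced copies of $F$ except with probability $e^{-\Omega(n^2)}$, and that $\omega(G(n,1/2))\geq \Theta(n)$ has probability $e^{-\Omega(n^2)}$. Without some concrete absorber of this kind (or a different explicit global argument), the step you describe as ``the main obstacle'' remains unproved, and the proposal does not establish the theorem.
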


This bound is trivially tight up to the value of the implied constant (e.g. since $K_4$ is even-non-decomposable).

Using Versteegen's result \cite{versteegen2023upper} that if $H$ is even-decomposable, then $r_H(n)=n^{\Omega(1)}$ and Alon's observation \cite{alon2024graph} that $r_H(n)=n^{\Omega(1)}$ if and only if $d_H^{\text{lin}}(n)=n^{-\Omega(1)}$, we obtain the following corollary about graph codes and the Ramsey-type problem discussed above.

\begin{cor}
    There is a positive absolute constant $c$ such that for all but an at most $e^{-cm^2}$ proportion of $m$-vertex graphs $H$ with an even number of edges, we have $d_H^{\text{lin}}(n)\leq n^{-\Omega(1)}$ and $r_H(n)\geq n^{\Omega(1)}$.
\end{cor}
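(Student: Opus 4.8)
The plan is to establish a structural dichotomy for even-non-decomposable graphs and then conclude by a union bound over vertex subsets. Throughout, note that $\mathbb{P}(H\text{ even-non-decomposable})=\mathbb{P}(e(H)\text{ even and }H\text{ not even-decomposable})$, so the conditioning on having an even number of edges is automatically accounted for. The first step is to recast Definition~\ref{def:decomposable} as a peeling process: a graph $H$ with $e(H)$ even is even-decomposable if and only if, starting from $H$, one can repeatedly delete from the current induced subgraph a nonempty independent set whose vertices have an even total current degree (equivalently, a nonempty independent set $I$ with $e(H[\,\cdot\setminus I\,])$ still even) until the empty graph is reached. A convenient sufficient toolkit uses only two kinds of moves: delete one vertex of even current degree, or delete a pair of non-adjacent vertices of equal degree-parity. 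With these moves one can always continue unless the current graph is a complete graph on at least four vertices, which forces the peeling to fail — and a random graph contains no large clique. Two facts are worth isolating for later: if some peeling ever reaches a clique $K_m$, then $K_m$ is an induced subgraph of $H$; and to ``break into'' a clique-like obstruction in order to repair a parity, the rest of the graph must supply an independent set of odd total degree.

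The heart of the proof is a structural lemma: there are absolute constants $\varepsilon,\delta>0$ such that every $n$-vertex even-non-decomposable graph $H$ contains a set $S$ with $|S|\ge\varepsilon n$ for which $H[S]$ is either very sparse, $e(H[S])\le\delta\binom{|S|}{2}$, or very dense, $e(H[S])\ge(1-\delta)\binom{|S|}{2}$; equivalently, an even-non-decomposable graph cannot be quasirandom. The heuristic is that failing to be even-decomposable forces $H$ to be funnelled onto an unbreakable clique-obstruction; that obstruction stays unbreakable only because the surrounding graph repeatedly fails to provide an independent set of odd total degree, and iterating this failure through the peeling forces a large near-independent structure, while if the obstruction-clique is itself large it already supplies a large dense set. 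Making this rigorous is the main obstacle: one must show that a graph carrying only a bounded amount of ``extremal'' structure, spread thinly, can always be peeled down to the empty graph. I would attempt this by induction on $n$, together with a careful analysis of which induced subgraphs an adversary can force a peeling onto, exploiting at each step the large freedom in the choice of independent set to delete; a classification of the possible unbreakable obstructions (complete graphs, certain disjoint unions of them, each ``padded'' with an independent set) would be a natural intermediate target.

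Granting the structural lemma, the bound is routine. It suffices to estimate the probability that $G(n,\tfrac12)$ contains a set $S$ with $|S|=s\ge\varepsilon n$ and $H[S]$ of edge density at most $\delta$ or at least $1-\delta$. For a fixed such $S$, $e(H[S])$ is distributed as $\mathrm{Bin}\!\big(\binom s2,\tfrac12\big)$, so by a Chernoff bound the probability of either event is at most $e^{-\Omega(s^2)}=e^{-\Omega(n^2)}$; summing over the at most $2^n$ choices of $S$ leaves the total probability at $e^{-\Omega(n^2)}$, which is the desired conclusion.
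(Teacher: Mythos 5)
Your proposal is aimed at the wrong target and, even on its own terms, contains an acknowledged gap. In the paper, this Corollary is a short deduction: start from Theorem~\ref{e^(n^2)} (the bound $\mathbb{P}(H\text{ even-non-decomposable})=e^{-\Omega(m^2)}$ for $H\sim G(m,\tfrac12)$, which also bounds the conditional probability given $e(H)$ even since exactly half of all graphs have an even number of edges), then apply Versteegen's theorem that every even-decomposable $H$ satisfies $r_H(n)=n^{\Omega(1)}$, and Alon's observation that $r_H(n)=n^{\Omega(1)}$ if and only if $d_H^{\mathrm{lin}}(n)=n^{-\Omega(1)}$. Your proposal never makes this last step: you end by claiming a probability of $e^{-\Omega(n^2)}$ ``which is the desired conclusion,'' but the Corollary's conclusion is about $d_H^{\mathrm{lin}}$ and $r_H$, and those require the two cited black boxes to be invoked. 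Without them the argument does not prove the statement at all.

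Beyond that, the bulk of your proposal is an attempt to re-derive the underlying probability bound (Theorem~\ref{e^(n^2)}) from scratch, via a ``structural lemma'' asserting that every even-non-decomposable graph on $n$ vertices contains a linear-sized vertex set inducing a $\delta$-sparse or $(1-\delta)$-dense subgraph. You explicitly say that ``making this rigorous is the main obstacle'' and supply no proof. This is not a routine gap: the lemma is the entire content of the hard direction, and it is a genuinely different (and unverified) deterministic criterion from the one the paper uses. The paper's sufficient condition (Lemma~\ref{lem:deterministic 1/2}) is the existence of $t$ vertex-disjoint induced copies of the gadget $F$ together with clique number at most $t$; the verification that $G(n,\tfrac12)$ satisfies this with probability $1-e^{-\Omega(n^2)}$ then uses edge-disjoint $K_{20}$'s and Chernoff, not a union bound over all $\varepsilon n$-subsets. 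Your closing Chernoff-plus-union-bound step is fine as far as it goes, but it rests on an unproved structural claim and does not connect to the quantities $d_H^{\mathrm{lin}}$ and $r_H$ that the statement is actually about.
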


Versteegen noted that, intuitively, the sparser a graph is, the larger its independent sets are, hence the easier it should be to find even-decompositions for it. On the other hand, he pointed out that even very dense graphs can be even-decomposable and that the method used to prove his Theorem~\ref{versteegenevendecompthm} is specific to the case $p=1/2$. Motivated by these observations, he asked to determine the threshold where even-non-decomposable graphs become dominant.

\begin{ques}[Versteegen \cite{versteegen2023upper}] \label{G(n, p) conj}
    What is the minimal $p(n)$ such that when $H\sim G(n, p(n))$, $$\mathbb{P}(\text{$H$ is even-non-decomposable} \hspace{1mm} | \hspace{1mm} \text{$e(H)$ is even}) \geq \frac{1}{2}?$$ 
\end{ques}

We answer this question by showing that the threshold is $p(n) = 1 - \Theta(\frac{1}{n})$.

\begin{thm} \label{G(n, p)}
    There exists an absolute constant $c_1>0$, such that for any $p\leq 1 - \frac{c_1}{n}$ and $H\sim G(n, p)$, we have $$\mathbb{P}(\text{$H$ is even-non-decomposable}) = o(1).$$ 
\end{thm}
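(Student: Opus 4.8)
The plan is to construct an even-decomposition of $G$ greedily. We maintain a set $S$ of surviving vertices, starting from $S=V(G)$, and at each step we delete from $S$ an independent set whose number of edges into $S$ is even, keeping $e(G[S])$ even; we aim to reach $S=\emptyset$. A step is always available unless $G[S]$ is a complete graph all of whose degrees are odd: if some vertex of $G[S]$ has even degree we delete it, and otherwise every non-edge of $G[S]$ joins two odd-degree vertices and can be deleted. Since $e(G[S])$ stays even, the only stuck graphs we could reach are the complete graphs $K_m$ with $4\mid m$; more generally the process fails only if it reaches a \emph{trapped} graph (one with an even number of edges from which no legal deletion sequence terminates), and one checks that every trapped graph is in a suitable sense close to complete. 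So the whole task is to run the process while keeping the surviving graph safely far from complete, and the engine for this is the structure of the complement $\bar G=\overline G$, which is a binomial random graph with edge probability $q=1-p\ge c_1/n$.

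Concretely, I would first show that w.h.p.\ $\bar G$ contains a constant-size set $T$ on which it induces a perfect matching, equivalently $G[T]$ is $K_{|T|}$ with a perfect matching removed; this is a one-line first-moment estimate needing only $q\ge c_1/n$. Such $G[T]$ is even-decomposable, and robustly so: deleting one matched non-edge at a time gives $\overline{M_\ell}\to\overline{M_{\ell-1}}\to\cdots\to\emptyset$, and at every stage the surviving graph still has a perfect matching in its complement. We then reserve $T$, run the greedy process on $V(G)\setminus T$ (never deleting a vertex of $T$), and — when $S$ has been whittled down to $T$, or to another configuration whose complement is a perfect matching — finish via this explicit decomposition. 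To make the reservation robust one should also set aside, inside $V(G)\setminus T$, a large constant number of extra non-edges of $G$, so that throughout the clearing phase the surviving graph provably still contains many pairwise-disjoint non-edges and hence cannot be complete or near-complete.

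The crux is to show that the clearing phase does not stall prematurely: this happens only if the unreserved part of $S$ induces a clique of $G$ (an independent set of $\bar G$) with all degrees odd. While the unreserved part is large, this is ruled out by an a priori upper bound on the independence number of $\bar G[S']$ holding for all not-too-small $S'$ simultaneously, valid across the range $q\ge c_1/n$. When the unreserved part is small, the reserved non-edges force $G[S]$ to contain many disjoint non-edges, so $G[S]$ is not trapped; here one must also choose, step by step, which even-degree vertex or which non-edge to delete — guided by the edge-structure of $\bar G$ so as to keep enough non-edges alive — and argue that the random trajectory of the process never lands on a trapped graph.

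I expect the main obstacle to be exactly this last point, controlling the trajectory of the greedy process near the end, and it is genuinely delicate only when $p$ is extremely close to $1$: then $G$ itself has cliques of size $\Theta(n)$, so one cannot argue that the surviving graph is simply too large to be complete, and the adaptive choices have to be analysed carefully. A secondary but unavoidable task is to check, uniformly over $p\le 1-c_1/n$, that all the pseudorandomness used above — bounds on clique and independence numbers, small-subgraph counts in $\bar G$, and structural facts about $G(n,\Theta(1/n))$ — holds with probability $1-o(1)$, interpolating between the dense regime, where $\bar G$ is dense and $G$ has only small cliques, and the sparse-complement regime, where $\bar G$ behaves like $G(n,\Theta(1/n))$.
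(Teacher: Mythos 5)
Your overall shape --- greedily remove until stuck at an odd-degree clique, having reserved a small piece of the graph to finish from --- coincides with the paper's strategy, but there is a genuine gap at exactly the point you flag. When $p=1-c/n$ the complement $\bar G\sim G(n,c/n)$ has independence number $\Theta(n)$, so the greedy can stall at a clique $C$ of \emph{linear} size, all of whose vertices have odd degree. A reserved set $T$ of constant size with $\bar G[T]$ a matching, plus a ``large constant'' number of extra reserved non-edges, is nowhere near enough to decompose $C\cup T$: with $p$ close to $1$, $T$ is almost surely complete to $C$, so you never reach a configuration whose complement on the surviving set is a perfect matching --- you are stuck holding (essentially) a $K_{\Theta(n)}$ with $O(1)$ non-edges sprinkled in, and the claim ``while the unreserved part is large, stalling is ruled out by an independence-number bound on $\bar G$'' is simply false in this regime, since that bound is itself $\Theta(n)$.

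The paper fixes this by reserving $\Theta(n)$ vertex-disjoint induced \emph{gadgets} (a $20$-vertex union of two copies of $K_{5,5}$ when $p$ is bounded away from $0$ and $1$; an induced $P_3$ in the sparse and dense regimes), and proving absorption lemmas (Lemmas~\ref{1/2 first lem}, \ref{1step}, \ref{dense stage 2}--\ref{sparse stage 3}) showing that a single gadget can destroy a few clique vertices via a short admissible-removal sequence. Making this work needs two ingredients absent from your sketch: a clique bound guaranteeing the stuck clique has fewer vertices than you have gadgets, so you never run out; and control of the bipartite edges between each gadget and the clique vertices it eats (complete to them in the dense regime, sending no edges in the sparse regime), obtained from a max-degree bound on $\bar G$ (resp.\ $G$) together with, for the final cleanup in Lemma~\ref{lem:deterministic dense}, a Dirac/Hamilton-cycle argument on an auxiliary graph of gadgets. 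So the missing idea is not ``keep the surviving graph away from a clique'' but ``accept that you will reach a large clique and set aside linearly many absorbers, with controlled attachments, to dismantle it.''

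One smaller point: your remark that every trapped (even-non-decomposable) graph is ``in a suitable sense close to complete'' is false. For instance, $K_4$ together with a single isolated vertex is even-non-decomposable --- the isolated vertex can only be removed alone (pairing it with a $K_4$-vertex gives an odd cut), after which you are stuck at $K_4$ --- yet it is nowhere near complete. Fortunately nothing else in your proposal hinges on that remark.
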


\begin{prop} \label{G(n, p) triv dir}
    There exists an absolute constant $c_2>0$, such that for any $p\geq 1 - \frac{c_2}{n}$ and $H\sim G(n, p)$, we have $$\mathbb{P}(\text{$H$ is even-decomposable}) = o(1).$$ 
\end{prop}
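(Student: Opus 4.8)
The plan is to pass to the complement. Write $G=\overline{H}$, so that $G\sim G(n,q)$ with $q=1-p\le c_2/n$, and use the extreme sparsity of $G$ to produce a \emph{parity obstruction} to even-decomposability. The key identity is that for every $S\subseteq V(H)$ we have $e(H[S])=\binom{|S|}{2}-e(G[S])$, so the requirement in Definition~\ref{def:decomposable} that $H[V_i]$ have an even number of edges is equivalent to $e(G[V_i])\equiv\binom{|V_i|}{2}\pmod 2$. Since $\binom{m}{2}$ is odd exactly when $m\equiv 2,3\pmod 4$, and since each set $V_i\setminus V_{i+1}$ is independent in $H$ and hence a clique in the almost-empty graph $G$ (so $|V_i|$ drops by only a bounded amount per step), the \emph{required} parity of $e(G[V_i])$ flips roughly once every two steps. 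On the other hand, $i\mapsto e(G[V_i])$ is a non-increasing integer sequence starting from $e(G)$, so its parity can change at most $e(G)$ times over the whole decomposition. As $e(G)\approx c_2 n/2$ will lie well below $n/2$, this should give a contradiction once we show that the required parity genuinely changes $\Omega(n)$ times.

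First I would fix the structural properties of $G$ that we need, all holding with probability $1-o(1)$ uniformly over $q\le c_2/n$ (by monotone coupling it suffices to check them at $q=c_2/n$, where they are routine first-moment/Chernoff estimates): (P1) $G$ is $K_4$-free; (P2) $G$ has at most $\log n$ triangles; (P3) $e(G)\le 0.3n$, taking $c_2$ a small absolute constant, say $c_2=1/2$. Now suppose $H$ admits an even-decomposition $V(H)=V_0\supset V_1\supset\cdots\supset V_k=\emptyset$, and put $n_i=|V_i|$; then $n_0=n$, $n_k=0$, the sequence is strictly decreasing, and each $A_i:=V_i\setminus V_{i+1}$ is a clique of $G$. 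By (P1), $|A_i|\le 3$ for all $i$, so $n_i$ decreases by $1$, $2$ or $3$ per step; call a step with $|A_i|=3$ a \emph{jump}, and note that these $A_i$ are pairwise vertex-disjoint triangles of $G$, so by (P2) there are at most $\log n$ jumps.

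The heart of the proof is a counting argument over the blocks $B_j=\{4j,4j+1,4j+2,4j+3\}$. Call $B_j$ \emph{good} if $1\le j$, $4j+3<n$, and no jump step $i$ has $n_i\in\{4j,4j+1,\dots,4j+6\}$; since each jump disqualifies at most two blocks, the number $g$ of good blocks satisfies $g\ge n/4-2\log n-O(1)\ge n/5$ for large $n$. For a good block $B_j$ I would examine the first step bringing the value down to $\le 4j+3$ and the last step leaving the range $\ge 4j$: because $B_j$ is good, both of these steps have size $\le 2$, and a size-$\le 2$ step cannot leap over the pair $\{4j+2,4j+3\}$ nor over the pair $\{4j,4j+1\}$. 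Hence $(n_i)$ takes some value in $\{4j+2,4j+3\}$ — where $\binom{n_i}{2}$ is odd, forcing $e(G[V_i])$ odd — and, at a later index, some value in $\{4j,4j+1\}$, forcing $e(G[V_i])$ even. So the parity of $e(G[V_i])$ flips at least once strictly inside the part of the process corresponding to $B_j$, and at least once more between $B_j$ and the next good block below it; these $2g-1$ parity flips occur in pairwise disjoint ranges of indices. Since the sequence $\bigl(e(G[V_i])\bigr)$ is non-increasing from $e(G)$ to $0$, its number of parity changes is at most $e(G)$, so $e(G)\ge 2g-1\ge 2n/5-1$, contradicting (P3) for large $n$. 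Therefore, conditional on (P1)--(P3), $H$ is not even-decomposable; since (P1)--(P3) hold with probability $1-o(1)$, the proposition follows.

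The delicate point will be the block-counting step: verifying that each good block is genuinely "fully traversed" by $(n_i)$ despite size-$2$ steps (a small case analysis of the first/last crossing step, using the goodness hypothesis to rule out a jump there), and confirming that the parity flips attributed to distinct good blocks live in disjoint index intervals so that they add up to $\Omega(n)$. Everything else should be routine — the complement identity $e(H[S])=\binom{|S|}{2}-e(G[S])$ reduces the problem to a statement about a monotone integer sequence with prescribed parity, and the probabilistic input (first moment for triangles and $K_4$'s, Chernoff for $e(G)$, monotonicity in $q$) is standard.
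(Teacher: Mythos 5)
Your argument is correct, but it takes a genuinely different route from the paper's. The paper works with the set $S$ of \emph{universal} vertices of $H$ (equivalently, isolated vertices of $\overline{H}$): any $v\in S$ can only be removed alone in a simple admissible step, and since all vertices of $S$ have the same degree at every stage, after removing one $S$-vertex every remaining $S$-vertex has odd degree, so one cannot remove two $S$-vertices in a row. Hence an even-decomposition forces $|S|\le n/2+1$, and with high probability $e(\overline{H})$ is so small that $|S|>n/2+1$. This is a three-line deterministic obstruction plus one Chernoff bound. Your proof instead passes to the complement and runs a parity-counting argument: the condition $e(H[V_i])$ even becomes $e(\overline{H}[V_i])\equiv\binom{|V_i|}{2}\pmod 2$; the right-hand parity has period-$4$ behaviour in $|V_i|$ and (by $K_4$-freeness of $\overline{H}$, so step sizes $\le 3$, together with a bound on triangles) flips $\Omega(n)$ times over the decomposition, while $e(\overline{H}[V_i])$ is a non-increasing sequence from $e(\overline{H})$ to $0$ and can change parity at most $e(\overline{H})$ times, giving a contradiction when $e(\overline{H})$ is small. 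I checked the block-counting step and the disjointness of the $2g-1$ flip intervals, and they do go through (the goodness condition $n_i\notin\{4j,\dots,4j+6\}$ for all jump steps $i$ exactly rules out the size-$3$ steps that could skip a pair $\{4j+2,4j+3\}$ or $\{4j,4j+1\}$, and consecutive intervals $[a_{j},b_{j})$, $[b_{j},a_{j'})$ are disjoint). Both approaches are sound, and both need only routine first-moment/Chernoff input; yours supports a somewhat larger $c_2$ and does not rely on the universal-vertex structure (so it could conceivably transfer to settings without a large universal set), but it requires considerably more bookkeeping than the paper's elegant one-observation proof.
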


Alon \cite{Alonpersonal} asked whether a random graph also has the following stronger (and slightly more natural) property.

\begin{df}[Even-degenerate graphs] \label{def:degenerate}
    A graph is \emph{even-degenerate} if there is an ordering $v_1,v_2,\dots,v_n$ of its vertices such that for each $1\leq i\leq n-2$, $v_i$ has an even number of neighbours in the set $\{v_{i+1},\dots,v_n\}$.
\end{df}

Note that here we do not require the graph to have an even number of edges, and, accordingly, we do not require that $v_{n-1}v_n$ is a non-edge (which would correspond to $v_{n-1}$ having an even number of neighbours in the set $\{v_n\}$). We point out that for an ordering satisfying the above condition, the parity of $e(H[\{v_i, \cdots, v_n\}])$ is the same as the parity of $e(H)$ for each $1\leq i\leq n-1$.

Observe that every even-degenerate graph with an even number of edges is even-decomposable. Indeed, given an even-degenerate graph $H$ with an ordering $v_1,\dots,v_n$ of its vertices as in Definition~\ref{def:degenerate}, we may take $V_i=\{v_{i+1},\dots,v_n\}$ for each $0\leq i\leq n$, and these sets satisfy the conditions in Definition \ref{def:decomposable} provided that $e(H)$ is even.

We answer Alon's question in the following tight form.

\begin{thm} \label{thm:degenerate}
    If $H\sim G(n,\frac{1}{2})$, then
    $$\mathbb{P}(\text{$H$ is not even-degenerate})=e^{-\Theta(n)}.$$
\end{thm}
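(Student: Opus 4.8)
The plan is to prove both bounds separately. For the lower bound on the failure probability (the easier direction, giving $e^{-\Theta(n)}$ from below), I would exhibit a family of "local obstructions" that occur with probability $e^{-\Omega(n)}$ and each of which forces $H$ to be non-even-degenerate. A natural candidate: if a vertex $v$ has the property that every other vertex $u$ has odd degree into $\{v\} \cup (\text{something})$ in a way that blocks $v$ from ever being placed; more concretely, one can look for a small fixed "bad" induced configuration on $O(1)$ vertices whose presence, combined with the other $n-O(1)$ vertices all failing to be removable, is incompatible with even-degeneracy. The cleanest approach is probably to show that with probability $e^{-\Omega(n)}$ the graph $H$ contains \emph{no} vertex $v_1$ with an even number of neighbours among the remaining $n-1$ vertices and also has an even number of edges --- wait, that has probability $\Theta(1)$, not exponentially small. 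Instead I would argue: condition on a specific subset $A$ of constant size inducing a copy of $K_4$ (or another even-non-decomposable graph) and such that every vertex outside $A$ sends an odd number of edges into $A$; this event has probability $2^{-O(1)} \cdot 2^{-(n-O(1))} = e^{-\Theta(n)}$, and I would check that any valid even-degenerate ordering must eventually peel $A$ down to a non-even-decomposable core, giving a contradiction. The details of which fixed gadget works require a short case analysis but are routine.

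For the upper bound --- showing $\mathbb{P}(H \text{ not even-degenerate}) = e^{-O(n)}$ --- I would build the ordering greedily from the back, maintaining the invariant that the current vertex set $S$ has the parity of $e(H)$ (by the remark in the excerpt this parity is preserved once we only ever remove even-degree vertices). At each step, with $|S| = m$ vertices remaining, I need a vertex $v \in S$ with an even number of neighbours inside $S$; equivalently, in the induced subgraph $H[S]$, I need a vertex of even degree. A graph on $m$ vertices has a vertex of even degree unless it is... actually every graph with $m \geq 1$ has a vertex of even degree iff not all degrees are odd, and all degrees odd forces $m$ even; but even then, the \emph{subgraph} $H[S]$ for a random graph will typically have many even-degree vertices. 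The key quantitative claim is: for $H \sim G(n,1/2)$, with probability $1 - e^{-\Omega(n)}$, \emph{every} induced subgraph $H[S]$ with $|S| \geq 2$ that has an even number of edges contains a vertex whose number of neighbours in $S$ is even. If this holds, the greedy back-to-front construction never gets stuck (we only ever restrict to even-edge-count sets, starting from $V(H)$ if $e(H)$ is even, or we can always take the first step freely since for $i \leq n-2$ we just need $v_i$ to have even degree in $\{v_{i+1},\dots,v_n\}$, and the parity of the remaining set tracks $e(H)$), so $H$ is even-degenerate.

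So the heart of the matter is the union bound over all $2^n$ sets $S$: for a fixed $S$ with $|S|=m$, conditioned on $H[S]$ having an even number of edges, the probability that \emph{every} vertex of $S$ has odd degree into $S$ must be shown to be at most $e^{-cm} \cdot 2^{-\Omega(?)}$ small enough to beat $\binom{n}{m} \leq 2^n$. The degree sequence of $H[S]$ inside $S$: the events "$\deg_S(v)$ is odd" are not independent, but one can compute that $\mathbb{P}(\text{all degrees in } H[S] \text{ odd})$ --- over a uniformly random graph on $m$ labelled vertices --- equals (number of graphs on $[m]$ with all degrees odd)$/2^{\binom m2}$, and a classical fact is that this count is $2^{\binom{m-1}{2}}$ when $m$ is even and $0$ when $m$ is odd; hence the conditional probability (given even edge count, which has probability $\approx 1/2$) is roughly $2^{\binom{m-1}{2} - \binom m 2 + 1} = 2^{-(m-1)+1} = 2^{2-m}$. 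Then $\sum_{m} \binom n m 2^{2-m}$ is $\Theta(1)$, not $e^{-\Omega(n)}$ --- so this naive bound is too weak, and \textbf{this is the main obstacle}. To fix it I would not demand that the \emph{first} removable vertex fails for all $S$ simultaneously; rather I would run the greedy process and note that it only fails if at \emph{some} step the current set (which is one of at most $n$ specific sets determined by the run, not all $2^n$) has all degrees odd. Better: I would show a more robust statement --- that with probability $1 - e^{-\Omega(n)}$, for \emph{every} $S$ with $|S| \geq \log^2 n$ (say), $H[S]$ has many (say $\geq |S|/3$) even-degree vertices, which follows from a Chernoff/anticoncentration bound since each $\deg_S(v) \bmod 2$ is a sum of $\approx m$ independent bits and the joint distribution can be controlled via a second-moment or martingale argument across vertices --- and handle the small sets $|S| < \log^2 n$ by a separate argument (there are few "reachable" small sets along any greedy path, or one simply notes a random graph on $O(\log^2 n)$ vertices is even-degenerate except with probability $e^{-\Omega(n)}$ after the union bound, since $\binom{n}{\log^2 n}$ is only $e^{O(\log^3 n)} = e^{o(n)}$). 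Carefully interleaving these two regimes, and checking the parity bookkeeping at the transition, is where the real work lies.
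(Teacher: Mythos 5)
Your proposal correctly identifies the two directions and, more importantly, correctly identifies the central obstruction in the upper bound: a naive union bound over all $2^n$ induced subgraphs is far too weak, because the probability that a fixed set $S$ of size $m$ has all odd degrees in $H[S]$ is only about $2^{-(m-1)}$ while there are $\binom{n}{m}$ such sets. However, the two fixes you propose do not close this gap, and the paper's actual argument is of a genuinely different shape.

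Your second fix — showing that with probability $1-e^{-\Omega(n)}$ every $S$ with $|S|\geq\log^2 n$ has at least $|S|/3$ even-degree vertices — is false. For $|S|=m$, the probability of few even-degree vertices is only $e^{-\Omega(m)}$, so the union bound over all sets of size $m$ contributes roughly $\binom{n}{m}e^{-cm}$, which for $m\approx\log^2 n$ is around $e^{\log^3 n - c\log^2 n}$, i.e.\ enormous; the statement already fails for $m$ polylogarithmic in $n$. Your first fix — union-bounding only over the $\leq n$ sets actually encountered by the greedy process — does not work as stated either, since those sets are determined adaptively by $H$ itself and the events are not amenable to a direct union bound. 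What is missing is the mechanism the paper uses to make the greedy process tractable: Lemma~\ref{stochastic forgetfulness} shows that after querying degree parities and deleting the chosen vertex, the remaining graph is again a uniformly random $G(n-1,1/2)$; combined with Lemma~\ref{uniform degree dist} this lets the authors prove (Lemma~\ref{key stage 1}) that the greedy removal from a $2n$-vertex graph, choosing the lowest-labelled even-degree vertex at each step, removes, with probability $1-e^{-\Omega(t)}-e^{-\Omega(a^2)}$, a set that is nearly $[n]$ up to small perturbations (a ``$(t,a)$-initial'' set). The number of such candidate sets is only $t^{O(a)}$, which \emph{does} admit a union bound. The remaining ingredient you do not have is the ``linked'' framework (Definition~\ref{def:linked} and Lemma~\ref{link cor}): by applying the greedy removal from both ends of $[2n]$ one reduces non-even-degeneracy of $G$ to the simultaneous non-even-degeneracy of two graphs on $\approx n$ vertices that overlap on only $O(a)$ vertices, and the probability of that is bounded by roughly $2^{O(a)}c_n^2$. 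This yields a doubling recursion $c_{2n}\lesssim c_n^2+(\text{lower order})$ (Proposition~\ref{key}), which the paper solves in two phases: first a short argument giving $c_n=o(1)$ (Subsection~\ref{o(1) bound}), and only then the bootstrapping to $c_n=e^{-\Omega(n)}$. None of this appears in your proposal, and without it the upper bound argument does not go through.

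A smaller point: in your lower-bound discussion you dismiss ``$H$ has all degrees odd'' as having probability $\Theta(1)$, but by Lemma~\ref{uniform degree dist} this event has probability $2^{-(n-1)}$ for $n$ even (and $0$ for $n$ odd, where one instead conditions on an isolated vertex plus all others odd). This is exactly the construction the paper uses (Proposition~\ref{prop:deg lower}), and it is simpler and cleaner than the $K_4$-gadget you substitute for it. So the lower bound is salvageable from your sketch, but the hard upper-bound direction has a genuine, structural gap.
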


\textbf{Organization of the paper.} In Section \ref{sec:decomposable}, we prove our results about the probability that a random graph is even-decomposable. In Subsection \ref{sec:prelim}, we collect some preliminary observations, and briefly describe our proof strategy. In Subsection \ref{1/2 section}, we prove Theorem \ref{e^(n^2)}. In Subsection~\ref{general p section}, we prove Theorem \ref{G(n, p)} and Proposition \ref{G(n, p) triv dir}. In Section \ref{sec:degenerate}, we prove Theorem \ref{thm:degenerate} concerning the probability that a random graph is even-degenerate.

\textbf{Notation.} For a non-negative integer $n$, let $[n] = \{1, \cdots, n\}$. For a graph $G$, we denote by $V(G)$ and $E(G)$ the vertex set and edge set of $G$, respectively, and we denote by $v(G)$ and $e(G)$ the number of vertices and edges, respectively. For a set $A$ of vertices of $G$, let $e(A) = e(G[A])$. For $i\neq j\in V(G)$, let $G_{ij}$ be $1$ if $ij$ is an edge in $G$ and $0$ otherwise. For a vertex set $V$, we denote by $G(V, p)$ the random graph on vertex set $V$ where each pair has probability $p$ of being an edge, independently. Throughout the paper we often omit the rounding functions of real numbers for the clarity of presentation.

\section{Even-decomposable graphs} \label{sec:decomposable}

\subsection{Preliminaries} \label{sec:prelim}

Throughout the paper, we will use the Chernoff bound for the tail of the binomial distribution $\mathrm{Bin}(n, p)$. 
\begin{lem}[Chernoff bound]
    Let $X\sim \mathrm{Bin}(n, p)$, then
    \begin{align*}
        &\mathbb{P}\left(X >(1+\delta) pn\right)\leq e^{-\frac{\delta^2}{2+\delta}np} && \text{for } \delta\geq0, \\
        &\mathbb{P}\left(X <(1-\delta) pn\right)\leq e^{-\frac{\delta^2}{2}np} && \text{for } 0\leq\delta\leq1. 
    \end{align*}
\end{lem}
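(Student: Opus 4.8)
The plan is to use the standard exponential-moment (Cramér--Chernoff) method. Write $X=\sum_{i=1}^n X_i$, where the $X_i$ are independent $\mathrm{Bernoulli}(p)$ random variables, so that for every real $t$ we have, by independence,
$$\mathbb{E}[e^{tX}]=\prod_{i=1}^n\mathbb{E}[e^{tX_i}]=(1-p+pe^t)^n=\bigl(1+p(e^t-1)\bigr)^n\leq e^{np(e^t-1)},$$
where the last step uses the inequality $1+x\leq e^x$.

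For the upper tail, fix $t>0$ and apply Markov's inequality to $e^{tX}$: $\mathbb{P}(X\geq a)\leq e^{-ta}\,\mathbb{E}[e^{tX}]\leq e^{-ta+np(e^t-1)}$. Taking $a=(1+\delta)pn$ and minimising the exponent over $t>0$ leads to the choice $t=\ln(1+\delta)\geq 0$ (the case $\delta=0$ being trivial), which yields
$$\mathbb{P}\bigl(X>(1+\delta)pn\bigr)\leq\left(\frac{e^{\delta}}{(1+\delta)^{1+\delta}}\right)^{np}.$$
It then remains to check the one-variable inequality $\frac{e^{\delta}}{(1+\delta)^{1+\delta}}\leq e^{-\delta^2/(2+\delta)}$ for all $\delta\geq 0$. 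Taking logarithms, this is equivalent to $(1+\delta)\ln(1+\delta)\geq\delta+\frac{\delta^2}{2+\delta}=\frac{2\delta(1+\delta)}{2+\delta}$, i.e. to $\ln(1+\delta)\geq\frac{2\delta}{2+\delta}$. Both sides vanish at $\delta=0$, and the derivative comparison $\frac{1}{1+\delta}\geq\frac{4}{(2+\delta)^2}$ reduces to $(2+\delta)^2\geq 4(1+\delta)$, i.e. $\delta^2\geq 0$, so the inequality holds.

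For the lower tail, fix $t>0$ and apply Markov's inequality to $e^{-tX}$: $\mathbb{P}(X\leq a)\leq e^{ta}\,\mathbb{E}[e^{-tX}]\leq e^{ta+np(e^{-t}-1)}$. Taking $a=(1-\delta)pn$ and optimising leads to $t=-\ln(1-\delta)\geq 0$, which gives
$$\mathbb{P}\bigl(X<(1-\delta)pn\bigr)\leq\left(\frac{e^{-\delta}}{(1-\delta)^{1-\delta}}\right)^{np}.$$
To conclude I verify $\frac{e^{-\delta}}{(1-\delta)^{1-\delta}}\leq e^{-\delta^2/2}$ for $0\leq\delta<1$; after taking logarithms this amounts to $f(\delta):=(1-\delta)\ln(1-\delta)+\delta-\tfrac{\delta^2}{2}\geq 0$, and since $f(0)=f'(0)=0$ and $f''(\delta)=\frac{\delta}{1-\delta}\geq 0$ on $[0,1)$, we get $f\geq 0$. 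The boundary case $\delta=1$ follows directly since $\mathbb{P}(X<0)=0$ (or, reading the bound as $\mathbb{P}(X=0)=(1-p)^n\leq e^{-np}\leq e^{-np/2}$).

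There is no genuine obstacle here, as this is a classical estimate; the only mildly delicate points are the two elementary calculus inequalities used to pass from the sharp closed forms $\frac{e^{\delta}}{(1+\delta)^{1+\delta}}$ and $\frac{e^{-\delta}}{(1-\delta)^{1-\delta}}$ to the clean exponents $e^{-\delta^2/(2+\delta)}$ and $e^{-\delta^2/2}$.
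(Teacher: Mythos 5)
Your proof is correct: the exponential-moment (Cram\'er--Chernoff) argument, the optimal choices $t=\ln(1+\delta)$ and $t=-\ln(1-\delta)$, and the two calculus inequalities $\ln(1+\delta)\geq\frac{2\delta}{2+\delta}$ and $(1-\delta)\ln(1-\delta)+\delta-\frac{\delta^2}{2}\geq 0$ are all verified accurately, and you handle the boundary case $\delta=1$ properly. The paper itself offers no proof of this lemma --- it is quoted as the standard Chernoff bound --- so there is nothing to compare against; your write-up is simply the classical derivation of that standard fact.
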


Note that both are $e^{-\Omega(np)}$ for $\delta$ bounded away from $0$.

Recall that an even-decomposition consists of repeated removals of independent sets until we obtain the empty graph, such that at each stage the remaining graph has an even number of edges. The following notion will refer to a removal of vertices that is allowed in an even decomposition.

\begin{df}[Admissible removals]
    The removal of a set $S$ of vertices from a graph $G$ is \emph{simple admissible} if $S$ is an independent set in $G$ and the number of edges between $S$ and $V(G)\setminus S$ is even. A removal of a (not necessarily independent) set of vertices from a graph is \emph{admissible} if it may be facilitated by a sequence of simple admissible removals. 
\end{df}

Then a graph with an even number of edges is even-decomposable if and only if we can admissibly remove all of its vertices.

We note that we may simply admissibly remove a vertex of even degree by itself or a pair of non-adjacent vertices of odd degree. In fact, it is easy to see that any simple admissible (and hence also any admissible) removal can be facilitated by a sequence of such removals (though we will not make use of this observation).

\begin{remark}
    In what follows, when we perform a sequence of deletions of vertices in a graph, the degree of a vertex shall always refer to the degree in the remaining graph (as opposed to its degree in the original graph).
\end{remark}

The following lemma shows that given any set of vertices, we can keep (admissibly) removing vertices until we get stuck with a clique in which every vertex has odd degree (with respect to the entire remaining graph).

\begin{lem}[Greedy Admissible Removal]\label{greedy}
    For any set $S$ of vertices in a graph $G$, we may admissibly remove vertices from $S$, such that the remaining vertices in $S$ form a clique and all have odd degree in the remaining graph. 
\end{lem}

\begin{proof}
    We greedily perform simple admissible removals of vertices from $S$. Once no further simple admissible removals are possible, there cannot be any vertices of even degree left in $S$ as they could be simply admissibly removed by themselves. Similarly there cannot be two non-adjacent vertices of odd degree left in $S$, as such a pair could also be simply admissibly removed.
\end{proof}

\begin{remark} \label{rem:k4free}
From Lemma \ref{greedy} it follows that any graph with an even number of edges and without copies of $K_4$ is even-decomposable. Indeed, applying Lemma \ref{greedy} to the entire vertex set of such a graph results in $K_m$ for some $m\leq 3$. As $K_m$ must have an even number of edges and all vertices have odd degree, we must have $m = 0$ and the graph is even decomposed.
\end{remark}

Lemma \ref{greedy} shows that if we greedily keep applying simple admissible removals to a graph, we can only get stuck with a clique.

\textbf{Proof strategy.} Our proof strategy for both Theorem \ref{e^(n^2)} and Theorem \ref{G(n, p)} is to put aside linearly many vertex-disjoint induced copies of a carefully chosen small `gadget' graph. The exact choice of this gadget graph will depend on the probability $p$ that we are considering, but it will be designed to be able to `absorb' (via an admissible removal of a set of vertices) a vertex in a clique, so that, using these gadget graphs we can destroy any large clique that we may be stuck with after applying the greedy removal (Lemma \ref{greedy}). We will prove both main theorems by identifying deterministic conditions (see Lemma \ref{lem:deterministic 1/2} and Lemma \ref{lem:deterministic dense} for Theorem \ref{e^(n^2)} and Theorem~\ref{G(n, p)}, respectively) involving the existence of these gadget graphs as well as a bound on the size of the largest clique in the graph, which together imply even decomposability, and then arguing that in a random graph those conditions are satisfied with very high probability.

\subsection{Uniform distribution \texorpdfstring{$G(n, \frac{1}{2})$}{}}\label{1/2 section}

In this subsection, we prove Theorem \ref{e^(n^2)}.

The following lemma describes how a gadget graph is used to absorb a vertex, and captures one of the key ideas in our proof.

\begin{lem} \label{1/2 first lem}
    Let $G$ be a graph. Let $A$ and $B$ be disjoint independent sets in $G$ of size at least two. Let $v$ be a vertex outside $A\cup B$. Assume also that every $a\in A$ sends both an edge and a non-edge to $B$, and every $b\in B$ sends both an edge and a non-edge to $A$. Then we may admissibly remove vertices from $A\cup B\cup \{v\}$ so that $v$ is removed but at most two vertices are removed from each of $A, B$. 
\end{lem}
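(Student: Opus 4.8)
The plan is to realise the required admissible removal as a short sequence of only two very simple types of move, after first shrinking $A$ and $B$ to two‑element subsets. The starting observation is a reformulation of simple admissibility: if $S$ is an independent set in the current graph, then the number of edges between $S$ and its complement equals $\sum_{s\in S}\deg(s)$ (degrees in the current graph), so removing $S$ is simple admissible if and only if $\sum_{s\in S}\deg(s)$ is even. In particular the following two moves are always simple admissible: (M1) remove a single vertex of even degree; (M2) remove two non-adjacent vertices whose degrees have equal parity. The whole sequence will consist only of moves of these two types, applied to $v$ together with a bounded number of vertices of $A\cup B$.

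Next I would reduce to the case $|A|=|B|=2$ with a perfect matching between the two parts. The hypothesis says exactly that in the bipartite graph $H$ between $A$ and $B$, no vertex is isolated and no vertex is complete to the other side. Recall that a bipartite graph has no induced $2K_2$ precisely when the neighbourhoods of the vertices on one side are linearly ordered by inclusion; but then the vertex $a\in A$ with $N(a)$ minimal is non‑isolated, so it has a neighbour $b\in B$, and since $N(a)\subseteq N(a')$ for every $a'\in A$, this $b$ is adjacent to all of $A$, contradicting the hypothesis. Hence $H$ contains an induced $2K_2$: there are $a_1,a_2\in A$ and $b_1,b_2\in B$ with $a_1b_1,a_2b_2\in E(G)$ and $a_1b_2,a_2b_1,a_1a_2,b_1b_2\notin E(G)$. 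It now suffices to admissibly remove $v$, at most two vertices of $\{a_1,a_2\}$ and at most two of $\{b_1,b_2\}$, so from now on only the five vertices $v,a_1,a_2,b_1,b_2$ are involved, the rest of $G$ entering only through their current degrees.

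Then comes the case analysis. If $\deg_G(v)$ is even, remove $v$ by (M1) and stop. Otherwise $\deg_G(v)$ is odd. If some $w\in\{a_1,a_2,b_1,b_2\}$ of odd degree is non‑adjacent to $v$, remove $\{v,w\}$ by (M2). If some such $w$ of even degree is adjacent to $v$, remove $w$ by (M1), which flips the parity of $\deg(v)$, and then remove $v$. In the remaining case, among $a_1,a_2,b_1,b_2$ adjacency to $v$ coincides with having odd degree. Here, for instance, if all four are adjacent to $v$ then all four have odd degree, so $b_1,b_2$ are non‑adjacent of equal parity and we remove $\{b_1,b_2\}$ by (M2); now $\deg(a_1)$ is even while $\deg(v)$ is still odd, so we remove $a_1$ by (M1) and finally $v$ — three vertices of $A\cup B$, but two from $B$ and only one from $A$. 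The remaining configurations of this last case (only a handful, up to the symmetries $A\leftrightarrow B$ and $(a_1,b_1)\leftrightarrow(a_2,b_2)$) are handled in the same spirit: one move of type (M1) or (M2) built around a matching edge $a_ib_i$ either flips $\deg(v)$ to even — after which we remove $v$ — or creates an odd‑degree non‑neighbour of $v$ that is then paired with $v$; in each branch at most two vertices are deleted from one of $\{a_1,a_2\},\{b_1,b_2\}$ and at most one from the other.

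I expect the only genuine obstacle to be this last ``bad'' case. Once adjacency to $v$ lines up exactly with odd degree on $A\cup B$, no single simple admissible removal inside $A\cup B$ can repair the parity of $\deg(v)$: deleting an odd‑degree neighbour of $v$ forces one to pair it (via (M2)) with a second odd‑degree vertex, which is again a neighbour of $v$, leaving the parity of $\deg(v)$ unchanged. So two removals are unavoidable, and the delicate point is to keep the total at ``at most two from each side''. This is exactly where both halves of the hypothesis are used: the matching edges supplied by the induced $2K_2$ allow one deletion to cleanly flip the parity of a neighbour of $v$, and having two vertices available on each side guarantees the partner vertex required for (M2) (or the spare vertex for (M1)) exists. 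Verifying that the two‑per‑side budget is never exceeded in any branch is the one place that requires a little care.
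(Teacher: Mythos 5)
Your proof is correct and takes a somewhat different route from the paper's, so a comparison is worthwhile. Both proofs begin with the same reduction (the paper's ``Claim''): if $\deg(v)$ is even, or some $w\in A\cup B$ is even-degree and adjacent to $v$, or odd-degree and non-adjacent to $v$, then one or two of your moves (M1)/(M2) remove $v$ together with at most one other vertex; the remaining case is where, on the relevant vertices, adjacency to $v$ coincides with odd degree. The paper then works with all of $A\cup B$ directly: if some $x$ has even degree, removing it flips the parity of a neighbour on the other side, which then falls under the Claim; if everyone is odd-degree, take $a\in A$ with an edge $ab_1$ and a non-edge $ab_2$ and remove $\{b_1,b_2\}$, then $\{a\}$, then $\{v\}$. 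You instead first extract an induced $2K_2$ via the chain-graph characterization of $2K_2$-free bipartite graphs --- a correct extra structural step that the paper avoids, since it never needs more than the fact that each vertex has both a neighbour and a non-neighbour across the bipartition --- and then run a fixed case analysis on $\{a_1,a_2,b_1,b_2,v\}$. The cost of that reduction is casework: beyond the all-odd pattern you write out, there are six parity patterns of $(a_1,a_2,b_1,b_2)$ up to your symmetries, and you only wave at them. I checked that each does close with the same (M1)/(M2) moves around a matching edge, always with at most two deletions from one of $\{a_1,a_2\},\{b_1,b_2\}$ and at most one from the other, so the budget is respected; but those patterns should be spelled out, since the two-per-side bound is exactly the delicate point you yourself flag. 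In sum, your $2K_2$ reduction trades a tidier combinatorial setting for heavier case analysis, whereas the paper deploys the hypothesis more parsimoniously, each half only at the moment it is actually needed.
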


\begin{proof}
    We start with the following claim.

    \noindent \textit{Claim.} If there is some $x\in A\cup B$ of even degree such that $xv$ is an edge, or there is some $x\in A\cup B$ of odd degree such that $xv$ is a non-edge, then we may admissibly remove $v$ and at most one vertex from $A\cup B$.

    \noindent \textit{Proof of Claim.} If $v$ has even degree, we may admissibly remove $\{v\}$. Hence, we may assume that $v$ has odd degree. Let $x\in A\cup B$. If $x$ has even degree and $xv$ is an edge, then we may (simply) admissibly remove $\{x\}$ first, and then $\{v\}$. If $x$ has odd degree and $xv$ is a non-edge, then we may (simply) admissibly remove $\{x,v\}$. $\Box$
    
    Using the claim, we can assume that
    
    $(\ast)$ for each $x\in A\cup B$, $xv$ is an edge if and only if $x$ has odd degree. 
    
    If there is any $x\in A\cup B$ with even degree, without loss of generality, assume that $x\in A$. Let $xb$ be an edge for some $b\in B$. We may admissibly remove $\{x\}$, which flips the parity of the degree of $b$, after which $bv$ is an edge if and only if $b$ has even degree. Hence, by the proof of the claim above, we may admissibly remove $v$ and at most one vertex in $A\cup B$, completing the proof of the lemma. 

    Hence, we may assume that all vertices in $A\cup B$ have odd degree, and hence, by $(\ast)$, $xv$ is an edge for all $x\in A\cup B$. We may also assume that $v$ has odd degree (otherwise we can remove $\{v\}$). Let $ab_1$ be an edge and $ab_2$ be a non-edge, where $a\in A$, $b_1, b_2\in B$. Removing first $\{b_1, b_2\}$, then $\{a\}$, and, finally, $\{v\}$, we have performed a suitable admissible removal.
\end{proof}

Copies of the following graph will be used to `absorb' cliques via Lemma \ref{1/2 first lem}.

\begin{df}
    Let $F$ be the union of two vertex-disjoint copies of $K_{5, 5}$. 
\end{df}

\begin{lem} \label{1step}
    Let $G$ be a graph. Let $R\subset V(G)$ be a set of vertices such that $G[R]$ is isomorphic to $F$, and let $C$ be a clique consisting of $m$ vertices, which is disjoint from $R$. Then we may admissibly remove vertices from $C\cup R$ until it becomes a clique of at most $\max(m-1, 2)$ vertices. 
\end{lem}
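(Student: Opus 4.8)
The plan is to use the two copies of $K_{5,5}$ inside $F$ to absorb vertices of $C$ one at a time via Lemma~\ref{1/2 first lem}, each absorption decreasing $|C|$ by one, and then to clean up whatever remains of $C\cup R$ by invoking the Greedy Admissible Removal (Lemma~\ref{greedy}). The crucial numerical balance is this: each absorption can shrink a bipartition class of $F$ by as much as two vertices, while Lemma~\ref{greedy} may leave behind a clique up to two vertices larger than the part of $C$ it does not disturb; so I would absorb exactly $\min(m,3)$ vertices of $C$, which is both what the class sizes in $F$ can sustain and what is needed to overcome that slack of~$2$.

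Concretely, set $t=\min(m,3)$ and prove by induction on $j$ that for $0\le j\le t$ one can admissibly remove vertices of $C\cup R$ so that the surviving part $R^{(j)}$ of $R$ still induces a disjoint union of two complete bipartite graphs whose four bipartition classes $A_1^{(j)},B_1^{(j)},A_2^{(j)},B_2^{(j)}$ each have size at least $5-2j$, while the surviving part $C^{(j)}$ of $C$ is a clique on $m-j$ vertices. The base case $j=0$ is the hypothesis of the lemma. For the inductive step, with $j\le t-1\le 2$, I note that $5-2j\ge 1$ forces all four classes of $R^{(j)}$ to be non-empty, so, taking $A=A_1^{(j)}\cup A_2^{(j)}$ and $B=B_1^{(j)}\cup B_2^{(j)}$ (independent in $G$ because $G[R]\cong F$) and letting $v$ be any vertex of $C^{(j)}$ (which exists since $m-j\ge m-(t-1)\ge 1$), the hypotheses of Lemma~\ref{1/2 first lem} are met: each vertex of $A_1^{(j)}$ has a neighbour in $B_1^{(j)}$ and a non-neighbour in $B_2^{(j)}$, and symmetrically for the other classes, and $v\notin A\cup B$ as $C\cap R=\emptyset$. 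Lemma~\ref{1/2 first lem} then lets me admissibly remove $v$ along with at most two vertices from each of $A$ and $B$; since deleting vertices from a complete bipartite graph leaves a complete bipartite graph and each of the four classes loses at most two vertices, the invariant passes to $j+1$, with $C^{(j+1)}=C^{(j)}\setminus\{v\}$ a clique on $m-j-1$ vertices.

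After the $t$-th absorption I would be left with a clique $C^{(t)}$ on $m-t$ vertices and a set $R^{(t)}$ inducing a disjoint union of two complete bipartite graphs, which in particular is triangle-free. I would then apply Lemma~\ref{greedy} to $S=C^{(t)}\cup R^{(t)}$, admissibly removing vertices of $S$ until its surviving part is a clique $K$. Writing $K=(K\cap C^{(t)})\cup(K\cap R^{(t)})$, the first piece has at most $m-t$ vertices and the second, being a clique in a triangle-free graph, has at most two, so $|K|\le(m-t)+2$. If $m\ge 3$ then $t=3$ and $|K|\le m-1$; if $m\le 2$ then $t=m$ and $|K|\le 2$; in both cases $|K|\le\max(m-1,2)$. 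Finally, every removal I performed (the $t$ applications of Lemma~\ref{1/2 first lem} and the single application of Lemma~\ref{greedy}) is an admissible removal of vertices of $C\cup R$, so their composition is an admissible removal that reduces $C\cup R$ to the clique $K$ on at most $\max(m-1,2)$ vertices.

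I expect the only delicate part to be the bookkeeping in the induction — checking simultaneously that the four classes of $R^{(j)}$ and the set $C^{(j)}$ all stay non-empty for every $j<t$. This is precisely the reason $F$ is built from copies of $K_{5,5}$ rather than smaller complete bipartite graphs (five vertices per class are just enough to survive three absorptions) and the reason we stop after three absorptions rather than trying to clear $R$ entirely, which in general cannot be done once external edges to $C$ and to the rest of $G$ are taken into account. Everything else reduces to direct appeals to Lemmas~\ref{1/2 first lem} and~\ref{greedy}.
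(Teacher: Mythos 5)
Your proof is correct and follows essentially the same strategy as the paper's: three applications of Lemma~\ref{1/2 first lem} to absorb vertices of $C$ (with the size-$5$ classes of $F$ providing exactly enough slack for three rounds of losing up to two vertices each), followed by a greedy cleanup via Lemma~\ref{greedy}. The only cosmetic differences are that the paper phrases the absorption as a short claim rather than an explicit induction, and it performs the final greedy step in two stages (first on the remainders of $A$ and $B$, then on everything) whereas you apply Lemma~\ref{greedy} once and bound the resulting clique directly using triangle-freeness of $R^{(t)}$; both yield the same bound $\max(m-1,2)$.
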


\begin{proof}
    Since $G[R]$ is isomorphic to $F$, there are disjoint sets $T_1,T_2,S_1,S_2$ of size $5$ in $R$ such that $T_1$ is complete to $S_1$, $T_2$ is complete to $S_2$, and there are no other edges within $G[T_1\cup T_2\cup S_1\cup S_2]$. Let $A = T_1\cup T_2$ and $B = S_1\cup S_2$. We note that $A, B$ are independent sets, and however we remove up to four vertices from each of $A$ and $B$, every remaining vertex in $A$ will send both an edge and a non-edge to the remaining subset of $B$ and vice versa. 

    \noindent \textit{Claim.} We may admissibly remove a set $Q\subset A\cup B\cup C$ containing at least $\min(3,m)$ vertices from~$C$.

    \noindent \textit{Proof of Claim.} If $m=0$, we are already done. Else, choose some $v\in C$ and apply Lemma \ref{1/2 first lem} to $A$, $B$ and $v$. Let $A'\subset A$ and $B'\subset B$ be the remaining subsets, and note that $|A\setminus A'|,|B\setminus B'|\leq 2$. If $m=1$, we are already done, else we can apply Lemma \ref{1/2 first lem} again with $A'$, $B'$ and some $v'\in C\setminus \{v\}$ in place of $A$, $B$ and $v$. Let $A''\subset A$ and $B''\subset B$ be the remaining subsets, and note that $|A\setminus A''|,|B\setminus B''|\leq 4$. If $m=2$, we are already done, else we can apply Lemma \ref{1/2 first lem} again with $A''$, $B''$ and some $v''\in C\setminus \{v,v'\}$ in place of $A$, $B$ and $v$, completing the proof of the claim. $\Box$
    
    After applying the claim above, we have at most $\max(m - 3, 0)$ remaining vertices in $C$.
    We now remove vertices greedily (via Lemma \ref{greedy}) from the remainders of $A$ and $B$. As they are independent sets, at most one vertex will remain from each set. A total of at most $\max(m - 1, 2)$ vertices now remain in $C\cup R$, to which we apply Lemma \ref{greedy} once more so that the remaining vertices form a clique. 
\end{proof}

We now identify deterministic conditions which imply even decomposability.

\begin{lem} \label{lem:deterministic 1/2}
    Let $G$ be a graph with an even number of edges. Assume that there exists some positive integer $t$ such that $G$ contains at least $t$ vertex-disjoint induced copies of $F$ and $G$ has no clique of size greater than $t$. Then $G$ is even-decomposable.
\end{lem}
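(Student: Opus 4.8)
The plan is to combine the greedy removal (Lemma \ref{greedy}) with repeated applications of the single-clique-reduction step (Lemma \ref{1step}), using up the $t$ disjoint induced copies of $F$ one at a time. First I would apply Lemma \ref{greedy} to the entire vertex set $V(G)$; since we are not using any copy of $F$ yet, this is a legitimate admissible removal, and by the lemma it leaves us with a clique $C$ in which every vertex has odd degree. Let $m_0=|C|$. By hypothesis $m_0\le t$, so in particular $C$ is disjoint from all $t$ of our preselected vertex-disjoint induced copies of $F$ (since $C$ consists of vertices still present in the graph, and those copies were set aside — wait, I need to be slightly careful here).

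Here is the main obstacle and how to handle it: the greedy removal in the very first step might delete vertices that lie inside our reserved copies of $F$, destroying them. To avoid this, I would instead \emph{first} fix $t$ vertex-disjoint induced copies $R_1,\dots,R_t$ of $F$, and apply Lemma \ref{greedy} only to the set $S=V(G)\setminus(R_1\cup\cdots\cup R_t)$. This is an admissible removal by Lemma \ref{greedy}, and it leaves behind, within $S$, a clique $C_0$ with all vertices of odd degree in the \emph{remaining} graph (which still contains all the $R_i$ intact). Now $C_0$ is a clique in the original graph $G$, so $|C_0|\le t$ by hypothesis. If $|C_0|\le 2$, I would finish as below; otherwise, apply Lemma \ref{1step} with $R=R_1$ and $C=C_0$ to admissibly remove vertices from $C_0\cup R_1$, leaving a clique $C_1$ of at most $\max(|C_0|-1,2)$ vertices. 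Crucially, this removal only touches $C_0\cup R_1$, so $R_2,\dots,R_t$ remain intact; and $C_1$, being a clique in $G$, still has size at most $t$, but more to the point its size has strictly dropped (until it hits $2$).

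Iterating, at stage $j$ I would apply Lemma \ref{1step} with $R=R_j$ and $C=C_{j-1}$ (as long as $|C_{j-1}|>2$ and we still have an unused copy of $F$), obtaining a clique $C_j$ with $|C_j|\le\max(|C_{j-1}|-1,2)$. Since $|C_0|\le t$, after at most $t-2$ such steps — and we have $t$ copies of $F$ available, which is more than enough — we reach a clique $C$ of size at most $2$. At this point I would invoke the parity argument used in Remark \ref{rem:k4free}: throughout, every removal has been admissible, so the number of edges in the remaining graph has stayed even (this is exactly the defining property of an admissible removal). The remaining graph consists of $C$ together with whatever vertices are left over in the unused $R_i$'s and the leftovers of the used ones; but actually it is cleaner to simply apply Lemma \ref{greedy} one final time to the \emph{entire} remaining vertex set. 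This is an admissible removal and leaves a clique $K_m$ all of whose vertices have odd degree; since a clique $K_m$ has an even number of edges only when $m\in\{0,1\}$ and $K_1$ has a vertex of even (zero) degree, we must have $m=0$. Hence all vertices have been admissibly removed, so $G$ is even-decomposable.

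I expect the only genuinely delicate point to be the bookkeeping that guarantees the reserved copies of $F$ survive until they are used — handled by reserving $R_1,\dots,R_t$ at the outset and observing that each application of Lemma \ref{1step} confines its deletions to $C_{j-1}\cup R_j$ — together with the easy but essential observation that every clique arising in the process is a clique of the original graph $G$ and hence has size at most $t$, so that the $F$-copies always lie outside the current clique as Lemma \ref{1step} requires. The rest is the monotone decrease of the clique size and the final parity check.
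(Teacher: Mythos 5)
Your setup mirrors the paper's exactly — reserve vertex-disjoint copies $R_1,\dots,R_t$ of $F$, run the greedy removal of Lemma~\ref{greedy} on the complement $S = V(G)\setminus\bigcup R_i$ to get a clique $C_0$ of odd-degree vertices with $|C_0|\le t$, then repeatedly feed the current clique together with a fresh $R_j$ into Lemma~\ref{1step}. That part is correct.

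The gap is in your final cleanup. You stop applying Lemma~\ref{1step} as soon as $|C_j|\le 2$, leaving the unused copies $R_{j+1},\dots,R_t$ alive in the graph, and then run Lemma~\ref{greedy} on the \emph{entire} remaining vertex set $C_j\cup R_{j+1}\cup\cdots\cup R_t$. The clique you are left with after that greedy pass need not be small: it is some clique of $G$ (hence could have up to $t$ vertices), and nothing prevents it from being, say, a $K_4$ straddling two of the unused $R_i$'s, since the $R_i$'s are only $K_4$-free internally and the edges between distinct $R_i$'s (or between $C_j$ and an $R_i$) are arbitrary. Compounding this, the assertion that ``a clique $K_m$ has an even number of edges only when $m\in\{0,1\}$'' is simply false — $\binom{m}{2}$ is even whenever $m\equiv 0$ or $1\pmod 4$, and even after adding the constraint that every vertex has odd degree (forcing $m$ even) you are left with $m\in\{0,4,8,\dots\}$, not $m\in\{0\}$. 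So the parity argument does not close the proof once a large clique is possible. The fix is what the paper does: keep applying Lemma~\ref{1step} through \emph{all} $t$ copies $R_1,\dots,R_t$ — the lemma remains applicable when $|C_{j-1}|\le 2$ and simply maintains $|C_j|\le 2$ — so that after the $t$-th application nothing is left outside $C_{t+1}$, which has at most $2$ vertices; only then is the parity check (even edges, all odd degrees, at most two vertices $\Rightarrow$ empty) valid.
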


\begin{proof}
    Let $R_1, \dots, R_{t}$ be the vertex sets of $t$ vertex-disjoint induced copies of $F$. We perform the greedy algorithm (Lemma \ref{greedy}) on the other vertices in $G$ to obtain a clique $C_1$, which must have at most $t$ vertices by the clique number bound. 
    
    We apply Lemma \ref{1step} to $R_1$ and $C_1$ to obtain a clique $C_2$ of at most $\max(t - 1, 2)$ vertices. We repeat this using $R_2, \dots, R_{t}$, until we have only a clique of at most $\max(t - t, 2) = 2$ vertices remaining, which is now clearly even-decomposable, as $G$ has an even number of edges. 
\end{proof}

We are now ready to prove Theorem \ref{e^(n^2)} in the following slightly stronger form.

\begin{thm} \label{p average thm}
    If $\frac{1}{10^6}\leq p\leq 1 - \frac{1}{10^6}$ and $G\sim G(n, p)$, then
    \begin{equation*}
        \mathbb{P}(G\text{ is even-non-decomposable})= e^{-\Omega(n^2)}. 
    \end{equation*}
\end{thm}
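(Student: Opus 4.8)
The plan is to apply Lemma~\ref{lem:deterministic 1/2} with a suitable choice of $t = \Theta(n)$, so that it suffices to show that with probability $1 - e^{-\Omega(n^2)}$, a graph $G \sim G(n,p)$ with $p$ bounded away from $0$ and $1$ simultaneously (a) has no clique of size greater than $t$, and (b) contains at least $t$ vertex-disjoint induced copies of $F$ (the union of two disjoint $K_{5,5}$'s). For (a), the clique number of $G(n,p)$ is concentrated around $2\log_{1/p} n = O(\log n)$, and in fact the probability that $G(n,p)$ contains a clique of size, say, $n/2$ is at most $\binom{n}{n/2} p^{\binom{n/2}{2}} = e^{-\Omega(n^2)}$ since $p < 1$ is bounded away from $1$; so taking $t$ to be any linear function of $n$ (e.g. $t = n/100$), condition (a) fails with probability only $e^{-\Omega(n^2)}$. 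The crux is therefore condition (b).

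For condition (b), the natural approach is to partition (most of) the vertex set of $G$ into $t = \Theta(n)$ blocks, each of some fixed constant size $C$ (where $C \geq 20$, so each block can host a copy of $F$ on $20$ vertices), and argue that each block independently contains an induced copy of $F$ with probability bounded below by some absolute constant $\beta = \beta(p) > 0$. Indeed, for a fixed set $W$ of $20$ vertices and a fixed target graph $F$ on $W$, the probability that $G[W]$ is exactly equal to that copy of $F$ is $p^{e(F)}(1-p)^{\binom{20}{2} - e(F)} \geq \beta_0 > 0$, an absolute constant depending only on the bounds on $p$. Since the blocks are vertex-disjoint, the events ``block $i$ contains an induced copy of $F$'' are mutually independent, each of probability at least $\beta_0$; hence the number of blocks containing such a copy stochastically dominates $\mathrm{Bin}(t, \beta_0)$, and by the Chernoff bound the probability that fewer than $t' := \beta_0 t / 2$ blocks succeed is $e^{-\Omega(t)} = e^{-\Omega(n)}$. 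This already beats the crude $e^{-\Omega(n)}$ but not $e^{-\Omega(n^2)}$; to get the stronger bound we should instead note that the number of \emph{vertex-disjoint} induced copies of $F$ failing to reach $t'$ is itself a ``small'' event in a much stronger sense.

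To obtain the full $e^{-\Omega(n^2)}$ bound for (b), I would argue as follows. Choose $t$ and the block size $C$ so that $t \cdot C \leq n$ and $t$ is, say, $n / 1000$ while the number of available blocks is $n / C \geq 100 t$ (taking $C$ a suitable large constant). Now the bad event ``$G$ contains fewer than $t$ vertex-disjoint induced copies of $F$'' is contained in the event ``at most $t$ of the $n/C$ disjoint blocks contain an induced copy of $F$'', i.e. ``at least $n/C - t \geq (n/C)/2$ blocks fail''. Each block fails independently with probability at most $1 - \beta_0$, so this probability is at most $\binom{n/C}{(n/C)/2} (1-\beta_0)^{(n/C)/2} = e^{-\Omega(n)}$ — still only $e^{-\Omega(n)}$. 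The point where one genuinely gains a quadratic exponent is that we do not need the blocks to be disjoint: within a typical block of size $C$ there are $\binom{C}{20}$ potential locations for $F$, and more importantly one can run the argument at the level of disjoint \emph{pairs} or use a second-moment-free direct counting. Concretely, I would fix an arbitrary partition into $t$ blocks each of size $C = \lfloor n/t \rfloor = \Theta(1)$ and observe that the event ``block $i$ fails to contain an induced $F$'' has probability at most $1 - \beta_0$, and crucially these failures are independent across the $t$ blocks; but to reach $e^{-\Omega(n^2)}$ one instead shows that \emph{conditioning on any outcome of the edges inside and between the first $i-1$ blocks}, block $i$ still contains an induced $F$ with probability $\geq \beta_0$, and then the number of successful blocks dominates $\mathrm{Bin}(t,\beta_0)$; this gives $e^{-\Omega(t)}$, and we finally boost to $e^{-\Omega(n^2)}$ by taking $C$ itself to grow like $\sqrt{n}$ rather than a constant.

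Let me make that last point precise, as it is the main obstacle. Take $C = \lceil \sqrt{n} \rceil$ and $t = \lfloor n / C \rfloor = \Theta(\sqrt n)$ disjoint blocks, each of size $C = \Theta(\sqrt n)$. Inside a single block $B_i$, the probability that $G[B_i]$ contains \emph{no} induced copy of $F$ is extremely small: partition $B_i$ further into $\lfloor C/20 \rfloor = \Theta(\sqrt n)$ sub-blocks of size $20$, and the event that none of these sub-blocks induces a copy of $F$ has probability at most $(1-\beta_0)^{\lfloor C/20\rfloor} = e^{-\Omega(\sqrt n)}$. Since the $t = \Theta(\sqrt n)$ blocks are vertex-disjoint, these failure events are independent, so the probability that \emph{any} block fails is at most $t \cdot e^{-\Omega(\sqrt n)} = e^{-\Omega(\sqrt n)}$ — which is still not quadratic. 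Hence the honestly correct balance is $C = \Theta(n)$ and $t = \Theta(1)$?? — no; rather, one wants $t = \Theta(n)$ blocks. I now see the right formulation: take $t = \lfloor n/30 \rfloor$ disjoint blocks of size $30$; ``block $i$ contains no induced $F$'' has constant probability $\leq 1-\beta_0 < 1$; these events are independent; so the probability that at least $t - t/2$ of them fail is, by Chernoff, $e^{-\Omega(t)} = e^{-\Omega(n)}$. To upgrade $e^{-\Omega(n)}$ to $e^{-\Omega(n^2)}$, the key realization is that the bad event for the whole theorem — $G$ being even-non-decomposable — is, via Lemma~\ref{lem:deterministic 1/2}, contained in (clique $> t$) $\cup$ (fewer than $t$ disjoint induced $F$'s), and the second event, by the independence of the $\binom{n}{2}$ edge indicators, can be bounded by a union bound over the $\leq 2^n$ choices of which blocks fail and, for failing blocks, over the at most $2^{\binom{30}{2}}$ induced subgraphs on them that avoid $F$, giving $2^n \cdot (\text{const})^n \cdot \big(\max_{\text{no }F} p^{e}(1-p)^{\binom{30}{2}-e}\big)^{t}$; since each block-factor is bounded away from $1$ and we are multiplying $t = \Theta(n)$ of them while the union bound costs only $e^{O(n)}$, this is $e^{-\Omega(n)}$, not $e^{-\Omega(n^2)}$. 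I therefore expect the \emph{actual} source of the quadratic exponent in the authors' proof to be a cleverer deterministic condition or a more refined counting — likely using that the \emph{number} of induced copies of $F$ is typically $\Theta(n^{20})$, far more than $t = \Theta(n)$, so that the ``fewer than $t$ disjoint copies'' event requires $\Omega(n^2)$ edge slots to behave atypically; making this rigorous via a counting/entropy argument over pairs of vertices is the step I expect to be hardest, and is where I would focus the technical effort. The clique bound, by contrast, is routine and already gives $e^{-\Omega(n^2)}$ on its own.
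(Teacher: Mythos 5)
The overall framework you propose — apply Lemma~\ref{lem:deterministic 1/2} with $t=\Theta(n)$, bound the clique number, and produce $\Theta(n)$ vertex-disjoint induced copies of $F$ — is exactly the paper's framework, and your clique-number bound is fine (indeed identical). But the heart of the theorem is step (b), and you correctly diagnose that all of your block-partition variants only give $e^{-\Omega(n)}$ or worse, then stop with a speculation rather than an argument. That leaves a genuine gap: you never establish that $G$ has $\Omega(n)$ vertex-disjoint induced copies of $F$ with failure probability $e^{-\Omega(n^2)}$, which is the whole content of the theorem beyond Lemma~\ref{lem:deterministic 1/2}.

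The missing idea is to work with an \emph{edge-disjoint}, rather than vertex-disjoint, family of potential host sets. Take a maximal collection $S_1,\dots,S_k$ of pairwise edge-disjoint $K_{20}$'s in $K_n$; Tur\'an's theorem gives $k=\Omega(n^2)$. Because the $S_i$ are edge-disjoint, the events ``$G[S_i]\cong F$'' are \emph{mutually independent} — each is determined by a disjoint set of edge indicators — and each has probability $q=\Omega(1)$ for $p$ bounded away from $0$ and $1$. Hence the number of indices $i$ with $G[S_i]\cong F$ dominates $\mathrm{Bin}(\Omega(n^2),q)$, and Chernoff gives that with probability $1-e^{-\Omega(n^2)}$ there are $\Omega(n^2)$ induced copies of $F$ any two of which share at most one vertex. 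A greedy maximality argument then extracts $\Omega(n)$ pairwise vertex-disjoint ones from this collection, since any one vertex lies in $O(n)$ of them. Your partitions into vertex-disjoint blocks can only ever yield $O(n)$ independent trials, which is why every variant you tried caps out at $e^{-\Omega(n)}$; your closing speculation (``the number of induced copies is typically $\Theta(n^{20})$\dots via a counting/entropy argument'') is in the right spirit but points toward a much harder route than what is actually needed — edge-disjointness is the clean way to turn $\binom{n}{2}$ independent edge bits into $\Theta(n^2)$ independent block events.
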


\begin{proof}
    By Lemma \ref{lem:deterministic 1/2}, it suffices to prove that there exists some positive integer $t$ such that with probability $1-e^{-\Omega(n^2)}$, $G$ contains at least $t$ vertex-disjoint induced copies of $F$, and $G$ has no clique of size $t$.
    
    We first lower bound the number of vertex-disjoint induced copies of $F$ in $G$. Let $S_1, \dots, S_k$ be a maximal collection of edge-disjoint $K_{20}$s in $K_n$. It is easy to see, e.g. by Tur\'an's theorem, that $k\geq c_1n^2$ for some absolute constant $c_1>0$ (as long as $n\geq 20$). As the $S_i$ are edge-disjoint, the events that (the vertex set of) $S_i$ induces a copy of $F$ in $G$ are independent over all $i\in [k]$, each occurring with probability $q$, where $q=\Omega(1)$ since $10^{-6}\leq p\leq 1-10^{-6}$. Hence, the total number of $S_i$ which induce a copy of $F$ in $G$ dominates $\mathrm{Bin}(c_1n^2,q)$, which, by the Chernoff bound, is at least $c_2n^2$ with probability $1 - e^{-\Omega(n^2)}$, for some absolute constant $c_2>0$. Thus, with this probability there exists a collection $\mathcal{U}$ of $c_2n^2$ induced copies of $F$ in $G$, any two of which share at most one vertex. Take a maximal subcollection $\mathcal{V}\subset \mathcal{U}$ of vertex-disjoint induced copies of $F$. The maximality implies that every induced copy of $F$ in $\mathcal{U}$ shares at least one vertex with some element in $\mathcal{V}$. As one vertex may be shared by at most $\frac{n-1}{19}$ elements of $\mathcal{U}$ (since no two elements of $\mathcal{U}$ share more than one vertex), we have $|\mathcal{V}| \geq \frac{1}{20}\frac{19}{n-1}|\mathcal{U}|\geq c_3n$, for some absolute constant $c_3>0$. Hence with probability $1 - e^{-\Omega(n^2)}$, $G$ contains at least $c_3n$ vertex-disjoint induced copies of $F$.
    
    We upper bound the clique number of $G$. The expected number of copies of $K_{ c_3n}$ in $G$ is $\binom{n}{ c_3n} p^{\binom{ c_3n}{2}} = e^{-\Omega(n^2)}$. Hence, by Markov's inequality, with probability $1 - e^{-\Omega(n^2)}$, there are no copies of $K_{ c_3n}$ in~$G$. Thus, we may take $t=c_3n$ and apply Lemma \ref{lem:deterministic 1/2}.
\end{proof}

\subsection{General case \texorpdfstring{$G(n, p)$}{}}\label{general p section}

In this subsection we prove Theorem \ref{G(n, p)} and Proposition \ref{G(n, p) triv dir}.

Theorem \ref{p average thm} already shows that if $p$ is bounded away from $0$ and $1$, then the probability that $G(n,p)$ is even-non-decomposable is low.
We will need to extend this to the case where $p$ is very small or very large. In these cases, we use the path of three edges $P_3$ in place of the 20-vertex graph $F$. The analogs of Lemma \ref{1step} with $P_3$ in place of $F$ are Lemmas \ref{dense stage 2} and \ref{dense stage 3} in the dense regime, and Lemmas \ref{sparse stage 2} and \ref{sparse stage 3} in the sparse regime.\footnote{We remark that using $P_2$ in these lemmas would also suffice, but it would make certain parts of the argument more complicated.} Note that, compared to Lemma \ref{1step}, these lemmas have a crucial additional assumption that $R$ is either complete to $C$ (to be used in the dense regime) or sends no edges at all to $C$ (to be used in the sparse regime), without which they actually do not hold.  

\begin{lem} \label{dense stage 2}
    Let $G$ be a graph. Let $R$ be the vertex set of an induced copy of $P_3$ in $G$, and let $C$ be a clique consisting of $m\geq 2$ vertices of odd degree in $G$, disjoint from $R$. Assume that $R$ is complete to $C$. Then we may admissibly remove vertices from $C\cup R$ until it forms a clique of at most $m-1$ vertices consisting of odd degree.
\end{lem}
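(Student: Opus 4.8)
\emph{Proof plan.} Write $R=\{r_1,r_2,r_3,r_4\}$ so that the induced $P_3$ consists of the edges $r_1r_2,r_2r_3,r_3r_4$, while $r_1r_3,r_1r_4,r_2r_4$ are non-edges. As throughout, ``degree'' means degree in the current graph, and $\varepsilon_i\in\{0,1\}$ denotes the parity of the current degree of $r_i$. We perform only two kinds of elementary moves, both simple admissible removals: deleting a single vertex of even degree, and deleting a pair of non-adjacent vertices both of odd degree. Note that two vertices of $C$, and a vertex of $C$ together with a vertex of $R$, are always adjacent (as $C$ is a clique complete to $R$), so inside $C\cup R$ the only candidate pairs for the second move come from $\{r_1,r_3\},\{r_1,r_4\},\{r_2,r_4\}$ (or their sub-configurations after earlier deletions). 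The key structural fact is that deleting a vertex of $C$ flips the parity of \emph{every} vertex still present in $C\cup R$; since this move is legal precisely when the common degree-parity of the surviving vertices of $C$ is even, it in particular lets us replace $(\varepsilon_1,\varepsilon_2,\varepsilon_3,\varepsilon_4)$ by its complement at the cost of using up one vertex of $C$.

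The idea is to fix a target set $X\subseteq C\cup R$, delete exactly $X$ using these moves, and be left with the clique $(C\cup R)\setminus X$. Any surviving vertex $u\in(C\cup R)\setminus X$ that lies in $C$ is adjacent to all of $X$, so its final degree equals $\deg_G(u)-|X|$, which is odd iff $|X|$ is even (recall $\deg_G(u)$ is odd). The choice of $X$ depends on the parity of $\varepsilon_1+\varepsilon_2+\varepsilon_3+\varepsilon_4$. If this sum is even, take $X=R\cup\{v_1,v_2\}$ for two arbitrary $v_1,v_2\in C$; then $|X|=6$ and $(C\cup R)\setminus X=C\setminus\{v_1,v_2\}$ is a clique on $m-2\le m-1$ vertices of odd degree. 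If the sum is odd and $m\ge 4$, take $X=R\cup\{v_1,\dots,v_4\}$, so $|X|=8$ and $(C\cup R)\setminus X$ is a clique on $m-4\le m-1$ vertices of odd degree; if the sum is odd and $m=3$, take $X=C\cup R$, leaving the empty clique; and if the sum is odd and $m=2$, delete all of $C\cup R$ except one suitably chosen vertex of $R$ (which of $r_1,\dots,r_4$ to keep being determined by $(\varepsilon_1,\dots,\varepsilon_4)$ so that the retained vertex ends with odd degree). In every case one checks directly that $\sum_{x\in X}\deg_G(x)\equiv e(G[X])\pmod 2$, equivalently $e(G\setminus X)\equiv e(G)\pmod 2$; this is the parity condition that any admissible removal of $X$ must respect (each simple admissible removal preserves the parity of the number of edges), and it is exactly what forces the above case split.

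The substantive part, and the main obstacle, is to verify that in each of these finitely many cases $X$ really can be deleted by a sequence of the two elementary moves. The delicate point is that the vertices of $C$ in $X$ can only be removed one at a time, and only when their common parity is even, while each such removal flips the parity of all remaining vertices of $C$; hence no two deletions of $C$-vertices are consecutive, and each must be flanked by deletions of vertices of $R$ that restore the required parity. One therefore carries out the deletions of $R$ (singly when of even degree, or as a non-adjacent odd pair) interleaved with the deletions of $C$-vertices, and a short case analysis on the initial parity vector $(\varepsilon_1,\varepsilon_2,\varepsilon_3,\varepsilon_4)$ — reduced using the symmetry $r_1\leftrightarrow r_4$, $r_2\leftrightarrow r_3$ of $P_3$ and the fact that a $C$-deletion complements all four $\varepsilon_i$ simultaneously — produces a legal ordering in every case. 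Essentially all of the difficulty lies in this bookkeeping; the conclusion then follows from the degree count above, since all removed vertices lie in $C\cup R$ and each surviving clique vertex loses exactly $|X|$ neighbours.
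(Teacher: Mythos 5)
Your setup and parity accounting are correct: a surviving $C$-vertex drops in degree by exactly $|X|$ when $X\subseteq C\cup R$ is removed (since it is adjacent to all of $C\cup R$), each simple admissible removal preserves $e(G)\bmod 2$, and $\sum_{x\in X}\deg_G(x)\equiv e(G[X])\pmod 2$ is the resulting necessary condition on $X$. The problem is that this is only a necessary condition, and the step you yourself describe as ``the substantive part, and the main obstacle'' --- exhibiting, for each parity vector $(\varepsilon_1,\varepsilon_2,\varepsilon_3,\varepsilon_4)$, a legal interleaving of the two elementary moves that actually deletes $X$ --- is not carried out. The scheduling constraint here is genuinely restrictive: every removal from $C\cup R$ flips the common parity of the surviving $C$-vertices, so a $C$-vertex can be removed as a singleton only when the number of vertices deleted so far is odd, and one must thread the $R$-deletions (singly when even, or as one of the three non-adjacent odd pairs) through these slots. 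Whether a valid order exists is exactly what needs to be checked, and that checking \emph{is} the lemma; the paper's proof consists precisely of ten explicit removal schedules, one per symmetry class of $(\deg a,\deg b,\deg c,\deg d)\bmod 2$.

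You also make the task harder than necessary. By demanding that the surviving $C$-vertices already have odd degree, you force $|X|$ to be even, which drives the split on the parity of $\varepsilon_1+\cdots+\varepsilon_4$, the use of four $C$-vertices in the odd branch, and the special treatment of $m\in\{2,3\}$. The paper's schedules instead remove exactly two $C$-vertices together with three or four vertices of $R$ in every case (so at most $m-1$ vertices of $C\cup R$ survive, with no constraint on their parities), and then invoke the greedy Lemma~\ref{greedy} on the survivors; the greedy step costs nothing and automatically yields a clique of odd-degree vertices of size at most $m-1$. Also, treating ``a $C$-deletion complements all $\varepsilon_i$'' as a case-reduction tool is slightly misleading: it is a move available only at times when the $C$-parity is even and it spends a vertex of $C$, so it does not collapse initial parity classes in the way a genuine symmetry would. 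Concretely, what is missing is the list of ten orderings (or an argument that one always exists); without it the proof is an outline, not a proof.
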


\begin{proof}
    Let $a,b,c,d$ be the vertices of the induced copy of $P_3$ in $G[R]$, in the natural order. We prove the lemma by considering each of the 10 cases (by symmetry) of the parity of the degrees of these four vertices. Below, we use $e$ or $o$ to denote when the degree of a vertex in $(a, b, c, d)$ is even or odd, respectively. (So, for example, $(e,e,o,e)$ means that $a,b,d$ have even degree and $c$ has odd degree.) We use $k_1,k_2$ to denote arbitrary distinct elements of $C$.
    \begin{itemize}
        \item $(e, e, e, e)$: Remove $b, k_1, c, k_2, \{a, d\}$.
        \item $(e, e, e, o)$: Remove $b, k_1, c, k_2, d$. 
        \item $(e, e, o, e)$: Remove $b, k_1, a, k_2, c$. 
        \item $(e, e, o, o)$: Remove $b, k_1, a, k_2, c$. 
        \item $(o, o, o, o)$: Remove $\{a, d\}, b, k_1, c, k_2$. 
        \item $(o, o, o, e)$: Remove $d, k_1, b, k_2, a$. 
        \item $(o, o, e, o)$: Remove $c, k_1, a, k_2, d$. 
        \item $(o, e, o, e)$: Remove $d, k_1, a, k_2, c$.
        \item $(o, e, e, o)$: Remove $b, k_1, c, k_2, a$.
        \item $(e, o, o, e)$: Remove $a, k_1, c, k_2, \{b, d\}$.
    \end{itemize}
    For example, in the first case above, $a, b, c, d$ initially have even degree, and all vertices in $C$ have odd degree. We admissibly remove $\{b\}$, after which $a, c$ have odd degree, $d$ has even degree, and all vertices in $C$ have even degree. We admissibly remove $\{k_1\}$ for some $k_1\in C$, after which $a, c$ have even degree, $d$ has odd degree, and all vertices in $C$ have odd degree. We admissibly remove $\{c\}$, after which $a, d$ have even degree, and all vertices in $C$ have even degree. We admissibly remove $\{k_2\}$ for some $k_2\in C$, after which $a, d$ have odd degree, and all vertices in $C$ have odd degree. And we finish by admissibly removing $\{a, d\}$.

    In each of the $10$ cases, we have at most $m-1$ vertices remaining in $C\cup R$ after the removals. Using further removals if necessary, applying the greedy algorithm (Lemma \ref{greedy}), we can achieve that the remaining vertices in $C\cup R$ form a clique consisting of at most $m-1$ vertices of odd degree. 
\end{proof}

\begin{lem} \label{dense stage 3}
    Let $G$ be a graph. Let $R$ be the vertex set of an induced copy of $P_3$ in $G$, and let $C$ be a clique consisting of at most two vertices of odd degree in $G$, disjoint from $R$. Assume that $R$ is complete to $C$. Then we may admissibly remove vertices from $C\cup R$ such that the entire set $C$ is removed and the remainder of $R$ forms a clique consisting of at most two vertices of odd degree. 
\end{lem}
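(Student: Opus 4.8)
The plan is to reduce the statement to a single claim: \emph{we may admissibly remove all of the (at most two) vertices of $C$, using only vertices of $C\cup R$}. This suffices, because once $C$ has been entirely removed, the remaining vertices of $R$ induce a subgraph of $P_3$, which has clique number at most two; applying Lemma \ref{greedy} to (the remainder of) $R$ then lets us admissibly remove further vertices of $R$ until the remaining vertices of $R$ form a clique, all of odd degree, and this clique has at most two vertices, exactly as required. So I only need to prove the claim, and I would do this by cases on $|C|\in\{0,1,2\}$. Write $a,b,c,d$ for the vertices of the induced $P_3$ in path order (edges $ab,bc,cd$), recall that $R$ is complete to $C$, and recall that an admissible removal of a set $S$ flips the parity of the degree of exactly those vertices having an odd number of neighbours in $S$. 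The case $|C|=0$ is vacuous.

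If $|C|=1$, say $C=\{k\}$, then $k$ has odd degree and is complete to $R$. If some $x\in R$ has even degree, I remove $\{x\}$ (a simple admissible removal which flips the parity of $k$) and then $\{k\}$, which now has even degree. Otherwise all of $a,b,c,d$ have odd degree: I remove the non-adjacent odd pair $\{a,d\}$, which flips $b,c$ to even degree and (since $k$ is adjacent to both $a$ and $d$) leaves $k$ of odd degree; then I remove $\{b\}$, flipping the parity of $k$; then I remove $\{k\}$. If $|C|=2$, say $C=\{k_1,k_2\}$ (adjacent, both of odd degree, both complete to $R$), I use the symmetry $(a,b,c,d)\mapsto(d,c,b,a)$ of $P_3$ and split into three cases. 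If an interior vertex, say $b$, has even degree, I remove $\{b\}$ (making $k_1,k_2$ of even degree) and then $\{k_1\}$ (making $k_2$ of odd degree again); now $k_2$ is complete to $\{a,c,d\}$, where $a$ is isolated and $cd$ is an edge, and I finish this configuration just as in the $|C|=1$ argument, using the non-adjacent pair $\{a,c\}$ and then $\{d\}$ in the sub-case where $a,c,d$ happen to all have odd degree. If no interior vertex has even degree but an endpoint, say $a$, does (so $b,c$ have odd degree), I remove $\{a\}$ (making $b,k_1,k_2$ of even degree) and then $\{k_1\}$; this makes $c$ of even degree, so I remove $\{c\}$ (making $k_2$ of even degree) and then $\{k_2\}$. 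Finally, if all of $a,b,c,d$ have odd degree, I remove $\{a,d\}$ (making $b,c$ of even degree, leaving $k_1,k_2$ of odd degree), then $\{b\}$ (making $c$ of odd degree, $k_1,k_2$ of even degree), then $\{k_1\}$ (making $c$ of even degree, $k_2$ of odd degree), then $\{c\}$ (making $k_2$ of even degree), then $\{k_2\}$. In each case $C$ is entirely removed, which proves the claim.

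The routine part will be verifying the parity bookkeeping along these interleaved removals, which I would lay out case by case much as in the proof of Lemma \ref{dense stage 2}. The genuinely delicate point — and the reason the choice of which vertex of $R$ to delete first matters — is that a single odd-degree vertex complete to a copy of $P_2$ all three of whose vertices have odd degree is a \emph{stuck} configuration: the only available removal is of the two endpoints of the $P_2$, and this flips no parities at all (each endpoint is adjacent to the same middle vertex, and both are adjacent to the extra vertex), leaving an odd-degree vertex adjacent to an odd-degree vertex with no legal move. The case analysis above is arranged precisely so that this configuration is never reached — in particular, when $|C|=2$ one must prefer deleting an interior vertex of $P_3$, and fall back on the $\{a,d\}$ move only when all of $R$ has odd degree — and this is exactly where the hypothesis that $R$ induces $P_3$ rather than $P_2$ is used.
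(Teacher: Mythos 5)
Your proof is correct, and it takes a genuinely different route from the paper's. The paper disposes of this lemma in a single line: it simply reruns the ten removal sequences listed in the proof of Lemma~\ref{dense stage 2}, truncating each sequence at the point when all of $C$ has been removed (which works because, at every stage, all remaining vertices of $C$ share a common parity, so the truncated prefix of each verified sequence remains admissible regardless of whether $|C|$ is $0$, $1$ or $2$). You instead re-derive removal sequences from scratch, organizing the case split by $|C|$ and then by which vertex of $R$ has even degree, and packaging the argument as a reduction to the single claim that all of $C$ can be admissibly removed using only $C\cup R$. Both arguments are elementary parity bookkeeping; the paper's is shorter because it inherits the verification from Lemma~\ref{dense stage 2}, while yours is self-contained and arguably more illuminating. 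In particular, your closing observation --- that an odd-degree vertex complete to an all-odd $P_2$ is stuck (the endpoints may be removed as a pair, but this flips no parities), and that this is precisely why one should prefer to delete an interior vertex of $P_3$ first --- is a genuinely useful piece of intuition that the paper does not spell out (and which sits in mild tension with the paper's casual footnote claiming $P_2$ would also suffice). I checked the parity calculations in each of your cases and they are correct; the final step of applying Lemma~\ref{greedy} to the remainder of $R$ is also correctly justified, since whatever remains of $R$ induces a subgraph of $P_3$ and hence has clique number at most two.
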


\begin{proof}
    We consider again each of the 10 cases (by symmetry) of the parity of the degrees of the vertices in $R$, and follow the same admissible removal sequence as in Lemma \ref{dense stage 2} until all vertices in $C$ have been removed.

    Once all vertices in $C$ have been removed, we may apply the greedy algorithm (Lemma \ref{greedy}) to the remainder of $R$ to ensure that a clique consisting of at most two vertices of odd degree remains, as the largest clique in $P_3$ has two vertices.     
\end{proof}

The next two lemmas are the counterparts of Lemma \ref{dense stage 2} and Lemma \ref{dense stage 3} in the sparse setting.

\begin{lem} \label{sparse stage 2}
    Let $G$ be a graph. Let $R$ be the vertex set of an induced copy of $P_3$ in $G$, and let $C$ be a clique consisting of $m\geq 2$ vertices of odd degree in $G$, disjoint from $R$. Assume that $R$ sends no edges to $C$. Then we may admissibly remove vertices from $C\cup R$ until it forms a clique consisting of at most $\max(m-1, 2)$ vertices of odd degree. 
\end{lem}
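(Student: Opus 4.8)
The plan is to follow the structure of the proof of Lemma~\ref{dense stage 2}, carrying out a case analysis on the parities of the degrees of the four vertices $a,b,c,d$ of the induced $P_3$ spanned by $R$ (ten cases up to the symmetry that reverses the path), but the admissible removals will be of a different nature: since $R$ sends no edges to $C$, deleting vertices of $R$ no longer flips any parities in $C$, so we cannot use the coupling exploited in the dense case.

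The key building block is a move I will call a \emph{$C$-reduction}. Suppose that at some stage a vertex $r$ of the current remnant of $R$ has odd degree and $|C|\ge 2$. Because $R$ sends no edges to $C$, the pair $\{r,k_1\}$ is non-adjacent for every $k_1\in C$, so (picking $k_1$ of odd degree) we may simply admissibly remove $\{r,k_1\}$; this flips the parities of the at most two neighbours of $r$ in $R$ and of every vertex of $C$ other than $k_1$, so afterwards all remaining vertices of $C$ have even degree. We then remove a further vertex $k_2\in C$, which now has even degree, flipping the parities of $C\setminus\{k_1,k_2\}$ back to odd. The net effect is that $r$ is deleted from $R$ (with its $R$-neighbours' parities flipped), two vertices are removed from $C$, and the remaining $m-2$ vertices of $C$ again all have odd degree. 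Thus, as far as $R$ is concerned, a $C$-reduction behaves exactly like an otherwise-illegal removal of a single odd-degree vertex of $R$, at the price of shrinking $C$ by two.

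With this in hand, the strategy is: clear all of $R$ using simple admissible removals within $R$ together with exactly $t$ $C$-reductions, where $t=1$ when an even number of $a,b,c,d$ have odd degree, and $t=2$ otherwise. (That $t=1$ cannot work in the second case, and $t=2$ cannot work in the first, follows from the global invariant that a simple admissible removal preserves the parity of $e(G)$; that the stated value of $t$ \emph{does} work is what the case analysis establishes.) For each of the ten parity patterns one writes down an explicit removal sequence: first, if the current $R$ has no odd-degree vertex, or if a naive choice of $C$-reduction would disconnect $R$ and strand an isolated odd-degree vertex that could no longer be removed, one performs a preliminary simple admissible removal within $R$; then one performs the $t$ $C$-reductions, in the roles of removals of odd-degree vertices of $R$; and finally one clears the remaining two or three vertices of $R$ by simple admissible removals among themselves, appealing to Lemma~\ref{greedy} if convenient. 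The result is that $R$ becomes empty and $C$ becomes a clique of $m-2t\le m-1$ vertices of odd degree, which is the desired conclusion.

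Two boundary situations are dealt with separately and easily. If $m=2$, one $C$-reduction already empties $C$, after which Lemma~\ref{greedy} applied to the remainder of $R$ leaves a clique of at most $2=\max(m-1,2)$ vertices of odd degree. If $m=3$ and an odd number of $a,b,c,d$ have odd degree, then $t=2$ would ask us to remove four vertices from a clique of size three; instead we perform one full $C$-reduction, bringing $C$ down to a single odd-degree vertex, then remove the pair consisting of that last vertex of $C$ and an odd-degree vertex of $R$ (which serves as the second "$C$-reduction" for $R$), and then clear the rest of $R$, finishing with the empty clique. The main obstacle — and the only place where genuine care is required — is the case-by-case verification that, after the prescribed $C$-reductions, the remnant of $R$ can actually be cleared by simple admissible removals within itself; this is routine, but, as indicated above, a carelessly chosen $C$-reduction can leave $R$ in an unsalvageable configuration, so the order of the removals must be chosen deliberately in each case.
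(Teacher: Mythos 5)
Your approach is more elaborate than the paper's and, as written, has a genuine gap. The paper's proof does not attempt to clear $R$ at all: it observes that since $R$ sends no edges to $C$, any clique in the remnant of $C\cup R$ lies entirely in one of the two sets, so as soon as at least one vertex of $C$ has been admissibly deleted, Lemma~\ref{greedy} applied to $C\cup R$ automatically finishes the job with a clique of size at most $\max(m-1,2)$. Concretely: if some $t\in R$ has odd degree, remove $\{t,k\}$ with $k\in C$ (non-adjacent since $R$ sends no edges to $C$) and then greedily reduce; if all of $a,b,c,d$ have even degree, remove $\{a,c\}$ (making $d$ odd), then $\{d,k_1\}$, then $k_2$, and greedily reduce. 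No ten-way case split is needed, and $m=2,3$ require no special treatment.

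The gap in your proposal is precisely the part you flag as ``routine'': you never actually exhibit the removal sequence for each of the ten parity patterns of $(a,b,c,d)$, and you yourself note that a carelessly chosen $C$-reduction can strand an isolated odd-degree vertex of $R$. That verification, together with the separate handling of $m=2$ and $m=3$ and the unproved claim that the correct number of $C$-reductions is governed by the parity of the number of odd-degree vertices of $R$ (``follows from the global invariant'' is asserted, not argued), is the entire content of the lemma, so the proposal as it stands is incomplete rather than merely long. The missing idea that would have simplified everything is that you do not need to empty $R$: since the $R$-side and the $C$-side are disconnected, you only need to shrink $C$ by at least one vertex before handing the rest to Lemma~\ref{greedy}.
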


\begin{proof}
    If there is a vertex $t\in R$ of odd degree, admissibly remove $\{t, k\}$, where $k\in C$. Now there are no cliques of size greater than $\max(m-1, 2)$ remaining in $C\cup R$ and greedily removing (see Lemma \ref{greedy}) from $C\cup R$ suffices. 
    
    Else, let $a,b,c,d$ be the vertices of the induced copy of $P_3$ in $G[R]$, in the natural order, all of which have even degree. We may admissibly remove $\{a, c\}, \{d, k_1\}, k_2$, where $k_1, k_2\in C$. $m - 1$ vertices now remain in $C\cup R$, and greedily removing (see Lemma \ref{greedy}) from $C\cup R$ suffices. 
\end{proof}

\begin{lem} \label{sparse stage 3}
    Let $G$ be a graph. Let $R$ be the vertex set of an induced copy of $P_3$ in $G$, and let $C$ be a clique consisting of at most two vertices of odd degree in $G$, disjoint from $R$. Assume that $R$ sends no edges to $C$. Then we may admissibly remove vertices from $C\cup R$ such that the entire set $C$ is removed and the remainder of $R$ forms a clique of at most two vertices of odd degree. 
\end{lem}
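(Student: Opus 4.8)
The plan is to mimic the proof of Lemma~\ref{sparse stage 2}, performing the same admissible removals but halting as soon as the clique $C$ has been emptied, exactly as Lemma~\ref{dense stage 3} was deduced from the recipe of Lemma~\ref{dense stage 2}. The key structural feature in the sparse regime is that $R$ sends no edges to $C$, so deleting vertices on one side never changes any degree on the other side; the only interaction between the two sides is that an odd-degree vertex of $C$ cannot be removed on its own, but may be removed together with any non-adjacent odd-degree vertex, and every vertex of $R$ is non-adjacent to $C$. Since $|C|\le 2$, it therefore suffices to produce at most two odd-degree vertices in $R$ (or to turn vertices of $C$ even) in order to clear $C$, after which we clean up $R$ greedily.

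Concretely, write $a,b,c,d$ for the vertices of the induced $P_3$ in $G[R]$ in path order, and split into two cases according to whether $R$ currently contains a vertex of odd degree. If it does, say $t\in R$ has odd degree, and write $C=\{k_1\}$ or $C=\{k_1,k_2\}$ (the case $C=\emptyset$ being vacuous here): then $\{t,k_1\}$ is an independent pair of odd-degree vertices sending an even number of edges to the rest of the graph, so it is simple admissible; after removing it, $k_2$ (if present) has had its degree flipped and is now of even degree, so $\{k_2\}$ can be removed on its own, and $C$ is then empty. If instead every vertex of $R$ has even degree, first remove the independent pair $\{a,c\}$ (simple admissible, since $d(a)+d(c)$ is even): this decreases $d(b)$ by two, keeping $b$ of even degree, and decreases $d(d)$ by one, making $d$ of odd degree, so we have reduced to the previous case with $t=d$. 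If $C=\emptyset$ throughout, there is nothing to remove from $C$ and we go straight to the last step.

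Finally, apply the Greedy Admissible Removal lemma (Lemma~\ref{greedy}) to what remains of $R$; it leaves a clique all of whose vertices have odd degree, and since every clique contained in $P_3$ has at most two vertices, the remainder of $R$ is a clique on at most two vertices of odd degree, as required, while $C$ has already been removed in full.

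I expect essentially no obstacle: the sparse setting is easy precisely because $R$ and $C$ are non-adjacent, so — unlike the dense analogue (Lemma~\ref{dense stage 3}) — no ten-case analysis of the parities on $R$ is needed. The only points requiring care are the routine parity bookkeeping (checking that each removed pair is independent and sends an even number of edges to the rest of the graph, using that $R$ sends no edges to $C$, and tracking the parity flip of the second vertex of $C$ after the first is deleted) and the small degenerate cases $|C|\in\{0,1\}$, all of which are immediate.
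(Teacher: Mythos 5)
Your proof is correct and follows essentially the same approach as the paper's: in the odd-degree case, remove $\{t,k_1\}$ then $k_2$; in the all-even case, first remove $\{a,c\}$ to make $d$ odd, then proceed as before; finally apply Lemma~\ref{greedy} to the remainder of $R$. The paper phrases the second case as the single sequence $\{a,c\},\{d,k_1\},k_2$ rather than as a reduction to the first case, but the content and the parity bookkeeping are identical.
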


\begin{proof}
    Once $C$ is removed, we may apply the greedy algorithm (Lemma \ref{greedy}) to the remainder of $R$ to ensure that a clique of at most two vertices of odd degree remains, as the largest clique in $P_3$ has two vertices. 

    To remove $C$, if any $t\in R$ has odd degree, we may admissibly remove $\{t, k_1\}, k_2$, where $k_1, k_2\in C$, stopping at any stage when all vertices in $C$ have already been removed. 

    Else, let $a,b,c,d$ be the vertices of the induced copy of $P_3$ in $G[R]$, in the natural order, all of which have even degrees. We may remove $C$ by admissibly removing $\{a, c\}, \{d, k_1\}, k_2$, where $k_1, k_2\in C$, again stopping at any stage when all vertices in $C$ have been removed. 
\end{proof}

\begin{lem} \label{lem:deterministic dense}
    Let $G$ be a graph on $n$ vertices with an even number of edges. Assume that $G$ contains at least $\frac{n}{100}$ vertex-disjoint induced copies of $P_3$, $G$ has no copies of $K_{\frac{n}{200}}$, and all vertices have degree at most $\frac{n}{10^5}$ in the complement $\overline{G}$. Then $G$ is even-decomposable.
\end{lem}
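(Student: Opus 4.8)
The plan is to run the same scheme as in the proof of Lemma~\ref{lem:deterministic 1/2}, with a copy of $P_3$ playing the role of the gadget $F$ and with Lemmas~\ref{dense stage 2} and~\ref{dense stage 3} in place of Lemma~\ref{1step}. First I would fix $s\ge n/100$ vertex-disjoint induced copies of $P_3$, on vertex sets $R_1,\dots,R_s$, and set $W=V(G)\setminus\bigcup_{i}R_i$. Applying Lemma~\ref{greedy} to $W$ (deleting only vertices of $W$) leaves a clique $C\subseteq W$ all of whose vertices have odd degree in the current graph $C\cup\bigcup_i R_i$; the hypothesis that $G$ has no $K_{n/200}$ gives $|C|<n/200$, and since $e(G)$ is even and every deletion so far is admissible, the current graph still has an even number of edges.

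The second stage destroys $C$. While $|C|\ge 3$ I would select an unused $R_i$ that is complete to the current clique $C$ and apply Lemma~\ref{dense stage 2}, which replaces $C\cup R_i$ by a clique of odd-degree vertices of size at most $|C|-1$; once $|C|\le 2$, one further unused $R_i$ together with Lemma~\ref{dense stage 3} deletes $C$ entirely. At most $|C|<n/200$ of the copies are consumed, so at least $n/100-n/200=n/200$ of them survive untouched; the graph left over is an induced subgraph of $G$ with an even number of edges, made up of these surviving induced copies of $P_3$ together with a bounded-size remnant from the last absorption, and one then finishes it off by iterating the same procedure on this smaller graph (checking that the three hypotheses persist, with somewhat weaker constants, which already requires a little bookkeeping). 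Since the vertex set strictly shrinks at each pass and parity forces the process to end with nothing, this produces an even-decomposition of $G$.

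The heart of the matter, and where I expect the real work to lie, is justifying that at every step of the second stage one can actually find an unused $R_i$ complete to the current clique. In a graph as dense as $G$ a clique of size up to $n/200$ may have a complement-neighbourhood covering almost all of $V(G)$, so it need not be true that some reserved $P_3$ is complete to the whole of $C$. What can be exploited is the extreme sparsity of $\overline G$: since $\Delta(\overline G)\le n/10^5$, each $R_i$ has at most $4n/10^5<n/200$ vertices with a non-neighbour in $C$, so $R_i$ fails to be complete to $C$ only on account of a small ``bad'' set $B_i\subseteq C$. Once $|C|$ has been brought below a fixed multiple of $10^5$, a direct union bound over $C$ shows that only few of the surviving copies are spoiled, so a complete one exists. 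To reach that regime starting from $|C|$ as large as $n/200$ one must first peel $C$ down using copies of $P_3$ that are complete merely to $C\setminus B_i$, separately tidying up the $O(n/10^5)$ vertices of $B_i$ whose degree parity gets disturbed in the process (each such vertex can, for example, be paired with a suitable leftover vertex of a spoiled $P_3$ so as not to disturb the rest of the clique); organising this peeling so that the reserve of $P_3$'s never runs out, and so that the parity disturbances do not cascade uncontrollably through the very dense clique, is the delicate part of the argument.
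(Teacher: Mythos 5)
Your Stage 1 matches the paper, but Stage 2 and the finishing step both have genuine gaps.

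For Stage 2 you look for an unused $R_i$ that is complete to the \emph{whole} current clique $C$, which can have up to $n/200$ vertices, and you correctly observe that such an $R_i$ need not exist. The peeling scheme you then sketch (tolerating a bad set $B_i\subseteq C$ and separately repairing parity disturbances) is not carried out, and you yourself flag it as the delicate unresolved part. The paper avoids this entirely with a much simpler device: it never tries to absorb the whole clique at once. It picks an arbitrary $3$-element subset $C'\subseteq C_{i-1}$. Since each vertex of $C'$ has at most $n/10^5$ non-neighbours, only at most $3n/10^5$ of the reserved $P_3$'s are blocked, and as at least $n/200>3n/10^5$ remain, some $R_i$ is complete to $C'$. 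Lemma~\ref{dense stage 2} applied to $R_i$ and $C'$ (with $m=3$) leaves a clique of size at most $2$ inside $R_i\cup C'$, and a further greedy pass on that together with $C_{i-1}\setminus C'$ produces a clique $C_i$ of size $\le |C_{i-1}|-1$. So the union-bound argument you were hoping for in the ``small clique'' regime already works for every step, because the lemma only needs completeness to three vertices, not to all of $C$.

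The finishing step is also wrong as stated. You propose to iterate the whole lemma on the leftover graph of $\sim n/25$ vertices (the $\ge n/200$ surviving $P_3$'s plus a size-$\le 2$ remnant). But the hypotheses do not persist with comparable constants: the complement-degree bound is the fixed number $n/10^5$, which relative to an $n/25$-vertex graph is $25$ times weaker, and the bound $|\mathcal H_{i}|\geq n/200 > 3n/10^5$ that drives Stage 2 has no slack to absorb such a loss on a second pass. The paper instead handles the residual differently: it builds an auxiliary graph $L$ on the surviving $P_3$'s together with the remnant $D$, joining two nodes when the corresponding vertex sets are pairwise complete. The degree bound on $\overline G$ makes $L$ have minimum degree greater than $v(L)/2$, so Dirac's theorem gives a Hamilton cycle, and one then applies Lemma~\ref{dense stage 3} along the cycle, passing the small leftover from one $P_3$ to the next until at most two vertices remain. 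You should replace the ad hoc recursion with this (or some other) explicit argument that eliminates all the reserved $P_3$'s.
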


\begin{proof}
    Let $\mathcal{H}$ be a collection of $n/100$ vertex-disjoint induced copies of $P_3$ in $G$. 

    We even decompose $G$ in three stages. 

    \begin{itemize}
        \item Stage 1: Perform greedy admissible removals (Lemma \ref{greedy}) on $V(G)\backslash \bigcup_{H\in \mathcal{H}} V(H)$. Let us denote by $C_0$ the remaining clique, which consists of vertices of odd degree. Then $|C_0|\leq\frac{n}{200}$ since $G$ has no clique of size $n/200$.
        
        \item Stage 2: Let $\mathcal{H}_0=\mathcal{H}$. We apply admissible removals so that after $i\leq n/200$ steps, the remaining vertex set is the disjoint union of $\bigcup_{H\in \mathcal{H}_i} V(H)$ and $C_i$, where  $\mathcal{H}_i\subseteq \mathcal{H}_0$ has size $|\mathcal{H}_0|-i$, and $C_i$ is a clique of size at most $|C_0|-i$ which consists of vertices of odd degree.

        If for some $i$, we have $|C_{i-1}|\leq 2$, move onto stage 3. Else, choose some $C\subseteq C_{i-1}$ with $|C| = 3$. Vertices from $C$ send non-edges to the vertex set of at most $\frac{3n}{10^5}$ many $H\in \mathcal{H}_{i-1}$ by the assumption on the degrees in $\overline{G}$. Hence, as $|\mathcal{H}_{i-1}|=|\mathcal{H}_0|-(i-1)\geq n/100-n/200= n/200$, we may choose some $H\in \mathcal{H}_{i-1}$ whose vertex set $R$ is complete to $C$. Now apply Lemma \ref{dense stage 2} to $R$ and $C$, and let the clique within $R\cup C$ obtained after the admissible removal described by the lemma be $C'_{i}$. Note that $|C'_{i}|\leq 2$. Applying greedy admissible removals (Lemma~\ref{greedy}) to the set $C_{i}'\cup (C_{i-1}\setminus C)$, we obtain a clique $C_{i}$ consisting of vertices of odd degree. Let $\mathcal{H}_{i}=\mathcal{H}_{i-1}\setminus \{H\}$. Clearly, we have $|\mathcal{H}_{i}|=|\mathcal{H}_{i-1}|-1=|\mathcal{H}_0|-i$ and $|C_{i}|=|C_{i}'|+|C_{i-1}|-|C|\leq |C_{i-1}|-1\leq |C_0|-i$, and the remaining vertex set is the disjoint union of $\bigcup_{H\in \mathcal{H}_i} V(H)$ and $C_i$, as desired.

        Since $|C_i|\leq |C_0|-i\leq n/200-i$, Stage 2 must end within at most $n/200$ steps.

        \item Stage 3: We have shown that we can admissibly remove vertices from $G$ such that the remaining vertex set is the disjoint union of $\bigcup_{J\in \mathcal{J}} V(J)$ and $D$, where $\mathcal{J}\subset \mathcal{H}$ has size at least $n/200$, and $D$ is a clique of size at most two consisting of vertices of odd degree.

        Let $L$ be a graph with vertex set $\mathcal{J}\cup\{D\}$ in which there is an edge between $J\in \mathcal{J}$ and $J'\in \mathcal{J}$ if and only if $V(J)$ is complete to $V(J')$, and there is an edge between $J\in \mathcal{J}$ and $D$ if and only if $V(J)$ is complete to $D$. By the degree bound (for $\overline{G}$), we have $\deg_L(D) \geq |\mathcal{J}|-\frac{2n}{10^5}> v(L)/2$, and $\deg_L(J) \geq |\mathcal{J}|-\frac{4n}{10^5}> v(L)/2$ for every $J\in \mathcal{J}$. Hence, by Dirac's theorem, there is a Hamilton cycle in $L$. Let us denote the vertices of this cycle as $D, J_1, J_2, \dots, J_{|\mathcal{J}|}$ in the natural order. 

        Since there is an edge between $J_1$ and $D$ in $L$, $V(J_1)$ is complete to $D$ and hence we can apply Lemma \ref{dense stage 3} to $V(J_1)$ and $D$ (in place of $R$ and $C$). Let $D'$ be the set of remaining vertices in $V(J_1)$. Since there is an edge between $J_2$ and $J_1$ in $L$, $V(J_2)$ is complete to $V(J_1)$ and hence also to $D'$, we can apply Lemma \ref{dense stage 3} to $V(J_2)$ and $D'$ (in place of $R$ and $C$). We repeat this with $J_3, \dots, J_{|\mathcal{J}|}$ in order. At the end of this procedure, we have at most two vertices left in the graph, and therefore the remaining graph is clearly even-decomposable. \qedhere
    \end{itemize}
\end{proof}

We now prove that, for a very wide range of values of $p$, it is likely that $G(n,p)$ contains at least $n/100$ vertex-disjoint induced copies of $P_3$. 

\begin{lem} \label{no notnecind copy of P_3}
    If a graph $G$ does not contain any (not necessarily induced) copies of $P_3$, then $e(G)\leq v(G)$. 
\end{lem}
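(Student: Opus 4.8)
The plan is to reduce to connected graphs and then classify the connected $P_3$-free graphs completely. Since both $e(G)$ and $v(G)$ are additive over the connected components of $G$, it suffices to prove that every connected graph $H$ containing no (not necessarily induced) copy of $P_3$ satisfies $e(H)\le v(H)$; summing this inequality over the components of $G$ then yields the statement. (This is essentially the $P_4$-case of the Erdős--Gallai theorem on paths, but it is short enough to include a direct argument.)

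So fix a connected $P_3$-free graph $H$ and let $P=v_0v_1\cdots v_\ell$ be a longest path in $H$. Since $H$ contains no path with three edges, $\ell\le 2$. If $\ell\le 1$, then a short connectivity argument (any connected graph on at least three vertices contains a path with two edges, e.g.\ via a longest path in a spanning tree) shows that $H$ has at most two vertices, so $e(H)\le 1\le v(H)$. The substantive case is $\ell=2$, say $P=v_0v_1v_2$.

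In that case I would first observe that no vertex $w\notin\{v_0,v_1,v_2\}$ can be adjacent to $v_0$ or $v_2$, since $wv_0v_1v_2$ (respectively $v_0v_1v_2w$) would be a path with three edges. Next, taking an arbitrary vertex $w$ and a shortest path from $w$ to $\{v_0,v_1,v_2\}$, I would argue that this path has length at most $1$: if it had length at least $2$ and ended at $v_i$, one could extend it by whichever of the edges $v_0v_1$ or $v_1v_2$ does not backtrack to obtain a path with at least three edges, a contradiction. Hence every vertex of $H$ lies in $\{v_0,v_1,v_2\}$ or is adjacent to one of them, and by the first observation it must then be adjacent to $v_1$; so $v_1$ is adjacent to every other vertex of $H$. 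Finally, if $H$ contained an edge $xy$ with $x,y\neq v_1$, then $x,y,v_1$ would form a triangle, and any further vertex $z$ (necessarily adjacent to $v_1$) would give the three-edge path $zv_1xy$; so in this case $H=K_3$ and $e(H)=v(H)=3$. If no such edge exists, then $H$ is a star, so $e(H)=v(H)-1<v(H)$. Either way $e(H)\le v(H)$, which completes the plan.

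I do not expect a genuine obstacle here: the only thing that needs care is the case analysis classifying connected $P_3$-free graphs as stars and triangles, and in particular making sure that in the $\ell=2$ case every shortest/longest-path extension really does produce a three-edge path in each subcase. Everything else is bookkeeping and the component-wise summation.
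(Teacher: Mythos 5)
Your proof is correct, and it follows the same overall strategy as the paper (reduce to connected components, then classify the components), but the mechanism of the classification is different. The paper argues via cycles: if a component contains a cycle, that cycle must be a triangle, and nothing else can be attached to it without creating a $P_3$, so every component is a tree or a $C_3$, and both satisfy $e\le v$. You instead argue via longest paths, obtaining the sharper structural conclusion that every connected $P_3$-free graph is a star, a $K_3$, or a graph on at most two vertices. Both are valid; the cycle argument is a bit more economical (it only needs to rule out non-tree non-$C_3$ components, not describe them), while your approach yields a complete classification as a byproduct. Your case analysis is sound — in particular, in the $\ell=2$ case your first observation already rules out shortest-path endpoints $v_0,v_2$, so only $i=1$ needs the extension argument — and the component-wise summation is exactly as in the paper.
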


\begin{proof}
    It suffices to prove that every connected component of $G$ is acyclic (a tree) or a copy of $C_3$. 

    Assume that some connected component $\mathcal{C}$ is not a tree, and not a copy of $C_3$. Let $C_k$ be a cycle of $k$ vertices in $\mathcal{C}$. If $k > 3$, then we can find a copy of $P_3$ in $C_k$. And if $k = 3$, let the cycle be $uvw$. Since $\mathcal{C}\neq \{u, v, w\}$ and $\mathcal{C}$ is connected, some vertex in $\{u, v, w\}$, say $u$, has an edge $ut$ to some $t\notin \{u, v, w\}$. Then $tuvw$ is a copy of $P_3$, a contradiction. 
\end{proof}

\begin{lem} \label{exact}
    If $p = \frac{100}{n}$ and $G\sim G(n, p)$, then with probability $1 - e^{-\Omega(n)}$, $G$ has at least $\frac{n}{10}$ vertex-disjoint induced copies of $P_3$. 
\end{lem}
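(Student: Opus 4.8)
I would assemble the induced copies of $P_3$ from pairs of edges of a large matching in $G$, joined by one additional edge, and reveal $G$ in two rounds so that the joining edges form fresh randomness independent of the chosen matching. Concretely, write $p=100/n$ and $p_1=80/n$, and reveal $G\sim G(n,p)$ so that $G_1\sim G(n,p_1)$ is exposed first; then, conditioned on $G_1$, each pair outside $E(G_1)$ lies in $E(G)$ independently with probability $p'=\frac{p-p_1}{1-p_1}=\frac{20}{n-80}\ge\frac{20}{n}$.

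For Round 1, recall that with probability $1-e^{-\Omega(n)}$ the matching number of $G(n,p_1)$ is $(1-o(1))\frac n2$ (the standard concentration of the matching number of $G(n,c/n)$), so $G_1$ has a matching $M=\{e_1,\dots,e_t\}$ with $t\ge n/3$, say; moreover $e(G_1)\le 41n$ with probability $1-e^{-\Omega(n)}$ by the Chernoff bound. Condition on such a $G_1$ and write $e_l=\{a_l,b_l\}$. For a pair of indices $\{i,j\}$, call the four pairs $a_ia_j,a_ib_j,b_ia_j,b_ib_j$ its \emph{cross-pairs}; since the $e_l$ are disjoint, each cross-pair determines the index-pair it comes from, so distinct index-pairs have disjoint sets of cross-pairs. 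Call $\{i,j\}$ \emph{clean} if none of its cross-pairs is an edge of $G_1$. Then the number of non-clean pairs is at most $e(G_1)\le 41n$, so all but $o(\binom t2)$ of the $\binom t2$ index-pairs are clean.

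For Round 2, fix a clean pair $\{i,j\}$: its four cross-pairs lie outside $E(G_1)$, hence each lies in $E(G)$ independently with probability $p'$. Let $Y_{ij}=1$ exactly when precisely one of the four cross-pairs is in $E(G)$. If $Y_{ij}=1$, with the unique cross-edge being (say) $b_ia_j$, then $a_ib_ia_jb_j$ is an induced copy of $P_3$ in $G$ with vertex set $e_i\cup e_j$: the edges $a_ib_i,b_ia_j,a_jb_j$ are present, while the three remaining cross-pairs $a_ia_j,a_ib_j,b_ib_j$ are non-edges; the other three cases for the location of the cross-edge are symmetric. Because the cross-pair sets of distinct clean pairs are disjoint, the variables $Y_{ij}$ over clean pairs are mutually independent given $G_1$, each with $\mathbb P(Y_{ij}=1)=4p'(1-p')^3\ge 60/n$. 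Let $\mathcal G$ be the graph on $[t]$ with edge set $\{\{i,j\}\text{ clean}:Y_{ij}=1\}$; any matching of size $k$ in $\mathcal G$ yields $k$ pairwise vertex-disjoint induced copies of $P_3$ in $G$ (their vertex sets $e_i\cup e_j$ are disjoint). Now $\mathcal G$ stochastically contains the random graph $G^*$ formed by including each clean pair independently with probability $60/n$, which is essentially a binomial random graph on $t=\Theta(n)$ vertices with constant average degree; so by the standard, exponentially concentrated lower bound on the matching number of such a graph, $\nu(\mathcal G)\ge\nu(G^*)\ge c_0 t\ge n/10$ with probability $1-e^{-\Omega(n)}$ for some absolute $c_0>0$. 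A union bound over the (exponentially unlikely) bad events of Round 1 and over the conditioning on $G_1$ then completes the proof.

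The main obstacle is this last step: showing, with failure probability $e^{-\Omega(n)}$, that $G^*$ has a matching of size at least $n/10$. One also needs to be slightly careful that $G^*$ is a binomial random graph only on the clean pairs rather than on all of $\binom{[t]}{2}$; but only $O(n)=o(t^2)$ pairs are excluded, and deleting that many pairs from a binomial random graph of constant average degree changes the matching number by only $o(n)$, which is comfortably absorbed by the gap between $c_0 t$ and $n/10$. (Alternatively, one could attempt a greedy-removal argument driven by Lemma \ref{no notnecind copy of P_3} — repeatedly deleting an induced $P_3$ while enough edges remain — but controlling the edge loss per step, and hence securing the constant $1/10$, seems more delicate there.)
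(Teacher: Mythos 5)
Your proposal takes a genuinely different route from the paper. The paper's own proof is quite a bit more elementary: it first shows, for a fixed set $A$ of $n/2$ vertices, that $e(G[A])\sim\mathrm{Bin}(\binom{n/2}{2},100/n)$ exceeds $n/2$ with probability $1-e^{-\Omega(n)}$, so by Lemma~\ref{no notnecind copy of P_3} the set $A$ contains a (not necessarily induced) $P_3$; a union bound over all $\binom{n}{n/2}$ such sets then lets one greedily extract at least $n/8$ vertex-disjoint copies of $P_3$. To pass from arbitrary to induced copies, it observes that a non-induced $P_3$ is a set of $4$ vertices spanning at least $4$ edges, an event of probability $O(n^{-4})$ per $4$-set, and a short counting argument (using $\binom{n}{4}^{\ell}/\ell!$ collections and independence over disjoint $4$-sets) shows that one cannot have $\ell=n/8-n/10$ pairwise disjoint such $4$-sets except with probability $e^{-\Omega(n)}$, leaving at least $n/10$ induced copies.

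Your two-round exposure is a clean and appealing way to manufacture fresh randomness for the joining edges, and the observation that the cross-pair sets of distinct index-pairs are disjoint is exactly the right point; the resulting auxiliary graph $G^{*}$ argument is sound in outline. The step you yourself flag as the main obstacle — an exponential-tail lower bound on the matching number of a sparse binomial random graph on $t=\Theta(n)$ vertices with constant average degree — is available: the matching number changes by at most $1$ under a vertex edit, so McDiarmid's bounded-differences inequality (over vertex exposure) gives concentration within $\pm\varepsilon t$ with failure probability $e^{-\Omega(t)}$, and the expected matching number of $G(t,c/t)$ for large constant $c$ is well above $0.4t$ (this is the Karp--Sipser/Erd\H{o}s--R\'enyi analysis). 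Similarly your Round 1 needs a linear lower bound, with exponential concentration, on $\nu(G(n,80/n))$, which comes from the same source. So your argument is correct, but it leans on this non-elementary random-graph input as a black box, plus the extra care you note about excluding the $O(n)$ non-clean pairs. The paper's route avoids all matching-number machinery and is self-contained, relying only on Chernoff, a union bound, and the deterministic Lemma~\ref{no notnecind copy of P_3}; what your approach buys in exchange is conceptual transparency about where the induced $P_3$'s come from.
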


\begin{proof}
    For a fixed subset $A$ of $k =  \frac{n}{2}$ vertices, applying the Chernoff bound to $e(G[A])\sim \mathrm{Bin}(\binom{k}{2}, p)$ gives $\mathbb{P}(e(G[A])\leq k) = O\left(e^{-n}\right)$. By Lemma \ref{no notnecind copy of P_3} applied to $G[A]$, the probability that $G[A]$ has no copies of $P_3$ is $O\left(e^{-n}\right)$. Using the union bound, the probability that there exists some $A\subseteq V(G)$ of size $k$ containing no copies of $P_3$ is at most $\binom{n}{k}O\left(e^{-n}\right) = e^{-\Omega(n)}$. Hence with probability $1 - e^{-\Omega(n)}$, we can keep taking vertex-disjoint copies of $P_3$, until there are less than $k$ vertices left, giving at least $\frac{n}{8}$ vertex-disjoint copies of $P_3$. 

    To find induced copies of $P_3$, we note that any non-induced copy of $P_3$ has at least four edges. The probability that a given set of four vertices induces at least four edges in $G(n,p)$ is $O(1/n^4)$. Hence, letting $\ell =  \frac{n}{8} - \frac{n}{10} $, the probability that there are at least $\ell$ disjoint sets of four vertices each inducing at least four edges is, by the union bound, at most $\left(\binom{n}{4}^\ell/\ell!\right)\cdot O(1/n^4)^\ell= O(1)^\ell/\ell! = e^{-\Omega(n)}$. Hence with probability $1 - e^{-\Omega(n)}$, there are at least $\frac{n}{8} - \ell = \frac{n}{10}$ vertex-disjoint induced copies of $P_3$ in $G$. 
\end{proof}

We extend this result to $p\in [\frac{100}{n}, 1 - \frac{100}{n}]$ by a sprinkling argument. 

\begin{lem} \label{many induced P_3}
    If $\frac{100}{n}\leq p\leq 1 - \frac{100}{n}$ and $G\sim G(n, p)$, then with probability $1 - e^{-\Omega(n)}$, $G$ has at least $\frac{n}{100}$ vertex-disjoint induced copies of $P_3$. 
\end{lem}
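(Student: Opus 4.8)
The plan is to split the range of $p$ into two parts and to use the complement to reduce the dense end to the sparse end. The crucial preliminary observation is that $P_3$ (the path on four vertices $a,b,c,d$ with edges $ab,bc,cd$) is self-complementary: its complement is the path $b,d,a,c$. Consequently, for every graph $H$ and every four-element set $S$ we have $H[S]\cong P_3$ if and only if $\overline H[S]\cong P_3$, so $G$ and $\overline G$ admit exactly the same families of vertex-disjoint induced copies of $P_3$. Since $\overline{G(n,p)}\sim G(n,1-p)$ and $1-p$ again lies in $[100/n,1-100/n]$, it therefore suffices to prove the lemma when $p\le 1/2$.

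Suppose first that $1/4\le p\le 1/2$. Partition $[n]$ into $\lfloor n/4\rfloor$ pairwise-disjoint four-element sets (discarding at most three vertices). For a fixed such set $Q$, there are $4!/|\mathrm{Aut}(P_3)|=12$ labelled copies of $P_3$ on $Q$, each present in $G$ with probability $p^3(1-p)^3$, so $\mathbb{P}(G[Q]\cong P_3)=12p^3(1-p)^3\ge 12\cdot(1/4)^3(3/4)^3>1/13$; moreover these events are mutually independent across the parts of the partition, since they involve disjoint sets of pairs. Hence the number of parts $Q$ with $G[Q]\cong P_3$ stochastically dominates $\mathrm{Bin}(\lfloor n/4\rfloor,1/13)$, whose mean exceeds $n/100$ by a constant factor; by the Chernoff bound this count is at least $n/100$ with probability $1-e^{-\Omega(n)}$, and each such $Q$ is a vertex-disjoint induced copy of $P_3$.

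Now suppose $100/n\le p\le 1/4$, where I would sprinkle on top of Lemma~\ref{exact}. Write $G\sim G(n,p)$ as $G=G_1\cup G_2$ with $G_1\sim G(n,100/n)$ and $G_2\sim G(n,p_2)$ independent, where $p_2\in[0,1]$ is determined by $(1-100/n)(1-p_2)=1-p$ (solvable precisely because $p\ge 100/n$), so that $1-p_2=(1-p)/(1-100/n)\ge 1-p\ge 3/4$. By Lemma~\ref{exact}, with probability $1-e^{-\Omega(n)}$ the graph $G_1$ contains at least $n/10$ vertex-disjoint induced copies of $P_3$; fix such a family $Q_1,\dots,Q_{\lfloor n/10\rfloor}$ chosen as a deterministic function of $G_1$. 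Conditional on $G_1$, the events $E_i=\{G_2$ has no edge inside $Q_i\}$ are mutually independent (disjoint sets of pairs again) and each has probability $(1-p_2)^6\ge(3/4)^6$, and on $E_i$ we still have $G[Q_i]=G_1[Q_i]\cong P_3$. Hence the number of vertex-disjoint induced copies of $P_3$ in $G$ is at least the number of indices $i$ for which $E_i$ holds, which stochastically dominates $\mathrm{Bin}(\lfloor n/10\rfloor,(3/4)^6)$; its mean again exceeds $n/100$ by a constant factor, so the Chernoff bound gives at least $n/100$ such copies with probability $1-e^{-\Omega(n)}$. A union bound over the (exponentially unlikely) failure of Lemma~\ref{exact} completes this case.

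The only delicate point is the interplay between the two regimes near the threshold $p=1/4$. A single round of sprinkling from the sparse base keeps only a $(1-p)^6$-fraction of the $n/10$ copies supplied by Lemma~\ref{exact}, which deteriorates as $p$ grows and fails outright near $p=1/2$, so a direct argument is unavoidable there; conversely, the partition argument needs $12p^3(1-p)^3$ to be bounded below by a constant and so fails near $p\in\{0,1\}$. Thus the main thing to verify is that some threshold (indeed a whole interval of them) makes both estimates clear the target $n/100$ simultaneously; should the chosen constants be uncomfortably tight, one can instead extract from the proof of Lemma~\ref{exact} a slightly larger linear number of vertex-disjoint induced copies of $P_3$ (using a larger constant in place of $100$) to create more room.
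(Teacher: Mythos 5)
Your proof is correct, but you take a slightly different route than the paper, which forces an extra case split that the paper avoids. The paper also reduces to $p\le 1/2$ using $\overline{P_3}=P_3$ and then sprinkles on top of Lemma~\ref{exact}: it writes $G=G_1\cup G_2$ with $G_1\sim G(n,100/n)$ and $G_2\sim G(n,\tilde p)$, finds $n/10$ vertex-disjoint induced copies of $P_3$ in $G_1$, and argues that each survives in $G$ with probability at least $1/8$. The key difference is the survival condition: the paper only requires the \emph{three non-edges} of each $P_3$ to remain non-edges in $G_2$, which has probability $(1-\tilde p)^3\ge (1/2)^3=1/8$ for the entire range $\tilde p\le p\le 1/2$, whereas you require all six pairs inside $Q_i$ to be non-edges in $G_2$, giving $(1-p_2)^6$, which degrades to $1/64$ near $p=1/2$ and falls below the $n/100$ target. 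This is exactly why you needed the auxiliary direct partition argument on $[1/4,1/2]$ (a perfectly valid computation: $12p^3(1-p)^3 > 1/13$ there, with independence across a vertex partition). So your two-case proof works, and the partition argument in the dense sub-range is a reasonable and self-contained alternative, but the paper's single sprinkling argument is tighter simply by observing that the three edges of $P_3$ already present in $G_1$ cannot be harmed by $G_2$, so only the three non-edges need protecting.
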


\begin{proof}
    Since $\overline{P_3} = P_3$, we may assume that $p\leq \frac{1}{2}$. 
    
    Note that $G$ can be generated as the union of $G_1$ and $G_2$, where $G_1\sim G(n,100/n)$ and $G_2\sim G(n,\tilde{p})$ are independent random graphs on the same vertex set, and $1-p = (1-100/n)(1 - \Tilde{p})$.
    Note that $0\leq \Tilde{p}\leq p\leq \frac{1}{2}$.
    
    By Lemma \ref{exact}, with probability $1 - e^{-\Omega(n)}$, we may find a collection $H_1, H_2, \dots, H_k$ of vertex-disjoint induced copies of $P_3$ in $G_1$ where $k = \frac{n}{10}$. Note that this event and the choice of $H_i$ are dependent only on $G_1$ and not on $G_2$. 
    
    From here onwards, we condition on $G_1$, and discuss probabilities with respect to the random choice of $G_2$. 

    For each $H_i$, with probability at least $\frac{1}{8}$, the three non-edges in $H_i$ are non-edges in $G_2$, and hence $H_i$ remains an induced copy of $P_3$ in $G$. 
    
    As $H_i$ are vertex-disjoint, and in particular edge-disjoint, these events are independent for each $H_i$. Hence the number of $H_i$s which remain induced copies of $P_3$ dominates the binomial distribution $\mathrm{Bin}(k, \frac{1}{8})$, which is at least $\frac{k}{10}$ with probability $1 - e^{-\Omega(n)}$ by the Chernoff bound.
\end{proof}

We are now ready to prove Theorem \ref{G(n, p)} in the dense regime.

\begin{thm} \label{p dense thm}
    If $1 - \frac{1}{10^6}\leq p\leq 1 - \frac{10^5}{n}$ and $G\sim G(n, p)$, then
    \begin{equation*}
        \mathbb{P}(G \text{ is even-non-decomposable}) = e^{-\Omega(n)}. 
    \end{equation*}
\end{thm}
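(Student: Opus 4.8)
The plan is to verify that the deterministic conditions of Lemma \ref{lem:deterministic dense} hold for $G\sim G(n,p)$ with probability $1-e^{-\Omega(n)}$, in the regime $1-10^{-6}\le p\le 1-10^5/n$. There are three conditions to check: (i) $G$ contains at least $n/100$ vertex-disjoint induced copies of $P_3$; (ii) $G$ has no clique of size $n/200$; (iii) every vertex has degree at most $n/10^5$ in $\overline G$. Condition (i) follows immediately from Lemma \ref{many induced P_3}, since $p\le 1-10^5/n\le 1-100/n$ and $p\ge 1-10^{-6}\ge 100/n$ for large $n$, so with probability $1-e^{-\Omega(n)}$ there are even $n/100$ such copies (indeed more). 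For condition (ii): since $p\le 1-10^5/n$, in particular $p$ is bounded away from $1$, so the expected number of copies of $K_{n/200}$ is $\binom{n}{n/200}p^{\binom{n/200}{2}}=e^{-\Omega(n^2)}$, and Markov's inequality gives that $G$ is $K_{n/200}$-free with probability $1-e^{-\Omega(n^2)}\ge 1-e^{-\Omega(n)}$.

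Condition (iii) is where the precise upper bound $p\le 1-10^5/n$ is used and is the heart of the argument. For a fixed vertex $v$, its degree in $\overline G$ is distributed as $\mathrm{Bin}(n-1,1-p)$, with mean $(n-1)(1-p)\le (n-1)/10^6$ (since $1-p\le 1/10^6$), which is comfortably below $n/10^5$ by a constant factor. Applying the upper-tail Chernoff bound with $\delta$ a constant of size roughly $10$ (so that $(1+\delta)(n-1)(1-p)\le n/10^5$ holds, using $1-p\ge 10^5/n$ so that the mean is at least order $1$ and the Chernoff bound is genuinely exponential), we get $\mathbb P(\deg_{\overline G}(v)>n/10^5)\le e^{-\Omega(n(1-p))}\le e^{-\Omega(10^5)}$... — wait, more carefully: the exponent is $\frac{\delta^2}{2+\delta}(n-1)(1-p)$, and since $(n-1)(1-p)\ge 10^5(n-1)/n=\Omega(1)$ but we need it to beat $\log n$ for a union bound over $n$ vertices. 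Here one uses the lower bound $1-p\ge 10^5/n$ the other way: actually $(n-1)(1-p)$ can be as small as $\approx 10^5$, which is a constant, not growing — so a naive union bound over $n$ vertices fails. The fix is that we don't need $1-p\ge 10^5/n$ to make the mean large; instead observe that $n/10^5$ itself is linear in $n$, so even when the mean $(n-1)(1-p)$ is a large constant $C$ with $C\le 10^5$, the event $\deg_{\overline G}(v)>n/10^5$ requires a Poisson-type deviation to $\Omega(n)$, which has probability at most $(eC/(n/10^5))^{n/10^5}=e^{-\Omega(n\log n)}$, easily surviving the union bound over $n$ vertices; and when $1-p$ is of constant order (i.e. $p\le 1-10^{-6}$ but $1-p$ up to $10^{-6}$), the mean is $\Theta(n)$ and standard Chernoff with constant $\delta$ gives $e^{-\Omega(n)}$, again surviving the union bound. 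Either way, $\mathbb P(\exists v:\deg_{\overline G}(v)>n/10^5)\le n\cdot e^{-\Omega(n)}=e^{-\Omega(n)}$.

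Putting the three bounds together by a union bound, with probability $1-e^{-\Omega(n)}$ all hypotheses of Lemma \ref{lem:deterministic dense} are satisfied, so $G$ is even-decomposable whenever $e(G)$ is even; and when $e(G)$ is odd, $G$ trivially has no even-decomposition but also we are only asserting a statement about even-non-decomposability, so we should note that "even-non-decomposable" by definition requires $e(G)$ even — hence $\mathbb P(G\text{ even-non-decomposable})\le \mathbb P(\text{hypotheses of Lemma \ref{lem:deterministic dense} fail})=e^{-\Omega(n)}$, completing the proof. The only genuinely delicate point is the degree bound in $\overline G$: one must handle the regime where the mean co-degree $(n-1)(1-p)$ is merely a large constant (no room for a crude union bound that needs $\log n$ savings) separately from the regime where it is linear in $n$, but in the former regime the target $n/10^5$ is so far above the constant mean that the tail bound is super-exponentially small, so the union bound goes through comfortably in both cases.
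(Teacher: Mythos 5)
Your proof is essentially correct and follows the same overall strategy as the paper: verify the three hypotheses of Lemma \ref{lem:deterministic dense} with probability $1-e^{-\Omega(n)}$ and conclude via a union bound. Your treatment of conditions (i) and (ii) matches the paper's. The difference lies in your handling of condition (iii), where you take a more circuitous route than necessary.

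For condition (iii), you correctly notice that applying the Chernoff bound directly to $\mathrm{Bin}(n-1,1-p)$ with a \emph{constant} $\delta$ fails, because the mean $(n-1)(1-p)$ can be as small as roughly $10^5$, giving a constant (not growing) exponent and killing the union bound over $n$ vertices. You then rescue the argument by a case split between the constant-mean and linear-mean regimes, invoking a Poisson-type tail bound in the first case. This works, though as stated the split does not explicitly cover the intermediate regime $(n-1)(1-p)=\omega(1)$ but $o(n)$ (the Poisson-type bound $(e\mu/k)^k$ does in fact cover it, so the gap is cosmetic). The paper avoids all of this with a single observation: since $1-p\leq 10^{-6}$, the random variable $\deg_{\overline G}(v)\sim \mathrm{Bin}(n-1,1-p)$ is stochastically dominated by $\mathrm{Bin}(n,10^{-6})$, whose mean $n/10^6$ is linear in $n$, so Chernoff with the fixed constant $\delta=9$ gives $\mathbb{P}(\deg_{\overline G}(v)>n/10^5)=e^{-\Omega(n)}$ uniformly across the entire range of $p$, and no case analysis is required. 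The equivalent way to see this without invoking domination is that one should let $\delta$ scale with $1/(1-p)$ rather than keeping it fixed, so that the threshold being at distance $\Omega(n)$ above the mean is what drives the exponent. Your final observation that ``even-non-decomposable'' entails an even edge count, so the parity hypothesis of Lemma \ref{lem:deterministic dense} is automatic, is correct and matches the implicit reasoning in the paper.
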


\begin{proof}
    We lower bound the probability of three likely events, which together imply even decomposability by Lemma \ref{lem:deterministic dense}. 
    \begin{enumerate}
        \item For a fixed vertex $v$, $\deg_{\overline{G}}(v)\sim \mathrm{Bin}(n - 1, 1 - p)$ is dominated by $\mathrm{Bin}(n, \frac{1}{10^6})$. Applying the Chernoff bound to the latter gives that $\mathbb{P}(\deg_{\overline{G}}(v) > \frac{n}{10^5}) = e^{-\Omega(n)}$. By the union bound, with probability $1 - e^{-\Omega(n)}$, all vertices in the complement graph $\overline{G}$ have degree at most $\frac{n}{10^5}$. 
        \item We bound the clique number of $G$. The expected number of copies of $K_{\frac{n}{200}}$ in $G$ is $\binom{n}{n/200}p^{\binom{n/200}{2}}\leq (200e)^{n/200}e^{-\frac{10^5}{n}\binom{n/200}{2}}= e^{-\Omega(n)}$. Hence by Markov's inequality, with probability $1 - e^{-\Omega(n)}$, there are no copies of $K_{\frac{n}{200}}$ in $G$. 
        \item Lastly, by Lemma \ref{many induced P_3}, we have that with probability $1 - e^{-\Omega(n)}$, $G$ has at least $\frac{n}{100}$ vertex-disjoint induced copies of $P_3$.
    \end{enumerate}
    The theorem follows by applying Lemma \ref{lem:deterministic dense}.
\end{proof}

We now turn to the sparse regime. Using Lemmas \ref{sparse stage 2} and \ref{sparse stage 3} in place of Lemmas~\ref{dense stage 2} and \ref{dense stage 3}, one can prove the following analogue of Lemma \ref{lem:deterministic dense}.

\begin{lem} \label{lem:deterministic sparse}
    Let $G$ be a graph on $n$ vertices with an even number of edges. Assume that $G$ contains at least $\frac{n}{100}$ vertex-disjoint induced copies of $P_3$, $G$ has no copies of $K_{\frac{n}{200}}$, and all vertices have degree at most $\frac{n}{10^5}$ in $G$. Then $G$ is even-decomposable.
\end{lem}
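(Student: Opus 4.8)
The approach is to run the three-stage decomposition from the proof of Lemma~\ref{lem:deterministic dense} almost verbatim, making the substitutions forced by the sparse regime: each hypothesis ``complete to'' becomes ``sends no edges to'', the bound on degrees in $\overline{G}$ becomes the bound on degrees in $G$, and Lemmas~\ref{sparse stage 2} and~\ref{sparse stage 3} are used in place of Lemmas~\ref{dense stage 2} and~\ref{dense stage 3}. Fix a collection $\mathcal{H}$ of $n/100$ vertex-disjoint induced copies of $P_3$ in $G$. In Stage~1 I would apply the greedy algorithm (Lemma~\ref{greedy}) to $V(G)\setminus\bigcup_{H\in\mathcal{H}}V(H)$, obtaining a clique $C_0$ all of whose vertices have odd degree in the remaining graph; since $G$ has no $K_{n/200}$ we get $|C_0|\le n/200$.

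In Stage~2 I would iteratively shrink the clique, maintaining the invariant that after $i$ steps the remaining vertex set is the disjoint union of $\bigcup_{H\in\mathcal{H}_i}V(H)$ and a clique $C_i$ of odd-degree vertices, with $|\mathcal{H}_i|=n/100-i$ and $|C_i|\le|C_0|-i$. While $|C_{i-1}|\ge 3$, choose $C\subseteq C_{i-1}$ with $|C|=3$; since every vertex of $G$ has degree at most $n/10^5$, $C$ sends an edge to the vertex set of at most $3n/10^5$ of the copies in $\mathcal{H}_{i-1}$, and since Stage~2 runs for at most $n/200$ steps we have $|\mathcal{H}_{i-1}|\ge n/100-n/200 = n/200 > 3n/10^5$, so some $H\in\mathcal{H}_{i-1}$ has vertex set $R$ sending no edges to $C$. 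Apply Lemma~\ref{sparse stage 2} to $R$ and $C$, then greedily tidy up (Lemma~\ref{greedy}) the union of the resulting clique with $C_{i-1}\setminus C$ to obtain $C_i$; the invariant is restored and $|C_i|\le|C_{i-1}|-1$. Thus Stage~2 terminates with a clique $D$ of size at most $2$ and a surviving subcollection $\mathcal{J}\subseteq\mathcal{H}$ with $|\mathcal{J}|\ge n/200$.

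In Stage~3 I would form the auxiliary graph $L$ on vertex set $\mathcal{J}\cup\{D\}$ with an edge between two members of $\mathcal{J}$ (respectively, between a member of $\mathcal{J}$ and $D$) exactly when their vertex sets send no edges to each other. The degree bound in $G$ gives $\deg_L(D)\ge|\mathcal{J}|-2n/10^5$ and $\deg_L(J)\ge|\mathcal{J}|-4n/10^5$, each exceeding $v(L)/2$ for large $n$ (the inequalities coincide with those in the dense proof, using $n/200\gg n/10^5$), so Dirac's theorem yields a Hamilton cycle $D,J_1,\dots,J_{|\mathcal{J}|}$. Then peel the copies off one at a time via Lemma~\ref{sparse stage 3}: $V(J_1)$ sends no edges to $D$, so the lemma deletes $D$ and leaves a clique $D'\subseteq V(J_1)$ of at most two odd-degree vertices; next $V(J_2)$ sends no edges to $V(J_1)\supseteq D'$, so apply it again, and continue around the cycle. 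At most two vertices remain at the end, and since $e(G)$ is even and admissible removals preserve parity, the remaining graph is even-decomposable.

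I do not expect a genuine obstacle: this is essentially a transcription of the dense argument. The only points needing a little attention are (i) that $|\mathcal{H}_{i-1}|$ stays above $3n/10^5$ throughout Stage~2, which holds because the clique needs at most $|C_0|-2\le n/200$ shrinking steps, and (ii) that Lemma~\ref{sparse stage 2}, applied to a $3$-clique, returns a clique of size at most $\max(3-1,2)=2$, so the bound $\max(m-1,2)$ there (weaker than the $m-1$ in Lemma~\ref{dense stage 2}) costs nothing; both checks, together with the Dirac inequalities, are routine for all sufficiently large $n$.
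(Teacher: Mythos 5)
Your proposal is correct and takes exactly the approach the paper intends: the paper simply asserts that Lemma~\ref{lem:deterministic sparse} can be proved by substituting Lemmas~\ref{sparse stage 2} and~\ref{sparse stage 3} for Lemmas~\ref{dense stage 2} and~\ref{dense stage 3} in the three-stage proof of Lemma~\ref{lem:deterministic dense}, with ``complete to'' replaced by ``sends no edges to'' and the degree bound taken in $G$ rather than $\overline{G}$. Your write-up is a faithful and correct transcription carrying out that routine substitution, including the two small checks you flag at the end.
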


Using this lemma and an argument nearly identical to the one in Theorem \ref{p dense thm}, we obtain the following result.

\begin{thm} \label{p sparse thm}
    If $\frac{10^5}{n}\leq p\leq \frac{1}{10^6}$ and $G\sim G(n, p)$, then
    \begin{equation*}
        \mathbb{P}(G\text{ is even-non-decomposable}) = e^{-\Omega(n)}. 
    \end{equation*}
\end{thm}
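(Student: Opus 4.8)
The plan is to mirror almost verbatim the proof of Theorem~\ref{p dense thm}, replacing the complement-degree hypothesis by the corresponding statement for $G$ itself, and invoking Lemma~\ref{lem:deterministic sparse} in place of Lemma~\ref{lem:deterministic dense}. Concretely, for $G\sim G(n,p)$ with $\frac{10^5}{n}\le p\le \frac{1}{10^6}$, I would lower bound the probability of three events whose conjunction, together with $e(G)$ being even, implies even decomposability via Lemma~\ref{lem:deterministic sparse}.

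First, for the degree bound: for a fixed vertex $v$, $\deg_G(v)\sim\mathrm{Bin}(n-1,p)$ is dominated by $\mathrm{Bin}(n,\frac{1}{10^6})$, so the Chernoff bound gives $\mathbb{P}(\deg_G(v)>\frac{n}{10^5})=e^{-\Omega(n)}$ (here $\delta$ is a constant bounded away from $0$ since $\frac{n}{10^5}\ge 10\cdot\frac{n}{10^6}$), and a union bound over the $n$ vertices keeps this $e^{-\Omega(n)}$. Second, for the clique number: the expected number of copies of $K_{n/200}$ is $\binom{n}{n/200}p^{\binom{n/200}{2}}\le\binom{n}{n/200}\le(200e)^{n/200}=e^{O(n)}$ — wait, this is the point that needs care (see below). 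Third, by Lemma~\ref{many induced P_3}, with probability $1-e^{-\Omega(n)}$, $G$ contains at least $\frac{n}{100}$ vertex-disjoint induced copies of $P_3$ (applicable since $p\ge\frac{10^5}{n}\ge\frac{100}{n}$ and $p\le\frac{1}{10^6}\le 1-\frac{100}{n}$ for large $n$). Taking a union bound over the complements of these three events and applying Lemma~\ref{lem:deterministic sparse} completes the proof.

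The main obstacle is the clique-number bound: unlike in the dense regime, here $p$ can be as large as $\frac{1}{10^6}$, which is a constant, so the trivial estimate $\binom{n}{n/200}=e^{O(n)}$ is not beaten by $p^{\binom{n/200}{2}}=e^{-\Theta(n^2)}$ — actually it \emph{is}, since $p^{\binom{n/200}{2}}=e^{-\Theta(n^2)}$ dominates $e^{O(n)}$, giving expected number $e^{-\Omega(n^2)}=e^{-\Omega(n)}$; so in fact even the crude bound suffices. More precisely, $\binom{n}{n/200}p^{\binom{n/200}{2}}\le 2^n\cdot p^{\binom{n/200}{2}}$, and since $p\le\frac12$ and $\binom{n/200}{2}=\Omega(n^2)$, this is $2^n e^{-\Omega(n^2)}=e^{-\Omega(n^2)}$. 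By Markov's inequality, with probability $1-e^{-\Omega(n)}$ there is no $K_{n/200}$ in $G$. Thus the only genuine adaptation is cosmetic — replacing $\overline{G}$ by $G$ in the degree statement — and the argument of Theorem~\ref{p dense thm} goes through mutatis mutandis.

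Finally, I should remark that Lemma~\ref{lem:deterministic sparse} itself is obtained from the proof of Lemma~\ref{lem:deterministic dense} by replacing, in Stages~2 and~3, each appeal to Lemma~\ref{dense stage 2} (resp. Lemma~\ref{dense stage 3}) by the corresponding appeal to Lemma~\ref{sparse stage 2} (resp. Lemma~\ref{sparse stage 3}); the key point is that in Stage~2, rather than finding an $H\in\mathcal{H}_{i-1}$ whose vertex set is \emph{complete} to the chosen triple $C\subseteq C_{i-1}$, one now uses the low-degree hypothesis for $G$ to find an $H$ whose vertex set sends \emph{no} edges to $C$ (a vertex of $C$ is adjacent to vertices in at most $\frac{n}{10^5}$ members of $\mathcal{H}_{i-1}$, so three vertices rule out at most $\frac{3n}{10^5}$ of them, leaving plenty among the $\ge n/200$ available), and similarly in Stage~3 one builds the Hamilton cycle in the graph $L$ where $J\sim J'$ iff $V(J)$ sends \emph{no} edges to $V(J')$, whose minimum degree is still $>v(L)/2$ by the degree bound. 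These are exactly the hypotheses under which Lemmas~\ref{sparse stage 2} and~\ref{sparse stage 3} apply, so the three-stage even-decomposition carries over unchanged.
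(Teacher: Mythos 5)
Your proposal is correct and matches exactly what the paper intends: the paper itself gives no explicit proof for this theorem, saying only that it follows from Lemma~\ref{lem:deterministic sparse} by ``an argument nearly identical to the one in Theorem~\ref{p dense thm}'', and you have carried out that adaptation faithfully, including the only point that genuinely changes (bounding $\deg_G(v)$ rather than $\deg_{\overline{G}}(v)$) and correctly noting that the clique-number bound only gets easier in the sparse regime since $p^{\binom{n/200}{2}}\le (10^{-6})^{\binom{n/200}{2}}=e^{-\Omega(n^2)}$. Your closing remark on how Lemma~\ref{lem:deterministic sparse} is obtained by substituting Lemmas~\ref{sparse stage 2} and~\ref{sparse stage 3} and redefining the auxiliary graph $L$ via ``sends no edges'' rather than ``is complete'' is also the right reading of the paper's terse statement.
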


For very sparse random graphs, it is easy to prove that the probability of obtaining an even-non-decomposable graph is low.

\begin{prop} \label{p very sparse thm}
    If $p\leq\frac{10^5}{n}$ and $G\sim G(n, p)$, then
    \begin{equation*}
        \mathbb{P}(G \text{ is even-non-decomposable}) = O\left(\frac{1}{n^2}\right). 
    \end{equation*}
\end{prop}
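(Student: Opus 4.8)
The plan here is very short, since the very sparse regime can be dispatched by a first-moment argument on copies of $K_4$. The key input is Remark \ref{rem:k4free}: any graph with an even number of edges and no copy of $K_4$ is even-decomposable. Since, by definition, an even-non-decomposable graph has an even number of edges but is not even-decomposable, the contrapositive of Remark \ref{rem:k4free} tells us that such a graph must contain a copy of $K_4$. Hence
$$\mathbb{P}(G \text{ is even-non-decomposable}) \leq \mathbb{P}(K_4 \subseteq G).$$

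To control the right-hand side, I would let $X$ denote the number of copies of $K_4$ in $G$ and apply Markov's inequality. We have $\mathbb{E}[X] = \binom{n}{4} p^{\binom{4}{2}} = \binom{n}{4} p^6$, and using the hypothesis $p \leq 10^5/n$ this is at most $\tfrac{n^4}{24}\left(\tfrac{10^5}{n}\right)^6 = O(1/n^2)$. Therefore $\mathbb{P}(K_4 \subseteq G) = \mathbb{P}(X \geq 1) \leq \mathbb{E}[X] = O(1/n^2)$, which gives the claimed bound.

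There is essentially no obstacle here: the only point requiring a moment's care is that the term ``even-non-decomposable'' already incorporates the parity hypothesis ($e(G)$ even) needed to invoke Remark \ref{rem:k4free}, so that remark applies verbatim. In contrast to the other regimes, no Chernoff-type concentration estimate is needed; one could of course sharpen the constant implicit in the $O(1/n^2)$, but this is immaterial for the application to Theorem \ref{G(n, p)}.
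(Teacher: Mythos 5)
Your proposal matches the paper's proof exactly: both invoke Remark \ref{rem:k4free} to reduce to the event that $G$ contains a $K_4$, then bound $\mathbb{E}[\#K_4] = \binom{n}{4}p^6 = O(1/n^2)$ and apply Markov's inequality. No differences to note.
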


\begin{proof}
    We bound the probability that $G$ contains $K_4$ as a subgraph, as all graphs with an even number of edges and without a copy of $K_4$ are even-decomposable (see Remark \ref{rem:k4free}). The expected number of copies of $K_4$ is $\binom{n}{4}p^6 = O\left(\frac{1}{n^2}\right)$, yielding the desired result by Markov's inequality. 
\end{proof}

We can summarise Theorem \ref{p average thm}, Theorem \ref{p dense thm}, Theorem \ref{p sparse thm} and Proposition \ref{p very sparse thm} in the following result, which proves Theorem \ref{G(n, p)} in a slightly stronger form.

\begin{thm}
    If $p\leq 1 - \frac{10^5}{n}$ and $G\sim G(n, p)$, then
    \begin{equation*}
        \mathbb{P}(G \text{ is even-non-decomposable}) = O\left(\frac{1}{n^2}\right). 
    \end{equation*}
\end{thm}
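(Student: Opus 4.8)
The plan is to deduce this directly from the four results already established --- Theorem \ref{p average thm}, Theorem \ref{p dense thm}, Theorem \ref{p sparse thm} and Proposition \ref{p very sparse thm} --- by checking that together they cover the whole range $0\le p\le 1-\frac{10^5}{n}$ and that their various bounds combine into a single $O(1/n^2)$. Given $p$ in this range, I would split into cases according to which of the intervals $\left[0,\frac{10^5}{n}\right]$, $\left[\frac{10^5}{n},\frac{1}{10^6}\right]$, $\left[\frac{1}{10^6},1-\frac{1}{10^6}\right]$ and $\left[1-\frac{1}{10^6},1-\frac{10^5}{n}\right]$ contains $p$, and invoke Proposition \ref{p very sparse thm}, Theorem \ref{p sparse thm}, Theorem \ref{p average thm} and Theorem \ref{p dense thm} respectively. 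The first of these already yields a bound of $O(1/n^2)$; the other three yield $e^{-\Omega(n)}$ or $e^{-\Omega(n^2)}$, which is certainly $O(1/n^2)$. Since the implied constants in all four source results are absolute, so is the one in the conclusion.

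The one small point to verify is that these four intervals actually tile $\left[0,1-\frac{10^5}{n}\right]$. This holds exactly when $\frac{10^5}{n}\le\frac{1}{10^6}$, i.e.\ when $n\ge 10^{11}$; for such $n$ the intervals overlap at their endpoints and their union is all of $\left[0,1-\frac{10^5}{n}\right]$. For $n<10^{11}$ there is nothing to prove, since then $\mathbb{P}(G\text{ is even-non-decomposable})\le 1\le\frac{10^{22}}{n^2}=O(1/n^2)$. (If one wishes to also include $p=0$, this lies in the first interval, where $G$ is empty and hence trivially even-decomposable.)

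I do not expect a genuine obstacle here: all the substantive work lies in the four preceding statements, and what remains is bookkeeping. The only mildly delicate points are confirming that the intervals cover the full parameter range once $n\ge 10^{11}$ (with small $n$ handled trivially) and noting that $e^{-\Omega(n)}$ and $e^{-\Omega(n^2)}$ are both $O(1/n^2)$ with absolute constants --- both of which I would spell out explicitly as above.
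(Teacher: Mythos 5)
Your proposal is correct and is exactly the paper's approach: the theorem is stated as a summary of Theorem \ref{p average thm}, Theorem \ref{p dense thm}, Theorem \ref{p sparse thm} and Proposition \ref{p very sparse thm}, which cover the four subintervals of $[0,1-10^5/n]$. You simply spell out the bookkeeping (interval tiling once $n\ge 10^{11}$, small $n$ handled trivially) that the paper leaves implicit.
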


The following result shows that very dense random graphs are unlikely to be even-decomposable, and proves Proposition \ref{G(n, p) triv dir} in a slightly stronger form.

\begin{prop}
    If $p\geq 1 - \frac{10^{-1}}{n}$ and $G\sim G(n, p)$, then
    \begin{equation*}
        \mathbb{P}(G \text{ is even-decomposable}) = e^{-\Omega(n)}.
    \end{equation*}
\end{prop}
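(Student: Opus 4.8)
The statement asserts that if $p \geq 1 - \frac{10^{-1}}{n}$, then $G \sim G(n,p)$ is even-decomposable with probability only $e^{-\Omega(n)}$. My plan is to show that with probability $1 - e^{-\Omega(n)}$, the graph $G$ is both extremely dense (very close to $K_n$) and yet has an obstruction to even-decomposability. The natural obstruction: if $G$ were a clique $K_n$ on an even number of vertices' worth of edges it might decompose, but the point is that an almost-complete graph has \emph{no independent set of size larger than $2$} with high probability, so essentially the only admissible removals available are single vertices of even degree and non-adjacent pairs of odd-degree vertices. I would first record the structural fact: with probability $1 - e^{-\Omega(n)}$, $\overline{G}$ has maximum degree at most, say, $2$ (since $\deg_{\overline{G}}(v) \sim \mathrm{Bin}(n-1, 1-p)$ is dominated by $\mathrm{Bin}(n, 10^{-1}/n)$, which exceeds $2$ with probability $O(1/n^2)$ per vertex, hence $O(1/n)$ over all vertices — and one pushes this to exponentially small by instead demanding that $\overline G$ has $o(n)$ edges total, or has no vertex of complement-degree more than some constant, whichever makes the counting work).

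**Main step.**
Conditioned on $\overline{G}$ being very sparse, I would argue directly that no admissible removal can ever be performed, or more precisely that $G$ cannot be even-decomposed, by a parity/invariant argument. Here is the mechanism I expect to work: in an almost-complete graph, at the first step of any purported even-decomposition we remove an independent set $S$; since independent sets have size $\leq 2$, we remove one vertex or a non-adjacent pair. A single vertex $v$ can be removed only if $\deg_G(v)$ is even, i.e. $\deg_{\overline G}(v)$ has the opposite parity to $n-1$; a non-adjacent odd-odd pair can be removed only if there \emph{is} a non-edge, i.e. $\overline G$ has an edge. The key point is that in $K_n$ itself (with $n \equiv 0$ or $1 \pmod 4$ so that $e(K_n)$ is even) one cannot even-decompose: every vertex has degree $n-1$, which is odd when $n$ is even, so no single vertex can be removed, and there are no non-edges at all, so no pair can be removed — we are immediately stuck. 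Since $G$ differs from $K_n$ in only $o(n)$ — in fact, with the right truncation, in only a bounded number of — edges, I would show the same obstruction persists: essentially all vertices still have the "wrong" parity of degree, and the few non-edges are too scarce to unlock a genuine decomposition. I would make this rigorous by tracking, throughout any attempted decomposition, the number of non-edges remaining (it only decreases, starting from a tiny number) against the number of odd-degree vertices (which stays $\Omega(n)$ by a parity/counting argument), showing we must get stuck with a large clique of odd-degree vertices and no way to proceed.

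**Cleanest route and the obstacle.**
Rather than the delicate tracking above, the cleanest route is probably: with probability $1 - e^{-\Omega(n)}$, the complement $\overline{G}$ has at most some absolute constant $C$ edges (choosing the constant in $10^{-1}/n$ appropriately makes $e(\overline G) \sim \mathrm{Bin}(\binom n2, 1-p)$ dominated by $\mathrm{Bin}(\binom n2, 10^{-1}/n)$, which has mean $< 1/20$, so $\mathbb P(e(\overline G) \geq C) \leq (1/20)^C/C!$ — wait, this is only polynomially small, not exponentially). This is the genuine obstacle: the event "$G$ is even-decomposable" is \emph{not} exponentially rare for all such $p$; one needs $p$ genuinely at $1 - \Theta(1/n)$ with a small enough constant so that $e(\overline G)$ is, with probability $1-e^{-\Omega(n)}$, smaller than the threshold needed to break the $K_n$-type obstruction. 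So the hard part is the combinatorial lemma: \emph{a graph $G$ on $n$ vertices with an even number of edges such that $\overline G$ has maximum degree $\leq d$ and fewer than $\varepsilon n$ edges is not even-decomposable, for suitable small constants} — intuitively because after any legal removal sequence the remaining graph is still almost-complete, hence still has essentially all degrees of a single fixed parity equal to the forbidden one, and the handful of available non-edges runs out before a full decomposition completes. Once that deterministic lemma is in hand, the probabilistic part is a one-line Chernoff estimate: for $p \geq 1 - \frac{10^{-1}}{n}$, the number of complement edges is $e^{-\Omega(n)}$-unlikely to reach the required threshold, and the maximum complement degree is $e^{-\Omega(n)}$-unlikely to exceed $d$, completing the proof.
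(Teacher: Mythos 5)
Your high-level intuition — that an almost-complete graph cannot be even-decomposed because independent sets are too small — is the right starting point, but there is a genuine gap, and your own ``cleanest route'' stalls on a miscalculation. You state that $\mathrm{Bin}\bigl(\binom{n}{2}, 10^{-1}/n\bigr)$ has mean $< 1/20$, but the mean is $\binom{n}{2}\cdot\frac{10^{-1}}{n} = \frac{n-1}{20} = \Theta(n)$. So $\overline G$ typically has $\Theta(n)$ edges, not $O(1)$, and the event ``$e(\overline G) \leq C$'' for any constant $C$ has probability tending to $0$, not to $1$. Consequently there is no hope of arguing that $G$ differs from $K_n$ in only a bounded number of edges; $\Theta(n)$ of the vertices will genuinely have a non-neighbour. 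Your proposed deterministic lemma (bounded complement degree and $< \varepsilon n$ complement edges implies non-even-decomposable) also remains a wish rather than a proof, because the ``tracking'' step is never made precise, and it is not clear it survives the fact that $\Theta(n)$ vertices do have non-edges available to them.

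The missing idea is cleaner than anything you sketch. Let $S$ be the set of vertices adjacent in $G$ to \emph{all} other vertices (equivalently, isolated in $\overline G$). Any $v\in S$ lies in no independent set of size $\geq 2$, so a simple admissible removal either removes exactly one vertex of $S$ (by itself, requiring its degree to be even) or removes no vertex of $S$. Moreover one cannot perform two consecutive singleton removals from $S$: all vertices of $S$ have the same degree (namely one less than the number of remaining vertices), and this parity flips after a singleton removal, so after removing one vertex of $S$ the rest all have odd degree. Hence between any two removals from $S$ at least one vertex outside $S$ must be removed, forcing $n - |S| \geq |S| - 1$; thus $|S| > n/2 + 1$ is an obstruction to even-decomposability, regardless of the parity of $e(G)$. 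The probabilistic input now only needs $e(\overline G) < n/4$ (which already forces $|S| > n/2$), and since $e(\overline G)$ is dominated by $\mathrm{Bin}\bigl(\binom{n}{2}, 10^{-1}/n\bigr)$ with mean $\approx n/20$, the Chernoff upper tail gives $\mathbb{P}(e(\overline G) > n/5) = e^{-\Omega(n)}$. That one observation about $S$ is what replaces your ``delicate tracking'' and makes the exponential bound go through.
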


\begin{proof}
    Let $S$ be the set of isolated vertices in $\overline{G}$ (i.e., vertices in $G$ that are adjacent to all other vertices). For any $v\in S$, the only independent set in $G$ containing $v$ is $\{v\}$. Hence vertices in $S$ may only be removed by themselves (in a simple admissible removal step). We cannot remove two vertices from $S$ by themselves in a row, as all vertices in $S$ have the same degree, which changes parity after the removal of the first vertex. Hence a vertex outside of $S$ must be removed for every vertex we remove from $S$, so $G$ is not even-decomposable if $|S|> \frac{n}{2}+1$. 

    We apply the Chernoff bound to $\mathrm{Bin}(\binom{n}{2}, \frac{10^{-1}}{n})$ which dominates $e(\overline{G})\sim \mathrm{Bin}(\binom{n}{2}, 1-p)$, giving that with probability $1 - e^{-\Omega(n)}$, there are at most $n/5$ edges in $\overline{G}$, in which case $|S|> \frac{n}{2}+1$ and $G$ is not even-decomposable.
\end{proof}

\section{Even-degenerate graphs} \label{sec:degenerate}

In this section, we prove Theorem \ref{thm:degenerate}. Recall from Definition \ref{def:degenerate} that a graph $G$ is called even-degenerate if the vertices of $G$ may be ordered as $v_1, \cdots, v_n$ such that $v_i$ has even degree in $G[v_i, \cdots, v_n]$ for all $i\leq n - 2$. We call $G$ non-even-degenerate otherwise.

In other words, we may remove vertices with even degree (in the remaining graph) one by one, and $G$ is even-degenerate if we can remove all but at most two vertices. 

This operation of removing a vertex of even degree does not change the parity of the number of remaining edges. Hence if $e(G)$ is even, the number of edges between the (at most) two remaining vertices is also even. Therefore this remaining graph must be empty, and we may in fact legally remove all vertices. On the other hand, if $e(G)$ is odd, there must be an edge between the two remaining vertices and no further legal removals could be made.

The lower bound in Theorem \ref{thm:degenerate} is rather easy.

\begin{prop} \label{prop:deg lower}
    If $n\geq 4$ and $G\sim G(n,1/2)$, then with probability at least $(1/2)^{2n-3}$, $G$ is non-even-degenerate.
\end{prop}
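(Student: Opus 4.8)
The plan is to exhibit an explicit, small-probability event that forces $G$ to be non-even-degenerate, and whose probability is exactly controllable. The key structural observation is that in any valid even-degenerate ordering, the very first vertex removed must have even degree in $G$ itself (since nothing has been removed yet). So the simplest obstruction is: make \emph{every} vertex of $G$ have odd degree. If all $n$ degrees are odd, then no vertex of even degree exists in $G$, so no legal first removal is possible, and $G$ is non-even-degenerate (here we use $n \geq 4$ so that the ``at most two vertices'' escape clause does not apply — with all degrees odd and $n \geq 3$ we cannot even make the first move).

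So it suffices to lower bound $\mathbb{P}(\text{all degrees of } G \text{ are odd})$. The standard way to do this is to reveal the edges in a convenient order. Fix an ordering $v_1, \dots, v_n$ and reveal, for $i = 1, \dots, n-1$ in turn, all pairs $v_i v_j$ with $j > i$ that have not yet been examined. After processing $v_1, \dots, v_{n-1}$, every pair has been revealed. The trick is that when we come to process $v_i$, the edges among $\{v_1, \dots, v_{i-1}\}$ incident to $v_i$ are already determined, but the $n - i$ edges from $v_i$ to $\{v_{i+1}, \dots, v_n\}$ are still free; we want the \emph{final} degree of $v_i$ (which, for $i \le n-1$, is fully determined after this step since all its remaining neighbours are among later vertices) to have the correct parity. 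Wait — $\deg(v_i)$ also receives contributions from later-processed vertices $v_j$ with $j<i$? No: every edge incident to $v_i$ is of the form $v_i v_j$, and is revealed at step $\min(i,j)$; so after step $i$ the degree of $v_i$ is \emph{not} yet final. Let me instead use the cleaner accounting: reveal pairs in the order by $\min$ index, and at step $i$ (revealing $v_i v_{i+1}, \dots, v_i v_n$) we get to choose the parity of $\deg(v_i)$ restricted to later vertices; combined with the already-fixed part, for each $i \le n-1$ there is (at least) one choice among the $n-i \ge 1$ free coordinates that makes $\deg(v_i)$ odd, an event of probability at least $(1/2)^{n-i}$ conditional on the past. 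Multiplying over $i = 1, \dots, n-1$ gives probability at least $(1/2)^{\sum_{i=1}^{n-1}(n-i)} = (1/2)^{\binom{n}{2}}$ that $v_1, \dots, v_{n-1}$ all have odd degree; and then $\deg(v_n)$ is automatically odd too since $\sum_i \deg(v_i) = 2e(G)$ is even and $n-1$ of the terms are odd forces... hmm, that needs $n-1$ odd, i.e. $n$ even — not quite. So instead one should just bound all $n$ degrees directly.

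To get the claimed bound $(1/2)^{2n-3}$ rather than $(1/2)^{\binom n2}$, the exponent must be much smaller, so we should \emph{not} force all degrees odd. Instead the right event, I believe, is the one used in Proposition~\ref{G(n, p) triv dir}-style arguments: force a large set of vertices to be ``universal-like'' or to have a fixed degree parity pattern that blocks removal for $2n-3$ revealed bits' worth of conditioning. Concretely, I would reveal the edges incident to $v_1$ (that is $n-1$ bits) and arrange $\deg(v_1)$ to be odd; this costs a factor $(1/2)^{n-1}$ (there is a choice making it odd). Then reveal the $n-2$ edges incident to $v_2$ among $\{v_3,\dots,v_n\}$ and arrange $\deg(v_2)$ odd, but \emph{also} we must handle that removing $v_1$ was forced to be impossible. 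Actually the slickest route: condition on the event that $v_1$ has odd degree \emph{and} every other vertex is adjacent to $v_1$ — then $\{v_1\}$ is not independent-free... no. Let me settle on the genuinely simple version that gives exactly $2^{-(2n-3)}$: reveal edges at $v_1$ (forcing odd degree: probability $\ge 2^{-(n-1)}$ since some choice of the $n-1$ bits works — in fact exactly $1/2$), then reveal edges at $v_2$ not already seen ($n-2$ of them), forcing $\deg(v_2)$ odd (probability $1/2$). Hmm $2^{-(n-1)} \cdot 2^{-(n-2)} = 2^{-(2n-3)}$. And I claim: if $\deg(v_1)$ and $\deg(v_2)$ are both odd \emph{and these are the two smallest-index vertices}, this alone does not block degeneracy. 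So the event must be: $\deg(v) $ is odd for \emph{all} $v$, but bounding that needs all $\binom n2$ bits. There is a tension I am not resolving cleanly here.

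The honest plan, then: use the event ``every vertex has odd degree,'' which by the reveal-by-min-index argument has probability at least $2^{-\binom n2}$ — but that is far weaker than $2^{-(2n-3)}$. Since the proposition claims $2^{-(2n-3)}$, the intended event must be cheaper: I expect it is ``$v_1, v_2$ both have odd degree \emph{to the rest of the graph} AND $v_1 v_2$ is a non-edge AND $\{v_1,v_2\}$ is a maximal-type obstruction'' — more precisely, reveal only the $2n-3$ bits incident to $v_1$ or $v_2$, force $v_1,v_2$ to have odd degree and be non-adjacent, and observe that then neither can be the first removed (odd degree) — but some $v_i$ with $i \ge 3$ could still be removed first. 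So the real blocking event is that \emph{all} of $v_1,\dots,v_n$ have odd degree after we also reveal... no. Given the difficulty, I will present the proof via the all-odd-degrees event but using the sharper count: reveal the neighbourhood of $v_i$ at step $i$ \emph{and accept odd degree with probability exactly $1/2$ for $i \le n-2$, while for $i = n-1, n$ the parities are forced}, which — because the last two are determined by handshake — actually only costs $2^{-(n-2)}$... still not $2^{-(2n-3)}$ unless an additional factor $2^{-(n-1)}$ enters. I therefore anticipate the \textbf{main obstacle} is identifying the precise low-cost event the authors use; the mechanics (reveal edges in a clever order, multiply conditional probabilities, invoke $n\ge 4$ to kill the two-vertex escape) are routine once that event is pinned down, and I would structure the write-up as: (1) state the blocking event, (2) verify it implies non-even-degeneracy, (3) lower-bound its probability by a telescoping product of conditional probabilities obtained by revealing edges incident to $v_1$ then $v_2$ (the two "expensive" vertices), totalling $(n-1)+(n-2) = 2n-3$ free bits each contributing a factor $\ge 1/2$.
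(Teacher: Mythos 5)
Your instinct that ``all degrees odd'' is the right blocking event is correct, and the mechanics you describe (reveal edges by smallest endpoint, multiply conditional probabilities) are the right mechanics, but you make an arithmetic slip that sends you down a dead end, and you never recover a complete argument. When you reveal the $n-i$ pairs $v_iv_{i+1},\dots,v_iv_n$ at step $i$ and want the final degree of $v_i$ to be odd, you are asking a sum of $n-i$ independent fair bits to land on a prescribed parity; that has conditional probability exactly $\tfrac12$ (for $n-i\ge 1$), not $(1/2)^{n-i}$. Multiplying $\tfrac12$ over $i=1,\dots,n-1$, with $\deg(v_n)$ then forced by the handshake identity, gives $2^{-(n-1)}$ rather than the $2^{-\binom n2}$ you computed --- and $2^{-(n-1)}$ already beats the target. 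Equivalently, Lemma~\ref{uniform degree dist} says the vector of degree parities is uniform on the hyperplane $\{x\in\mathbb{F}_2^{[n]}:\sum_i x_i=0\}$, so for $n$ even the all-odd event has probability exactly $2^{-(n-1)}$.

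You also correctly observe that for $n$ odd the all-odd event is impossible (the handshake obstruction), but at that point you give up instead of patching the event. The patch the paper uses: fix $u$ and demand that $u$ is isolated \emph{and} every other vertex has odd degree. Conditioned on $u$ being isolated (cost $2^{-(n-1)}$), the graph $G-u$ is an independent $G(n-1,1/2)$ on an even number of vertices, so all its degrees are odd with probability $2^{-(n-2)}$, and isolating $u$ does not change those degrees; total cost $2^{-(2n-3)}$. On this event, the only legal first removal is $u$ itself, after which $n-1\ge 3$ vertices of odd degree remain and no further removal is possible, so $G$ is non-even-degenerate. In short, the two missing pieces are the correct accounting of conditional probabilities (a parity, not a point mass, is what you are forcing) and the isolated-vertex modification for odd $n$; without them the proposal does not constitute a proof.
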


\begin{proof}
    Assume first that $n$ is even. Fix $v\in V(G)$. Conditional on the graph $G-v$, there is a unique way to define the edges between $v$ and $V(G)\setminus \{v\}$ such that the degree of each vertex in $G$ is odd. Hence, with probability $(1/2)^{n-1}$, every vertex in $G$ has odd degree, in which case $G$ is non-even-degenerate.

    Assume now that $n$ is odd, and let $u\in V(G)$. With probability $(1/2)^{2n-3}$, $u$ is an isolated vertex and all other vertices have odd degree in $G$, in which case $G$ is non-even-degenerate.
\end{proof}

\begin{remark}
    There is a different construction also showing that with probability at least $(1/2)^{2n-2}$, $G$ is non-even-degenerate. Fix distinct vertices $u$ and $v$. Let $A$ be the event that $e(G)$ is even, $uv\in E(G)$ and $uw,vw\not \in E(G)$  holds for all $w\in V(G)\setminus \{u,v\}$. Since $n\geq 4$, we have $\mathbb{P}(A)=(1/2)^{2n-2}$. But observe that if $A$ holds, then $G$ is non-even-degenerate. Indeed, whichever of $u$ and $v$ comes first in the ordering, it must have degree one in the remaining graph. Hence, these two vertices must come last in a valid ordering. However, $e(G)$ is even and $uv$ is an edge, so a valid such ordering cannot exist by the observation made before Proposition \ref{prop:deg lower}.
\end{remark}

The rest of this section is devoted to proving the upper bound in Theorem \ref{thm:degenerate}. That is, we want to show that the probability that $G(n,1/2)$ is non-even-degenerate is at most $e^{-\Omega(n)}$.

\begin{df}
    Let $G\sim G(n, 1/2)$, and let $c_n = \mathbb{P}(G\text{ is non-even-degenerate})$. 
\end{df}

We shall first prove that $c_n = o(1)$ in Subsection \ref{o(1) bound}, which we will then promote to an exponential bound $c_n = e^{-\Omega(n)}$ in Subsection \ref{exp bound}. Crucially, the argument for the exponential bound relies on having shown that $c_n = o(1)$. We start by establishing some basic properties used in both arguments. 

\begin{lem} \label{uniform degree dist}
    For $G\sim G([n], 1/2)$, let $D\in\mathbb{F}_2^{[n]}$ be the $n$-tuple whose $i^\text{th}$ coordinate is the parity of $\deg(i)$. Then $D$ is distributed uniformly amongst $n$-tuples in $\mathbb{F}_2^{[n]}$ of sum $0$. 
\end{lem}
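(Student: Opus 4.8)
The plan is to exhibit an explicit bijection on $\mathbb{F}_2^{E(K_n)}$ that changes a chosen coordinate of the degree-parity vector $D$ while fixing all others, and to use it to show all feasible values of $D$ (namely all $n$-tuples summing to $0$) are equiprobable. First I would record the elementary fact that $\sum_{i\in[n]}\deg(i)=2e(G)$, so that the sum of the coordinates of $D$ is always $0$ in $\mathbb{F}_2$; hence $D$ is supported on the subspace $W=\{x\in\mathbb{F}_2^{[n]}:\sum_i x_i=0\}$, which has size $2^{n-1}$. Since $G\sim G([n],1/2)$ is the uniform distribution on $\mathbb{F}_2^{E(K_n)}$, it suffices to show that every fibre $D^{-1}(w)$ for $w\in W$ has the same cardinality, equivalently that $D$ pushes the uniform measure forward to the uniform measure on $W$.

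The key step is a symmetry argument. Fix two distinct vertices $i,j\in[n]$ and let $\phi_{ij}:\mathbb{F}_2^{E(K_n)}\to\mathbb{F}_2^{E(K_n)}$ be the map that toggles the single edge $ij$, i.e. $\phi_{ij}(G)=G+K_{\{i,j\}}$ where $K_{\{i,j\}}$ is the image of $K_2$ on $\{i,j\}$. This is an involution, hence a bijection, and it preserves the uniform distribution. Toggling the edge $ij$ flips the parity of $\deg(i)$ and of $\deg(j)$ and leaves the parity of every other degree unchanged; thus $D(\phi_{ij}(G))=D(G)+e_i+e_j$, where $e_i,e_j$ are standard basis vectors. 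Therefore $|D^{-1}(w)|=|D^{-1}(w+e_i+e_j)|$ for all $w$ and all $i\neq j$. Since the vectors $\{e_i+e_j:i\neq j\}$ span $W$ and act transitively on it by translation, all fibres over $W$ have equal size, which gives the claim; combined with the fact that $D$ is always in $W$, this shows $D$ is uniform on $W$, i.e. uniform among $n$-tuples of sum $0$.

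I do not expect any real obstacle here; the only things to be careful about are (i) stating precisely that translations by $e_i+e_j$ generate the full translation action on $W$ (e.g. because $e_1+e_k$ for $k=2,\dots,n$ already form a basis of $W$), and (ii) noting that we are using the bijection only to compare fibre sizes, so no measure-theoretic subtlety arises since everything is finite and uniform. One could alternatively give a one-line computation: condition on the edges not incident to vertex $n$, and observe that the $n-1$ edges incident to $n$ are independent fair coins whose partial parities realise every value in a coset, forcing conditional uniformity of $D$ restricted to $W$; but the involution argument above is cleaner and is what I would write.
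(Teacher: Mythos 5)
Your proof is correct and uses essentially the same argument as the paper: toggling a single edge $ij$ is an involution preserving the uniform distribution on graphs, and it shifts $D$ by $e_i+e_j$, so all fibres of $D$ over the sum-zero subspace are equinumerous. The paper states this more tersely (asserting it suffices to show $D$ and $D+e_i+e_j$ are identically distributed) while you spell out explicitly that the vectors $e_i+e_j$ span $W$ and hence the translations act transitively on it, but the underlying idea is identical.
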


\begin{proof}
    For any $i\in [n]$, let $e_i\in\mathbb{F}_2^{[n]}$ be the basis vector having $1$ on the $i^{\text{th}}$ entry and $0$ elsewhere. 
    
    It suffices to show that $D$ and $D + e_i + e_j$ are identically distributed for all $i\neq j\in [n]$. To show this we consider the graph $G'$ obtained from $G$ by flipping $ij$. That is $ij$ is an edge in $G'$ iff it is not an edge in $G$, and all other edges are unchanged from $G$ to $G'$. We note that $G'\sim G(n, 1/2)$ is identically distributed to $G$ and $D + e_i + e_j$ is the $n$-tuple of the parities of the degrees in $G'$. Hence $D$ and $D + e_i + e_j$ are identically distributed. 
\end{proof}

\begin{lem} \label{stochastic forgetfulness}
    Let $G\sim G([n], 1/2)$, conditioned on $\deg(i)$ having parity $d_i$ for each $i = 1, \cdots, n$ (where $\sum_i d_i$ is even). Then for any $i\in V(G)$, we have $G - i\sim G(n - 1, 1/2)$. 
\end{lem}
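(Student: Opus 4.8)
The plan is to fix the vertex $i$ to be deleted — by relabelling we may assume $i=n$ — and to exhibit an explicit bijection between graphs on $[n-1]$ and graphs on $[n]$ whose degree parities are the prescribed $(d_1,\dots,d_n)$. Write any graph $G$ on $[n]$ as the pair $(G',x)$, where $G'=G-n$ is a graph on $[n-1]$ and $x\in\mathbb{F}_2^{[n-1]}$ records which pairs $\{n,j\}$ are edges. For $j\in[n-1]$ the parity of $\deg_G(j)$ equals $\deg_{G'}(j)+x_j$ modulo $2$, while the parity of $\deg_G(n)$ equals $\sum_{j\in[n-1]} x_j$.

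First I would observe that the conditioning event — that $\deg_G(j)$ has parity $d_j$ for every $j\in[n-1]$ — forces $x_j\equiv d_j-\deg_{G'}(j)\pmod 2$ for each $j$, so $x$ is a function of $G'$ alone (once the $d_j$ are fixed). Next I would check that, given this choice of $x$, the remaining condition on vertex $n$ holds automatically: indeed
\[
\sum_{j\in[n-1]} x_j \equiv \sum_{j\in[n-1]}\bigl(d_j-\deg_{G'}(j)\bigr) \equiv \sum_{j\in[n-1]} d_j - 2e(G') \equiv \sum_{j\in[n-1]} d_j \equiv d_n \pmod 2,
\]
where the last step uses the hypothesis that $\sum_{j=1}^n d_j$ is even. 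Hence the map sending $G'$ to the graph $G=(G',x)$ with $x$ defined as above is a well-defined injection (one recovers $G'$ as $G-n$) from graphs on $[n-1]$ into the set $\mathcal{G}$ of graphs on $[n]$ with degree parities $(d_1,\dots,d_n)$; and it is surjective onto $\mathcal{G}$, since for $G\in\mathcal{G}$ the star vector of $G$ at $n$ is forced to equal exactly the $x$ associated to $G-n$. In particular $|\mathcal{G}|=2^{\binom{n-1}{2}}$.

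Finally, since $G\sim G([n],1/2)$ is uniform on all graphs on $[n]$, conditioning on the event $\{G\in\mathcal{G}\}$ makes $G$ uniform on $\mathcal{G}$; pushing this forward through the bijection (equivalently, through $G\mapsto G-n$) shows that $G-n$ is uniform on all $2^{\binom{n-1}{2}}$ graphs on $[n-1]$, i.e.\ $G-n\sim G(n-1,1/2)$. There is no substantial obstacle here: the only point requiring care is the parity bookkeeping at vertex $n$, where one must invoke the assumption $\sum_i d_i\equiv 0$ to see that the conditioning event imposes no constraint beyond those already used to pin down $x$; this is also consistent with Lemma \ref{uniform degree dist}, which guarantees the event has positive probability.
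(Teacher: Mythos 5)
Your proof is correct and is essentially the same as the paper's: both observe that each graph on $[n-1]$ extends uniquely to a graph on $[n]$ meeting the degree-parity constraints, so conditioning and deleting $i$ gives the uniform distribution on graphs on $[n-1]$. You simply spell out the bijection and the parity check at vertex $n$ (using $\sum_i d_i\equiv 0$) in more detail than the one-line argument in the paper.
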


\begin{proof}
    For each graph $H$ on vertex set $[n]\backslash\{i\}$, there is a unique way of extending it to a graph $G$ on vertex set $[n]$ satisfying the degree parity constraints. 
\end{proof}

Roughly speaking, the following proposition shows that the probability that $G(n,1/2)$ is not even-degenerate is decreasing in $n$.

\begin{prop} \label{decreasing c_n}
    For any $n$, we have $c_{n + 1}\leq (1 - 2^{-n})c_n + 2^{-n}$. 
\end{prop}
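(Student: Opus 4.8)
\textbf{Proof plan for Proposition \ref{decreasing c_n}.}

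The plan is to couple $G(n+1,1/2)$ with $G(n,1/2)$ by deleting a vertex, but in a way that keeps that deletion \emph{legal} (i.e.\ the deleted vertex has even degree) with probability $1-2^{-n}$; whenever the deletion is legal, even-degeneracy of the smaller graph lifts to the bigger one, and whenever it is not, we crudely bound the probability by $1$. First I would take $G\sim G([n+1],1/2)$ and condition on the tuple $D\in\mathbb{F}_2^{[n+1]}$ of degree parities. By Lemma \ref{uniform degree dist}, $D$ is uniform among tuples of even sum, so $\mathbb{P}(D_{n+1}=0)=\tfrac12$; more precisely, I want to fix a single vertex $w=n+1$ and compute $\mathbb{P}(\deg_G(w)\text{ is even})$. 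Conditioning on $G-w$, the degree parity of $w$ is determined by the parity of $\sum_{u}G_{uw}$ over the $n$ independent fair coins $G_{uw}$, so $\deg_G(w)$ is even with probability exactly $1/2$ --- but this gives the wrong constant. The right move is instead: among the $n+1$ vertices, the probability that \emph{at least one} of them has even degree is $1-2^{-n}$, since by Lemma \ref{uniform degree dist} the all-odd parity tuple (which exists only when $n+1$ is even) has probability $2^{-n}$, and when $n+1$ is odd there is \emph{always} a vertex of even degree (the parities sum to $0$, so they cannot all be $1$). So with probability $1-2^{-n}$ there is a vertex $v$ of even degree in $G$.

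With that in hand, the argument runs as follows. Let $E$ be the event that $G$ has a vertex of even degree; $\mathbb{P}(E)\ge 1-2^{-n}$ by the above. On $E^c$ I bound $\mathbb{P}(G\text{ non-even-degenerate}\mid E^c)\le 1$. On $E$, pick (by any fixed rule) a vertex $v=v(G)$ of even degree. Removing $v$ is a legal first step in an even-degenerate ordering, so $G$ is even-degenerate whenever $G-v$ is. Hence
\begin{align*}
c_{n+1}=\mathbb{P}(G\text{ non-even-deg.})
&\le \mathbb{P}(E^c)\cdot 1+\mathbb{P}(E)\cdot\mathbb{P}(G-v\text{ non-even-deg.}\mid E)\\
&\le 2^{-n}+(1-2^{-n})\,\mathbb{P}(G-v\text{ non-even-deg.}\mid E).
\end{align*}
It remains to show $\mathbb{P}(G-v\text{ non-even-deg.}\mid E)\le c_n$, i.e.\ that conditionally on $E$ (and on the choice of $v$) the graph $G-v$ is distributed as $G(n,1/2)$. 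This is where Lemma \ref{stochastic forgetfulness} enters: condition further on the full degree-parity tuple $D$ of $G$ (an arbitrary tuple of even sum with at least one zero coordinate, so that $v$ is well-defined); then $G-v\sim G(n,1/2)$ by that lemma, independently of $D$. Averaging over all such $D$ gives $\mathbb{P}(G-v\text{ non-even-deg.}\mid E)=c_n$, and the proposition follows.

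The one delicate point is making the choice of $v$ a legitimate measurable function that does not bias the conditional law of $G-v$: the cleanest way is to first condition on $D$, which picks out the set of even-degree vertices purely from $D$, let $v$ be, say, the smallest-indexed such vertex, and only then invoke Lemma \ref{stochastic forgetfulness} to see that $G-v\sim G(n,1/2)$ regardless of which $D$ we conditioned on. I expect the main obstacle to be presentational rather than mathematical --- namely being careful that the event $E$ and the vertex $v$ are functions of $D$ alone, so that Lemma \ref{stochastic forgetfulness} applies verbatim --- together with the small parity bookkeeping that pins the constant at exactly $2^{-n}$ (the case split on $n+1$ even versus odd in computing $\mathbb{P}(E)$).
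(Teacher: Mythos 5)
Your proposal is correct and follows essentially the same route as the paper: condition on the degree-parity tuple $D$ (uniform on even-sum tuples by Lemma \ref{uniform degree dist}), split on whether some vertex has even degree (failing with probability at most $2^{-n}$), and in the good case delete a canonically chosen even-degree vertex and invoke Lemma \ref{stochastic forgetfulness} to see the remainder is distributed as $G(n,1/2)$. The only difference is that you spell out the measurability/choice-of-$v$ point and the $n+1$ parity bookkeeping in more detail than the paper does; the substance is identical.
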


\begin{proof}
    Let $G\sim G([n + 1], 1/2)$. We query the parities of all degrees in $G$, on the basis of which we split into two cases. 

    \begin{itemize}
        \item Case 1: $\deg(i)$ is even for some vertex $i$. By Lemma \ref{uniform degree dist}, this case takes place with probability at least $1 - 2^{-n}$. 

        By Lemma \ref{stochastic forgetfulness}, $G - i\sim G(n, 1/2)$. Thus $G - i$ is non-even-degenerate with probability $c_n$. Note that if $G - i$ is even-degenerate, then so is $G$, as we may first remove $i$ and then follow through with the removal sequence for $G - i$. Hence in this case, $G$ is non-even-degenerate with probability at most $c_n$. 

        \item Case 2: all degrees are odd. By Lemma \ref{uniform degree dist}, this case takes place with probability at most $2^{-n}$. 
    \end{itemize}
    Combining the conditional probabilities, we obtain the desired bound on $c_{n + 1}$. 
\end{proof}

Both arguments (for $c_n = o(1)$ and $c_n = e^{-\Omega(n)}$) shall study how the likelihood of even-degeneracy of a graph $G$ changes if we condition on the outcome of an induced subgraph $G[S]$. The following definition sets up the necessary notation to study this question.

\begin{df} \label{def:linked}
    Given deterministic vertex sets $V, V'$ and $S\subseteq V\cap V'$, we call a pair of random graphs $(G, G')$ \emph{linked on $S$} if $G\sim G(V, 1/2), G'\sim G(V', 1/2)$, $G$ and $G'$ are conditioned on $G[S] = G'[S]$ and are independent otherwise. When $|S| = s, |V| = |V'| = m + s$, we denote by $b_{m, s}$ the probability that both $G$ and $G'$ are non-even-degenerate.
    
    We call $(G, G')$ \emph{parity-linked on $S$} if we further condition on $e(G)\equiv e(G') \mod 2$. When $|S| = s, |V| = |V'| = m + s$, we denote by $b_{m, s}^*$ the probability that both $G$ and $G'$ are non-even-degenerate.
\end{df}

\begin{example}
    If $V, V'\subseteq U$ and $G\sim G(U, 1/2)$, then $G[V], G[V']$ are linked on $V\cap V'$. 
\end{example}

As the notation indicates, the probabilities $b_{m, s}, b_{m, s}^*$ do not depend on the deterministic choices of the vertex sets $V, V', S$, only their sizes. In the next subsection, we study $b_{1,n-1}^*$ in order to relate $c_{n+1}$ to $c_n$. This will result in the bound $c_n=o(1)$. In Subsection \ref{exp bound} we study $b_{n-o(n),o(n)}$ in order to relate $c_{2n}$ to $c_n$. This will result in the bound $c_n = e^{-\Omega(n)}$. 

\subsection{The $o(1)$ bound} \label{o(1) bound}

In this subsection we prove that $c_n=o(1)$, i.e., that a random graph is almost surely even-degenerate. Although the proof is short, we think it is helpful to give a very brief overview.

We query the parities of the degrees of distinct vertices $u$ and $v$. The key observation is that if both $u$ and $v$ have even degree, then we have two different ways to certify that $G$ is even-degenerate: it suffices to show that either $G-u$ or $G-v$ is even-degenerate. The most involved part of the argument is to show that these two events (i.e. the event that $G-u$ is even-degenerate and the event that $G-v$ is even-degenerate) are not too closely correlated. This amounts to bounding the probability $b_{1,n-1}^*$ from Definition \ref{def:linked}.

\begin{lem} \label{s in terms of c}
    For any $n\geq 2$, we have $b_{1, n - 1}^*\leq c_n - c_n(1 - 2c_n)/n$. 
\end{lem}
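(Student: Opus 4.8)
The plan is to relate $b_{1,n-1}^*$ to $c_n$ by carefully using the fact that having two vertices of even degree gives two independent-ish ``chances'' to certify even-degeneracy. Let $(G,G')$ be parity-linked on a set $S$ with $|S|=n-1$, so $V=S\cup\{x\}$ and $V'=S\cup\{x'\}$ with $x\neq x'$, $G\sim G(V,1/2)$, $G'\sim G(V',1/2)$, conditioned on $G[S]=G'[S]$ and $e(G)\equiv e(G')\bmod 2$, and independent otherwise. I would first observe that $G[S]=G'[S]$ together with the parity condition means that $\deg_G(x)$ and $\deg_{G'}(x')$ have the \emph{same} parity (since $e(G)=e(G[S])+\deg_G(x)$ and similarly for $G'$). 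Split into two cases according to that common parity.

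If $\deg_G(x)$ (equivalently $\deg_{G'}(x')$) is odd, I would just bound crudely: the probability that both $G$ and $G'$ are non-even-degenerate in this case is at most the probability that $G$ is non-even-degenerate, which is $c_n$; and this case happens with probability roughly $1/2$. If instead $\deg_G(x)$ is even, then removing $x$ is a legal first step, so $G$ is even-degenerate whenever $G-x$ is, and likewise $G'$ is even-degenerate whenever $G'-x'$ is. Now the key point: conditioned on everything queried so far (the graph on $S$ and the parity of the relevant degrees), by Lemma \ref{stochastic forgetfulness} we have $G-x\sim G(S,1/2)$ and $G'-x'\sim G(S,1/2)$, and crucially $G-x$ and $G'-x'$ are \emph{independent} of each other (the only coupling between $G$ and $G'$ was through $G[S]$ and the parity constraint, both of which we have now conditioned on, and the edges from $x$ to $S$ are independent of the edges from $x'$ to $S$). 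Hence the probability that \emph{both} $G-x$ and $G'-x'$ are non-even-degenerate is $c_n^2$ — wait, one must be slightly careful that $G-x$ and $G'-x'$ live on the same vertex set $S$ but are genuinely independent given the conditioning; this independence is exactly what Definition \ref{def:linked}'s ``independent otherwise'' clause provides once we've fixed $G[S]$ and the degree parities. So in this case the probability both $G,G'$ are non-even-degenerate is at most $c_n^2$.

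Combining, $b_{1,n-1}^* \le \tfrac12 c_n + \tfrac12 c_n^2$ roughly — but this is weaker than the claimed bound $c_n - c_n(1-2c_n)/n = c_n(1 - (1-2c_n)/n)$, which for large $n$ is close to $c_n$ from below. So the crude ``$1/2$'' split is too lossy and I need the correct probabilities. The honest computation: by Lemma \ref{uniform degree dist} applied to $G$ (whose degree parity vector on $V$ is uniform among sum-zero vectors), and keeping track of the parity-linking, one finds that the probability the common parity is even is not $1/2$ but something like $\tfrac12 + \tfrac{c}{n}$-type corrections — more precisely one should directly compute, using that $D_G$ is uniform on sum-zero $n$-tuples, the probability that $x$ has even degree, which is $\tfrac12(1+2^{-(n-1)})$ or similar, and similarly handle the conditioning on $e(G)\equiv e(G')$. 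Actually I suspect the clean way is: condition first on $G[S]$, then the parities of $\deg_G(x)$ and $\deg_{G'}(x')$ are each uniform in $\mathbb{F}_2$ and \emph{independent} before imposing $e(G)\equiv e(G')$; imposing that constraint forces them equal, and each value (both even / both odd) is equally likely — giving exactly $1/2$ each. Then in the ``both odd'' case I should \emph{not} bound by $c_n$ but rather use a sharper estimate, perhaps that even in the all-relevant-degree-odd scenario one of $G-x'$s... hmm.

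The main obstacle, I expect, is getting the exact constant $1/n$: the crude argument gives $b_{1,n-1}^* \lesssim \tfrac12 c_n + \tfrac12 c_n^2$, whereas the target $c_n(1-(1-2c_n)/n)$ requires showing the ``odd'' case contributes less than the naive $\tfrac12 c_n$. The fix must come from a more careful accounting within the odd case: there, $x$ has odd degree in $G$, but one can still try to remove \emph{some other} even-degree vertex of $G$ first, and the conditional distribution of $G$ restricted to $S$ (after removing such a vertex, or just $G[S]$ itself) is again $G(n-1,1/2)$-like, letting one recurse or at least extract a factor. Alternatively — and I think this is the intended route — one shows that conditional on the ``both odd'' event, $G$ and $G'$ are \emph{still} close to independent, so the joint non-even-degeneracy probability in that case is $\approx c_{n}^2$-ish times a correction, and it's the \emph{$n$ vertices that could have been the even one} that produce the $1/n$ savings. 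Concretely: $G$ being non-even-degenerate but $G$ having a vertex of even degree other than $x$ is very likely, and conditioning on which vertex — there are $\Theta(n)$ candidates — decouples $G$ from $G'$. I would write this up by carefully running the two-case split, using Lemmas \ref{uniform degree dist} and \ref{stochastic forgetfulness} to establish the relevant conditional independences, bounding the ``both even'' contribution by $\tfrac12 c_n^2$ and the ``both odd'' contribution by $\tfrac12 c_n - \tfrac{1}{2}\cdot\tfrac{2}{n} c_n(1-2c_n) + \cdots$ via the decoupling-over-the-even-vertex idea, and then simplifying to reach $b_{1,n-1}^* \le c_n - c_n(1-2c_n)/n$.
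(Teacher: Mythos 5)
There is a genuine gap — in fact a conceptual error — at the heart of your proposal. You claim that after conditioning on $G[S]=G'[S]$ (and on the degree parities), the graphs $G-x$ and $G'-x'$ are ``genuinely independent'' copies of $G(S,1/2)$, and you therefore bound the joint non-even-degeneracy probability in the ``both even'' case by $c_n^2$. But $G-x = G[S]$ and $G'-x' = G'[S]$, and these are \emph{the same graph} by the very definition of the parity-linked coupling; they are not two independent copies. The ``independent otherwise'' clause in Definition~\ref{def:linked} only makes the edges from $x$ to $S$ independent of the edges from $x'$ to $S$ — everything on $S$ itself is shared. So the best you can say in the ``both even'' case is that both $G$ and $G'$ being non-even-degenerate forces $G[S]$ to be non-even-degenerate, contributing about $\tfrac12 c_{n-1}$, while the ``both odd'' case contributes about $\tfrac12 c_n$. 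That combined bound is nowhere near the target $c_n(1-(1-2c_n)/n)$, which is only a $(1-\Theta(1/n))$ improvement over the trivial $b^*_{1,n-1}\le c_n$; and indeed you yourself notice the numbers don't close and end the proposal with speculation rather than a proof.

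The actual argument in the paper is of an entirely different type and the $1/n$ factor has a different origin than you guessed. One builds a chain $G^{(0)},G^{(1)},\dots,G^{(n)}$ of random graphs on $[n]$, each obtained from the previous one by resampling the star at vertex $m$ (preserving $\deg(m)$'s parity). Consecutive graphs $G^{(m-1)},G^{(m)}$ are exactly a parity-linked pair on $[n]\setminus\{m\}$, so the probability that $G^{(m-1)}$ is non-even-degenerate while $G^{(m)}$ is even-degenerate equals $c_n-b^*_{1,n-1}$. Meanwhile the endpoints $G^{(0)}$ and $G^{(n)}$ are independent (given equal edge parity), from which the probability $G^{(0)}$ is bad but $G^{(n)}$ is good is at least $c_n-2c_n^2$. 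A union bound over the $n$ steps of the chain then gives $c_n-2c_n^2 \le n(c_n-b^*_{1,n-1})$, which rearranges to the stated inequality. The $1/n$ comes precisely from the length of the chain, not from counting candidate even-degree vertices as your proposal suggests.
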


\begin{proof}
    We inductively define random graphs $G^{(0)}, \cdots, G^{(n)}$ on vertex set $[n]$. 
    \begin{itemize}
        \item We take $G^{(0)}\sim G(n, 1/2)$. 
        \item For $m\in [n]$, we take $G^{(m)}\sim G(n, 1/2)$ conditioned on $G^{(m)} - m = G^{(m - 1)} - m$ and $e(G^{(m)})\equiv e(G^{(m - 1)}) \mod 2$. In other words, we form $G^{(m)}$ from $G^{(m - 1)}$ by uniformly resampling all edges incident to the vertex $m$, conditioned on leaving the parity of $\deg(m)$ unchanged. 
    \end{itemize}
    With this definition, $G^{(m - 1)}$ and $G^{(m)}$ are graphs on $[n]$ parity-linked on $[n]\backslash\{m\}$ for all $m\in [n]$; and $G^{(0)}, G^{(n)}$ are independent conditioned on $e(G^{(0)})\equiv e(G^{(m)}) \mod 2$. Hence, the probability that $G^{(m-1)}$ is non-even-degenerate and $G^{(m)}$ is even-degenerate is $c_n-b_{1,n-1}^*$ (as, by definition, the probability that both $G^{(m-1)}$ and $G^{(m)}$ are non-even-degenerate is $b_{1,n-1}^*$). On the other hand, we have
    \begin{align*}
        &\quad\mathbb{P}(G^{(0)} \text { is non-even-degenerate and } G^{(n)} \text{ is even-degenerate})\\
        &= c_n - \mathbb{P}(G^{(0)},  G^{(n)} \text { are both non-even-degenerate})\\
        &= c_n - \sum_{s\in \mathbb{F}_2} \mathbb{P}\left(G^{(0)},  G^{(n)} \text { are both non-even-degenerate and }e(G^{(0)})\equiv e(G^{(n)})\equiv s \mod 2\right)\\
        &= c_n - \sum_{s\in \mathbb{F}_2} \frac{1}{2}\cdot \mathbb{P}\left(G^{(0)},  G^{(n)} \text { are both non-even-degenerate } | \hspace{2mm} e(G^{(0)})\equiv e(G^{(n)})\equiv s \mod 2\right) \\
        &= c_n - \sum_{s\in \mathbb{F}_2} \frac{1}{2}\cdot \mathbb{P}\left(G^{(0)}\text { is non-even-degenerate } | \hspace{2mm} e(G^{(0)})\equiv s \mod 2\right) \\
        &\qquad\qquad\qquad\cdot\mathbb{P}\left(G^{(n)}\text { is non-even-degenerate } | \hspace{2mm} e(G^{(n)})\equiv s \mod 2\right)\\
        &= c_n - \sum_{s\in \mathbb{F}_2} 2\cdot\mathbb{P}\left(G^{(0)}\text { is non-even-degenerate and } e(G^{(0)})\equiv s \mod 2\right) \\
        &\qquad\qquad\qquad\cdot\mathbb{P}\left(G^{(n)}\text { is non-even-degenerate and }e(G^{(n)})\equiv s \mod 2\right)\\
        &\geq c_n - \sum_{s\in \mathbb{F}_2} 2\cdot\mathbb{P}\left(G^{(0)}\text { is non-even-degenerate and } e(G^{(0)})\equiv s \mod 2\right)\cdot c_n \\
        &= c_n - 2c_n^2. 
    \end{align*}

    Whenever $G^{(0)}$ is non-even-degenerate whilst $G^{(n)}$ is even-degenerate, there must be some $m \in [n]$ for which $G^{(m-1)}$ is non-even-degenerate and $G^{(m)}$ is even-degenerate. Taking the union bound, we have
    \begin{align*}
        c_n - 2c_n^2
        &=\mathbb{P}(G^{(0)} \text { is non-even-degenerate and } G^{(n)} \text{ is even-degenerate}) \\
        &\leq \sum_{m\in [n]} \mathbb{P}(G^{(m-1)} \text { is non-even-degenerate and } G^{(m)} \text{ is even-degenerate})=n(c_n-b_{1,n-1}^*).
    \end{align*}
    Rearranging this inequality, we obtain $b_{1, n - 1}^*\leq c_n - c_n(1 - 2c_n)/n$. 
\end{proof}

\begin{lem} \label{o(1) bound lem}
    For any $n\geq 2$, we have $c_{n + 1}\leq \frac{1}{4}b_{1, n - 1}^* + \left(\frac{3}{4} - 2^{-n}\right)c_n + 2^{-n}$. 
\end{lem}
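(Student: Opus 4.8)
The plan is to fix two distinct vertices $u,v\in[n+1]$ and, for $G\sim G([n+1],\tfrac12)$, query the parities of $\deg_G(u)$ and $\deg_G(v)$, splitting into the four resulting cases. Since $n+1\geq 3$, Lemma~\ref{uniform degree dist} (applied on $n+1$ vertices) shows that the pair of parities is uniform on $\mathbb{F}_2^2$, so each case occurs with probability exactly $\tfrac14$; it then suffices to bound $\mathbb{P}(G\text{ is non-even-degenerate}\mid\cdot)$ by $b_{1,n-1}^*$ when both degrees are even, by $c_n$ when exactly one is even, and by $(1-2^{2-n})c_n+2^{2-n}$ when both are odd, and to average.

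If $u$ and $v$ both have even degree, then $G$ is even-degenerate as soon as one of $G-u$ or $G-v$ is (delete the even-degree vertex first and continue), so $\mathbb{P}(G\text{ n.e.d.}\mid\cdot)\leq\mathbb{P}(G-u\text{ and }G-v\text{ both n.e.d.}\mid\cdot)$. The key point — and the main thing to check carefully — is that, conditioned on this event, the pair $(G-v,G-u)$ is distributed exactly as a pair of graphs parity-linked on $S:=[n+1]\setminus\{u,v\}$, a set of size $n-1$, so that this probability equals $b_{1,n-1}^*$ by Definition~\ref{def:linked}. To see this, write $E(G)$ as $E(G[S])$ together with the indicator vectors $x,y\in\mathbb{F}_2^S$ of the edges from $u$ and from $v$ to $S$ and a bit $\varepsilon\in\mathbb{F}_2$ recording whether $uv$ is an edge; the conditioning is $\sum_i x_i+\varepsilon=0$ and $\sum_i y_i+\varepsilon=0$, which keeps $G[S]$ uniform and independent of $(x,y,\varepsilon)$ and makes $(x,y)$ uniform subject only to $\sum_i x_i\equiv\sum_i y_i\pmod 2$. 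Since $G-v=G[S]+x$ and $G-u=G[S]+y$, and since $\sum_i x_i\equiv\sum_i y_i$ is exactly the condition $e(G-v)\equiv e(G-u)\pmod 2$ given the common induced subgraph $G[S]$, this matches Definition~\ref{def:linked} precisely.

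If exactly one of $u,v$ has even degree, say $\deg_G(u)$, then $G$ is even-degenerate whenever $G-u$ is, so $\mathbb{P}(G\text{ n.e.d.}\mid\cdot)\leq\mathbb{P}(G-u\text{ n.e.d.}\mid\cdot)$; conditioning further on the full degree-parity vector $D$ of $G$, Lemma~\ref{stochastic forgetfulness} gives $G-u\sim G(n,\tfrac12)$, and since this distribution does not depend on $D$ it persists under the weaker conditioning of the case, giving the bound $c_n$. If both $u$ and $v$ have odd degree, we additionally query all other degree parities: if some $i\in S$ has even degree, removing it first and arguing as in the previous case gives the bound $c_n$; otherwise every vertex of $G$ has odd degree, an event of probability at most $2^{-n}$ unconditionally (Lemma~\ref{uniform degree dist} on $n+1$ vertices), hence conditional probability $q\leq 2^{2-n}$. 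Splitting on this dichotomy, $\mathbb{P}(G\text{ n.e.d.}\mid\cdot)\leq (1-q)c_n+q=c_n+q(1-c_n)\leq (1-2^{2-n})c_n+2^{2-n}$, using $c_n\leq 1$.

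Averaging the four bounds with weight $\tfrac14$ yields
\[
c_{n+1}\leq \tfrac14 b_{1,n-1}^*+\tfrac14 c_n+\tfrac14 c_n+\tfrac14\bigl((1-2^{2-n})c_n+2^{2-n}\bigr)=\tfrac14 b_{1,n-1}^*+\Bigl(\tfrac34-2^{-n}\Bigr)c_n+2^{-n},
\]
as required. The main obstacle is the first case: one must recognise that the relevant quantity is the \emph{parity}-linked $b_{1,n-1}^*$ rather than $b_{1,n-1}$, which comes down to observing that conditioning on both $\deg_G(u)$ and $\deg_G(v)$ being even couples not only the shared induced subgraph $G[S]$ but also the parities of the degrees of $u$ in $G-v$ and of $v$ in $G-u$; the remaining cases are routine applications of Lemmas~\ref{uniform degree dist} and~\ref{stochastic forgetfulness}.
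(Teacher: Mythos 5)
Your proof is correct and follows essentially the same approach as the paper: query the parities of $\deg(u)$ and $\deg(v)$, use the parity-linked quantity $b_{1,n-1}^*$ when both are even, and otherwise remove an even-degree vertex and fall back to $c_n$. The only cosmetic differences are that you split the ``not both even'' case into two subcases rather than handling it uniformly, and you spell out the verification that $(G-u,G-v)$ is parity-linked on $[n+1]\setminus\{u,v\}$ (which the paper simply asserts); both arguments lead to the same bound.
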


\begin{proof}
    Let $G\sim G([n + 1], 1/2)$. We query the parities of $\deg(n)$ and $\deg(n + 1)$, based on which we split into two cases. 

    \begin{itemize}
        \item Case 1: $\deg(n), \deg(n + 1)$ are both even. By Lemma \ref{uniform degree dist}, this case takes place with probability $1/4$. 

        If $G - n$ (respectively, $G-(n+1)$) is even-degenerate, then so is $G$ since we may remove $n$ (resp. $n+1$) first and then decompose $G-n$ (resp. $G-(n+1)$). Hence, if $G$ is non-even-degenerate, then both $G-n$ and $G-(n+1)$ must be non-even-degenerate. Note that conditional on the event that both $\deg(n)$ and $\deg(n+1)$ are even, the graphs $G - n$ and $G - (n + 1)$ are parity-linked on $[n - 1]$. Hence, in this case, $G$ is non-even-degenerate with probability at most $b_{1, n - 1}^*$. 

        \item Case 2: $\deg(n), \deg(n + 1)$ are not both even. 

        We query the parities of the degrees of all other vertices in $G$, on the basis of which we further split into two subcases. 

        \begin{itemize}
            \item Subcase 1: $\deg(i)$ is even for some vertex $i$. By Lemma \ref{uniform degree dist}, this case takes place with overall probability at least $3/4 - 2^{-n}$. 

            Remove $i$ from $G$, and by Lemma \ref{stochastic forgetfulness}, $G - i\sim G(n, 1/2)$. Thus $G - i$ is non-even-degenerate with probability $c_n$. 

            Hence in this subcase, $G$ is non-even-degenerate with probability at most $c_n$. 
            \item Subcase 2: all degrees are odd. By Lemma \ref{uniform degree dist}, this case takes place with overall probability at most $2^{-n}$. 
        \end{itemize}
    \end{itemize}
    The desired upper bound on $c_{n + 1}$ now follows from combining the conditional probabilities above. 
\end{proof}

Combining Lemmas \ref{o(1) bound lem} and \ref{s in terms of c}, we obtain the following recursive bound on $c_n$.

\begin{cor} \label{cor:o(1) recursion}
    For all $n\geq 2$, we have $c_{n+1}\leq c_n-\frac{c_n(1-2c_n)}{4n}-2^{-n}c_n+2^{-n}$.
\end{cor}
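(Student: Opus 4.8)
The plan is to combine Lemmas~\ref{o(1) bound lem} and~\ref{s in terms of c} directly. Lemma~\ref{o(1) bound lem} gives
\[
c_{n+1}\leq \tfrac14 b_{1,n-1}^* + \left(\tfrac34 - 2^{-n}\right)c_n + 2^{-n},
\]
and Lemma~\ref{s in terms of c} gives $b_{1,n-1}^* \leq c_n - c_n(1-2c_n)/n$. Substituting the latter into the former, the $\tfrac14 b_{1,n-1}^*$ term becomes $\tfrac14 c_n - \tfrac{c_n(1-2c_n)}{4n}$, and adding the $\left(\tfrac34 - 2^{-n}\right)c_n$ term collects the $c_n$ coefficients into exactly $c_n$, leaving the claimed correction terms $-\tfrac{c_n(1-2c_n)}{4n} - 2^{-n}c_n + 2^{-n}$.

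The only subtlety is that Lemma~\ref{s in terms of c} is stated for $n\geq 2$, and in Lemma~\ref{o(1) bound lem} we are applying it with the same index $n$ (the graph on $[n+1]$ has degree-parity queries reducing to parity-linked graphs on $[n-1]$, with $m=1$ and $s=n-1$), so the hypothesis $n\geq 2$ is exactly what we need and matches the range $n\geq 2$ asserted in the corollary. Hence no extra case analysis is required; one simply chains the two inequalities.

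I would write the proof as a single short computation: start from the bound of Lemma~\ref{o(1) bound lem}, replace $b_{1,n-1}^*$ by the upper bound from Lemma~\ref{s in terms of c} (valid since $n\geq 2$), and simplify. The main (and essentially only) thing to be careful about is the bookkeeping of the $c_n$ coefficient: $\tfrac14 c_n + \left(\tfrac34 - 2^{-n}\right)c_n = c_n - 2^{-n}c_n$, which is where the $-2^{-n}c_n$ term comes from. There is no real obstacle here; all the work has been done in the two preceding lemmas, and this corollary is purely the algebraic consolidation that will feed into the induction establishing $c_n = o(1)$ in the next step.
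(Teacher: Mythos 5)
Your proof is correct and follows exactly the route the paper intends: the corollary is obtained by substituting the bound on $b_{1,n-1}^*$ from Lemma~\ref{s in terms of c} into Lemma~\ref{o(1) bound lem} and collecting the $c_n$ coefficients. The algebra and the remark about the validity range $n\geq 2$ both check out.
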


We are now ready to prove the main result of this subsection.

\begin{prop} \label{o(1) prop}
    We have $c_n = o(1)$. 
\end{prop}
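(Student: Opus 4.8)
The goal is to deduce $c_n = o(1)$ from the recursive inequality in Corollary~\ref{cor:o(1) recursion}, namely $c_{n+1} \leq c_n - \frac{c_n(1-2c_n)}{4n} - 2^{-n}c_n + 2^{-n}$. First I would record the trivial bounds $0 \leq c_n \leq 1$ and argue that $c_n$ is bounded away from $1$ for all large $n$ — indeed this already follows from Proposition~\ref{decreasing c_n} (which gives $c_{n+1} \leq (1-2^{-n})c_n + 2^{-n}$ and hence, since $c_4 < 1$, an explicit bound like $c_n \leq 1 - \delta$ for some absolute $\delta > 0$ once $n$ is large; alternatively Proposition~\ref{prop:deg lower} together with a crude estimate suffices). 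Actually, the cleanest route is: once we know $c_n \leq 3/4$ (say), the factor $(1-2c_n)$ is not useful, so instead I would fix a small threshold $\varepsilon > 0$ and suppose for contradiction that $c_n \geq \varepsilon$ for all $n$ in some infinite set, or better, show directly that $\limsup c_n = 0$.

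Here is the main argument. Suppose $c_n \geq \varepsilon$ for some $\varepsilon \in (0, 1/4)$ and all $n \geq N_0$; I will derive a contradiction. For such $n$ we have $1 - 2c_n \geq$ nothing useful in general, so instead I should first establish that $c_n \to$ some limit or that $\liminf c_n$ is small. The key point is monotonicity-up-to-error: from Corollary~\ref{cor:o(1) recursion}, whenever $c_n \geq 1/4$ we still get $c_{n+1} \leq c_n + 2^{-n}$, and whenever $c_n < 1/4$ we get $c_{n+1} \leq c_n - \frac{c_n}{8n} + 2^{-n}$ (using $1 - 2c_n > 1/2$). Since $\sum 2^{-n} < \infty$, the sequence $c_n + \sum_{k \geq n} 2^{-k} = c_n + 2^{-n+1}$ is "almost decreasing," so $c_n$ converges to some limit $L \geq 0$; I would make this precise by noting $c_{n+1} + 2^{-n} \leq c_n + 2^{-n+1} \cdot(\text{something})$ — more carefully, set $a_n = c_n + 2^{-(n-2)}$ and check $a_{n+1} \leq a_n$ directly from $c_{n+1} \leq c_n + 2^{-n}$, so $a_n$ is non-increasing and bounded below, hence converges, hence $c_n$ converges to some $L \in [0,1]$.

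Now suppose $L > 0$. Then $c_n \to L$, so for large $n$, $c_n \geq L/2 > 0$ and $1 - 2c_n \geq 1 - 2L' $ for $c_n$ near $L$; if $L < 1/2$ this is a positive constant $\eta := 1 - 2L - o(1) > 0$, giving $c_{n+1} \leq c_n - \frac{(L/2)\eta}{8n} + 2^{-n}$ for all large $n$. Summing from $N$ to $M$: $c_{M+1} \leq c_N - \frac{(L/2)\eta}{8}\sum_{n=N}^M \frac{1}{n} + \sum_{n=N}^M 2^{-n}$, and since $\sum 1/n$ diverges while the tail of $\sum 2^{-n}$ is bounded, the right-hand side tends to $-\infty$, contradicting $c_{M+1} \geq 0$. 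If instead $L \geq 1/2$, this contradicts the already-established fact (via Proposition~\ref{decreasing c_n} iterated from the base case, or via Proposition~\ref{prop:deg lower}'s complement) that $\limsup c_n < 1/2$ — indeed from $c_{n+1} \leq (1 - 2^{-n})c_n + 2^{-n}$ one checks $\limsup c_n \leq \limsup \prod(1-2^{-k}) \cdot c_{N} + (\text{small}) $, and since the infinite product $\prod_{k \geq 1}(1 - 2^{-k})$ is a constant strictly less than $1$, iterating shows $c_n$ is eventually at most, say, $0.5$. Hence $L < 1/2$, the first contradiction applies, and therefore $L = 0$, i.e. $c_n = o(1)$.

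The main obstacle is purely bookkeeping: making the "almost decreasing" and "divergent harmonic sum beats convergent geometric tail" arguments rigorous with honest constants, and handling the dichotomy $c_n < 1/4$ versus $c_n \geq 1/4$ so that the useful factor $(1 - 2c_n)$ is bounded below exactly when we invoke it. There is no deep difficulty — the content is entirely in Corollary~\ref{cor:o(1) recursion}, which already encodes the crucial decorrelation estimate $b^*_{1,n-1} \leq c_n - c_n(1-2c_n)/n$; the present proposition just extracts the asymptotic consequence.
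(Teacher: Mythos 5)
Your overall plan — use Corollary~\ref{cor:o(1) recursion} to force $c_n \to 0$ — is sound, but you take a genuinely different and more elaborate route than the paper, and one step as written does not work.

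The paper's proof is much shorter because it starts from the observation that $c_1 = c_2 = c_3 = 0$ (every graph on at most three vertices is even-degenerate), and then Proposition~\ref{decreasing c_n} gives $c_n \leq c_3 + 2^{-3} + 2^{-4} + \cdots \leq 1/4$ for \emph{every} $n$. Hence $1 - 2c_n \geq 1/2$ always, Corollary~\ref{cor:o(1) recursion} becomes $c_{n+1} \leq (1 - \frac{1}{8n})c_n + 2^{-n}$, and one simply solves this recursion explicitly to get the quantitative bound $c_n \leq 10\, n^{-1/8}$. There is no need for limits, dichotomies, or a harmonic-series divergence argument.

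You did not notice the base-case observation and so assumed you only know $c_n \leq 1 - \delta$ or $c_n \leq 3/4$, which forces the whole limit-$L$ machinery. Within that machinery, the step where you rule out $L \geq 1/2$ is flawed as written: the claim that ``since $\prod_{k \geq 1}(1 - 2^{-k})$ is a constant strictly less than $1$, iterating shows $c_n$ is eventually at most $0.5$'' does not follow. The recursion $c_{n+1} \leq (1 - 2^{-n})c_n + 2^{-n}$ has $1$ as a fixed point; writing $\delta_n = 1 - c_n$ it gives $\delta_{n+1} \geq (1 - 2^{-n})\delta_n$, and iterating only shows $c_n$ stays bounded away from $1$ by roughly the same margin as at the base point — the shrinking infinite product works \emph{against} you, not for you. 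To get $\limsup c_n < 1/2$ you must use a \emph{small} base value, and the available one is exactly $c_3 = 0$. Once you invoke that, you get $c_n \leq 1/4$ for all $n$ — at which point the entire limit argument becomes unnecessary and collapses to the paper's direct computation.

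So: your approach is salvageable (cite $c_3 = 0$, conclude $c_n \leq 1/4$, then your harmonic-series contradiction for $L > 0$ goes through cleanly), but the infinite-product justification for $L < 1/2$ is wrong, and the corrected version contains within it the observation that makes the paper's far shorter and quantitatively stronger argument available.
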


\begin{proof}
    We have $c_1 = c_2 = c_3 = 0$, and for $n\geq 4$, by Proposition \ref{decreasing c_n}, $c_n\leq c_3 + 2^{-3} + \cdots + 2^{-(n - 1)}\leq 1/4$. Hence $1-2c_n\geq 1/2$ always. 
    
    Thus by Corollary \ref{cor:o(1) recursion}, for any $n\geq 2$, we have $c_{n + 1}\leq (1 - 1/(8n))c_n + 2^{-n}$. Thus $c_{n + 1} + 10\cdot 2^{-(n + 1)}\leq (1 - 1/(8n))c_n + 6\cdot 2^{-n}\leq (1 - 1/(8n))\left(c_n + 10\cdot 2^{-n}\right)\leq (1 + 1/n)^{-1/8}\left(c_n + 10\cdot 2^{-n}\right)$. In other words, for any $n\geq 2$, we have $(n + 1)^{1/8}\left(c_{n + 1} + 10\cdot 2^{-(n + 1)}\right)\leq n^{1/8}\left(c_n + 10\cdot 2^{-n}\right)$. 

    Thus for any $n\geq 2$, we have $n^{1/8}\left(c_n + 10\cdot 2^{-n}\right)\leq 2^{1/8}\left(c_2 + 10\cdot 2^{-2}\right)\leq 10$. Thus $c_n \leq 10n^{-1/8} = o(1)$. 
\end{proof}

\subsection{The exponential bound} \label{exp bound}

In this subsection we prove that $c_n=e^{-\Omega(n)}$. This is achieved via establishing a recursive inequality, similar to Corollary \ref{cor:o(1) recursion}, but this time relating $c_{2n}$ to $c_n$.

For technical reasons, we introduce the following function.

\begin{df}
    Let $f_n = \max\left(2^{-n/6}, \max_{n'\leq n}c_{n'}2^{(n' - n)/2}\right)$.
\end{df}

Note that trivially $f_n\geq c_n$, so it will be sufficient to upper bound $f_n$. In fact, the reader is encouraged to think of $f_n$ and $c_n$ as the same function.

The key recursive inequality is as follows.

\begin{prop} \label{key}
    For any positive integers $a\leq t\leq n$ with $a < n/2$, we have 
    \begin{equation*}
        c_{2n} \leq 256\cdot t^{4a}2^{3a}f_n^2 + 14\cdot 2^{-t/2} + 8\cdot 2^{-a^2}. 
    \end{equation*}
\end{prop}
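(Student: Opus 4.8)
The plan is to mimic the structure of the $o(1)$ argument (Lemma \ref{s in terms of c} together with the case analysis of Lemma \ref{o(1) bound lem}), but now ``splitting'' a $2n$-vertex graph into two halves of $n$ vertices that overlap in a set $S$ of size roughly $t$, rather than removing a single vertex at a time. Concretely, let $G\sim G([2n],1/2)$, write $[2n]=V_1\cup V_2$ with $|V_1|=|V_2|=n$ and $|V_1\cap V_2|=s$ for a suitable $s\approx t$, and consider the induced subgraphs $G[V_1]$ and $G[V_2]$, which are linked on $S:=V_1\cap V_2$. The first step is to reduce to the \emph{parity-linked} setting: after querying degree parities (using Lemma \ref{uniform degree dist}) and handling the low-probability event that all degrees have a forced parity, we arrive, with the loss of the additive $2^{-t/2}$-type terms, at the situation where $G$ being non-even-degenerate forces \emph{both} $G[V_1]$ and $G[V_2]$ to be non-even-degenerate (this is the same ``two certificates'' idea: if we can even-degenerate either half first, we win). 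This bounds $c_{2n}$ in terms of $b_{n-s,s}$ (or its parity-linked cousin $b^*_{n-s,s}$), up to the error terms.

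The heart of the argument is then to bound $b_{m,s}$ (with $m=n-s$) in terms of $f_n^2$ — i.e.\ to show that conditioning two independent copies of $G(n,1/2)$ to agree on a small induced subgraph $G[S]$ does not boost the probability that both are non-even-degenerate by more than a controlled factor. I would do this by a resampling/telescoping chain exactly as in Lemma \ref{s in terms of c}: starting from a pair that is genuinely independent and passing, one vertex of $S$ at a time, to the pair linked on all of $S$, at each step resampling the edges at a single vertex of $S$ (inside both graphs simultaneously, keeping that vertex's degree parity fixed so that Lemma \ref{stochastic forgetfulness} and the independence structure are preserved). Each resampling step changes the joint ``both non-even-degenerate'' probability by an amount controlled by $c_n - b^*_{1,n-1}$-type quantities, and summing the union bound over the $\approx s$ steps, together with the trivial bound $c_n\le f_n$ and the geometric slack built into the definition $f_n=\max(2^{-n/6},\max_{n'\le n}c_{n'}2^{(n'-n)/2})$, yields $b_{m,s}\lesssim (\text{poly in }t,2^a)\cdot f_n^2$ once $s$ is chosen of size $\Theta(t)$ and $a$ controls how many ``bad'' parity configurations we must separately absorb. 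The polynomial prefactor $256\,t^{4a}2^{3a}$ is exactly what one pays: $t^{4a}$ from counting the relevant subgraph/ordering data on an $O(a)$-size piece of $S$, $2^{3a}$ from enumerating edge-sets and parity choices there, and the constant $256$ from combining the $O(1)$-many cases and subcases.

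The main obstacle, as in the earlier lemma, is the correlation control — showing the two ``certificates'' are nearly independent. The subtlety is that here the shared data is an entire induced subgraph $G[S]$ of size $\approx t$, not a single vertex, so the naive telescoping must be organized carefully: one cannot resample a vertex of $S$ without disturbing $G[S]$ itself, so the right move is to resample, for each $i\in S$ in turn, all edges of $G[V_1]$ and of $G[V_2]$ incident to $i$ \emph{in a coupled way} (same resampled neighbourhood inside $S$, independent resampled neighbourhoods outside $S$, parities of $\deg_{V_1}(i)$ and $\deg_{V_2}(i)$ held fixed), so that before the step the pair is linked on $S\setminus\{i\}$ and after it the pair is linked on $S$. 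Verifying that each such step is a legitimate coupling with the claimed marginal distributions, and that the accumulated error over all $\Theta(t)$ steps is absorbed by the stated error terms (this is where $a$ enters: we only need to control the steps exactly for a bounded number $a$ of vertices and bound the rest crudely), is the delicate bookkeeping; everything else is the routine Chernoff/Lemma \ref{uniform degree dist} estimates already used repeatedly above. Finally, substituting $t\le n$ and the eventual choice $a=\Theta(\log n/\log\log n)$ (to be made when the proposition is applied) collapses the prefactor to subpolynomial size and gives the exponential bound on $c_n$.
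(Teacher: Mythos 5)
There is a genuine gap in the first, structural half of your argument, and it is precisely the place where the paper has to do something non-obvious. You write that, after querying degree parities and absorbing a $2^{-t/2}$-type error, ``$G$ being non-even-degenerate forces \emph{both} $G[V_1]$ and $G[V_2]$ to be non-even-degenerate,'' for a fixed choice of overlapping halves $V_1,V_2$. This is not true for fixed $V_1,V_2$. Even if $G[V_1]$ is even-degenerate, that only says you can order $V_1$ so each vertex has even degree \emph{within the remaining part of $V_1$}; to use this as a removal order in $G$ you must first delete all of $V_2\setminus V_1$, and each of \emph{those} deletions must be legal in $G$, i.e.\ each deleted vertex must have even degree in the whole remaining graph, not in $G[V_2]$. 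Nothing in a parity query plus a union-bound over ``forced-parity'' events fixes this: the obstruction is that the initial segment you remove is not under your control.

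The paper's actual proof handles this by running a greedy removal process in $G$ itself and showing (Lemma \ref{key stage 1}, via two Markov-chain hitting-time estimates) that with probability $1-7\cdot 2^{-t/2}-4\cdot 2^{-a^2}$ this process legally deletes $n$ vertices whose set $C$ is \emph{$(t,a)$-initial}, i.e.\ sandwiched between $[n-t]$ and $[n+t]$ and differing from $[n]$ in at most $a$ places; symmetrically for a $(t,a)$-terminal set $\tilde C$. Only then do you know that $G-C$ and $G-\tilde C$ are both non-even-degenerate if $G$ is. Since $C,\tilde C$ are random and data-dependent, one then union-bounds over all $(t,a)$-initial $D$ and $(t,a)$-terminal $\tilde D$; there are at most $4t^{2a}$ of each, which is where the $256\,t^{4a}$ actually comes from (not from ``counting subgraph/ordering data on an $O(a)$-size piece of $S$''). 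A related confusion: the linked overlap $([2n]\setminus D)\cap([2n]\setminus\tilde D)$ has size at most $2a$, not $\approx t$ as you state, because $D$ sits mostly in $[n]$ and $\tilde D$ mostly in $\{n+1,\dots,2n\}$; it is this small overlap that makes the $2^{3s/2}f_{m+s}^2$ bound from Lemma \ref{link cor} give $2^{3a}f_n^2$.

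Your proposed proof of the linking bound ($b_{m,s}\lesssim 2^{O(s)}f_{m+s}^2$) by a vertex-by-vertex resampling chain on $S$ is plausible in spirit, but it is not what the paper does and you do not verify the crucial inequality it would have to supply (that a single coupled resampling of one vertex of $S$ changes the joint probability by a controllable amount, uniformly over the conditioning). The paper instead bounds the conditional probability $c_{m,s}=\max_{S,L}c_{m+s,S,L}$ directly via the degree-parity recursion of Lemma \ref{conditioned recurrence}, iterates it into a geometric-sum bound (Corollaries \ref{cor 1} and \ref{g** in terms of c*}), and only then passes to $f$. If you want to pursue the resampling route you would need to make the coupling and the per-step error bound precise; as written it is a plan, not a proof. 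Finally, the remark that one eventually takes $a=\Theta(\log n/\log\log n)$ is not how the proposition is used: the paper takes $a=\lfloor\sqrt{4h_n}\rfloor$ and $t=a^2$ in Lemma \ref{key nice}, which is what makes the doubling recursion close.
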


Once Proposition \ref{key} is established, one can easily translate this inequality into a bound for $f_{2n}$ in terms on $f_n$. Solving that recursion is tedious, but straightforward, and will be done in Section~\ref{recursion solving}. We now turn to the more interesting task of proving Proposition \ref{key}. Similarly to the argument in the previous subsection (see Lemma \ref{s in terms of c}), it is crucial to upper bound the function $b_{m,s}$ from Definition \ref{def:linked}, and here we need a bound for general values of $m$ and $s$. 

\begin{lem} \label{link cor}
    For any integers $m\geq 1$ and $s\geq 0$, we have $b_{m, s}\leq 16\cdot 2^{3s/2} f_{m + s}^2$. 
\end{lem}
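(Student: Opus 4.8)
The plan is to reduce the bound on $b_{m,s}$ to the quantity $c_{m+s}$ (and its near relatives) by a coupling argument analogous to the one in Lemma \ref{s in terms of c}, and then to absorb the loss into the function $f_{m+s}$. Recall that $(G,G')$ linked on $S$ means $G\sim G(V,1/2)$, $G'\sim G(V',1/2)$ with $|V|=|V'|=m+s$, $|S|=s$, conditioned on $G[S]=G'[S]$ and independent otherwise. The idea is to build a finite ``interpolation path'' of graphs $G=G^{(0)},G^{(1)},\dots,G^{(m)}=G'$ (more precisely, a path in the joint distribution) where consecutive graphs agree on all but one vertex outside $S$, i.e.\ consecutive pairs are linked (or parity-linked) on a set of size $m+s-1$. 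Since $G,G'$ are non-even-degenerate with probability $b_{m,s}$ jointly, and each is marginally $G(m+s,1/2)$, we can compare the joint probability of both being non-even-degenerate to the ``fully independent'' value, which after summing over the parity of $e(G)$ is at most $2c_{m+s}^2$ by the same computation as in Lemma \ref{s in terms of c} (the factor $2$ coming from conditioning on the edge-parity class). The telescoping union bound then gives $b_{m,s}$ in terms of $c_{m+s}$ with a multiplicative loss that is polynomial in $m$, but we want to do better.

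The point of introducing $f_n$ is that it is, by construction, ``geometrically decreasing'': $f_n\geq 2^{-n/6}$ and $f_n\geq c_{n'}2^{(n'-n)/2}$ for all $n'\leq n$, so in particular $c_n\leq f_n$ and $f_n$ decays at least like $2^{-n/6}$. I expect the proof to proceed by conditioning on the shared induced subgraph $G[S]=G'[S]=:H$, and then, given $H$, bounding the probability that \emph{both} extensions are non-even-degenerate. The key structural observation should be that non-even-degeneracy of $G$ depends on $G$ only through a moderate amount of information once we start peeling off vertices of even degree; concretely, the quantity to control is, over the randomness of $H$, the product of the conditional probabilities that $G$ and $G'$ (which, given $H$, are independent) are each non-even-degenerate. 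Writing $p_H$ for $\mathbb{P}(G\text{ non-even-degenerate}\mid G[S]=H)$ we need to bound $\mathbb{E}_H[p_H^2]$, and the natural tool is Cauchy--Schwarz together with a bound of the shape $\mathbb{E}_H[p_H^2]\le 2^{3s/2}(\mathbb{E}_H p_H)^2\cdot(\text{const})$ — i.e.\ the ratio between the second moment and the square of the first moment of $p_H$ is controlled by $2^{O(s)}$, reflecting that conditioning on an $s$-vertex induced subgraph carries only $\binom{s}{2}=O(s^2)$ bits but the relevant fluctuation is captured by $O(s)$ bits of degree-parity information. Since $\mathbb{E}_H p_H = c_{m+s}\le f_{m+s}$, this yields $b_{m,s}\le (\text{const})\cdot 2^{3s/2}f_{m+s}^2$, and tracking constants gives the claimed $16\cdot 2^{3s/2}f_{m+s}^2$.

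More concretely, here is the route I would actually take. First, condition on the degree-parity vector of $G[S]$ relative to the rest — equivalently, for each $v\in S$ record the parity of the number of neighbours $v$ has \emph{inside} $S$; call this data $D_S$. Using Lemma \ref{uniform degree dist} (applied to $H=G[S]$) and Lemma \ref{stochastic forgetfulness}, once we also reveal the full degree-parity vector of $G$ we either find a vertex of even degree (and can delete it, reducing to $c_{m+s-1}$ via Lemma \ref{stochastic forgetfulness}) or all degrees are odd (an event of probability $2^{-(m+s-1)}$). Running this deletion process simultaneously on $G$ and $G'$ — they share $G[S]$, so as long as we only delete vertices of $S$ the two processes can be coupled to stay linked on the shrinking common set — gives a recursion of the form $b_{m,s}\lesssim b_{m,s-1}+(\text{small})$ when there is an even-degree vertex in $S$, and otherwise reduces to $b_{m+1,s-1}$-type or to the ``both have all-odd degrees on $S$'' case. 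Iterating down to $s=0$, where $b_{m,0}=c_m^2\le f_m^2$, and being careful that each of the $s$ steps costs a factor of at most $2^{3/2}$ (the $2^{3s/2}$ in the statement) plus additive error terms that are geometrically summable, yields the bound. The main obstacle — and the step I would spend the most care on — is making the coupling of the two deletion processes rigorous while keeping the pair linked on the common vertex set at every stage, and correctly accounting for the parity conditioning: one must ensure that the event ``both non-even-degenerate'' factorizes appropriately after conditioning on the shared data, so that the inductive bound on $b_{m,s-1}$ (or $b_{m+1,s-1}$) can be applied, and that the per-step multiplicative loss is genuinely $2^{3/2}$ rather than something polynomial in $m$.
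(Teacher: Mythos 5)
Your proposal correctly identifies the two ingredients (reduce $b_{m,s}$ to a single-graph conditional quantity, then control the $s$-dependence by a deletion recursion), but there are two genuine gaps, one in each step.

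First, the reduction step. You write $b_{m,s}=\mathbb{E}_H[p_H^2]$ and say ``the natural tool is Cauchy--Schwarz together with a bound of the shape $\mathbb{E}_H[p_H^2]\le 2^{3s/2}(\mathbb{E}_H p_H)^2\cdot(\mathrm{const})$.'' Cauchy--Schwarz goes the wrong way here ($(\mathbb{E}_H p_H)^2\le\mathbb{E}_H[p_H^2]$), and the second-moment-over-first-moment-squared bound is precisely what needs proving, not a tool. The paper does not compare $\mathbb{E}_H[p_H^2]$ with $(\mathbb{E}_H p_H)^2$ at all; it uses the much simpler pointwise bound $\mathbb{E}_H[p_H^2]\le(\max_H p_H)\cdot\mathbb{E}_H[p_H]=c_{m,s}\cdot c_{m+s}$, where $c_{m,s}$ is the maximum of $p_H$ over $H$ (after which the work is to bound $c_{m,s}$, i.e.\ a \emph{single-graph} conditional probability). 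Your write-up circles around this but never states it, and the Cauchy--Schwarz suggestion would stall.

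Second, and more seriously, the ``simultaneous deletion'' coupling you sketch in the last paragraph would fail: even though $G$ and $G'$ share $G[S]$, a vertex $v\in S$ can have even degree in $G$ and odd degree in $G'$, because the edges from $v$ to $V\setminus S$ and to $V'\setminus S$ are sampled independently. So the two deletion processes cannot be coupled to delete the same vertex of $S$ at each step, and a recursion of the form $b_{m,s}\lesssim b_{m,s-1}+(\text{small})$ does not go through. The paper sidesteps this completely: after the reduction $b_{m,s}\le c_{m,s}\,c_{m+s}$, the recursion is run on the single-graph quantity $c_{m,s}$ alone (Lemma~\ref{conditioned recurrence}), where no coupling between two graphs is needed. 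Even there the recursion has a subtlety you do not mention: deleting a vertex $i\in S$ leaves $G-i$ with extra degree-parity conditioning (Lemma~\ref{conditioned stochastic forgetfulness 2}), so a second deletion of a vertex outside $S$ is needed to return to an unconditioned distribution. The accumulated per-step loss is then controlled via geometric random variables to give $g_{m,s}\le 16\cdot 2^s f_m$ (Lemma~\ref{g** in terms of f*}); the remaining factor $2^{s/2}$ in the final exponent $2^{3s/2}$ comes separately, from $f_m\le 2^{s/2}f_{m+s}$ (Lemma~\ref{decreasing f_n}), rather than from a ``$2^{3/2}$ per step'' accounting as you suggest.
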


Recall that $b_{m,s}$ is the probability that both $G$ and $G'$ are non-even-degenerate where $G$ and $G'$ are (random) graphs on $m+s$ vertices, linked on a set of size $s$. Roughly speaking, Lemma \ref{link cor} says that when $s$ is small, then this probability is not much bigger than the probability that two independent random graphs are both non-even-degenerate (indeed, the latter probability is $c_{m+s}^2$, which we think of as the same as $f_{m+s}^2$).

We will prove Lemma \ref{link cor} in Section \ref{sec:linked}. Now we proceed with the proof of Proposition \ref{key}. The following definition plays a key role in our proof.

\begin{df}
    For any positive integers $a, t\leq n$, we call a subset $C\subseteq [2n]$ \emph{$(t, a)$-initial} if the following all hold. 
    \begin{enumerate}
        \item $|C| = n$, 
        \item $[n - t]\subseteq C\subseteq [n + t]$, 
        \item $|[n]\backslash C|\leq a$. 
    \end{enumerate}
    We call a subset $C\subseteq [2n]$ \emph{$(t, a)$-terminal} if the reversed set $\{2n + 1 - c:c\in C\}$ is \emph{$(t, a)$-initial}. 
\end{df}

\begin{lem} \label{key stage 1}
    Let $G\sim G([2n], 1/2)$. Let $a, t\leq n$ be positive integers. Define vertices $v_1,v_2,\dots,v_n$ recursively as follows. Having defined $v_1,\dots,v_i$, we let $v_{i+1}$ be the vertex of smallest label in $G-\{v_1,\dots,v_i\}$ amongst vertices of even degree in $G-\{v_1,\dots,v_i\}$. If no such vertex exists, then let $v_{i+1}$ be a uniformly random vertex in $G-\{v_1,\dots,v_i\}$. Let $C=\{v_1,\dots,v_n\}$ be the set of removed vertices. Denote by $F$ the event that each $v_{i+1}$ has even degree in $G-\{v_1,\dots,v_i\}$ and $C$ is $(t, a)$-initial. 
        
    Then $F$ holds with probability at least $1 - 7\cdot 2^{-t/2} - 4\cdot 2^{-a^2}$. 
\end{lem}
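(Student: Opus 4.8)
\textbf{Proof proposal for Lemma \ref{key stage 1}.}

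The plan is to bound the failure probability of $F$ by splitting into two bad events: (i) the greedy removal of even-degree vertices gets stuck before $n$ vertices have been removed, and (ii) the greedy removal succeeds for $n$ steps but the set $C$ of removed vertices fails to be $(t,a)$-initial. We will show each of these has probability $O(2^{-t/2})+O(2^{-a^2})$; in fact event (i) will contribute $O(2^{-t/2})$ (the place where the greedy process might stall is exactly when the remaining graph has all degrees odd, which is exponentially unlikely to happen while many vertices remain), and event (ii) will contribute the rest.

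The key structural observation is that the greedy process is adapted to the exposure of degree parities in a convenient way. Indeed, by Lemma \ref{uniform degree dist}, the parity vector $D\in\mathbb{F}_2^{[2n]}$ of $G$ is uniform among even-weight vectors; and by Lemma \ref{stochastic forgetfulness}, conditioning on $D$, deleting any vertex leaves a uniform $G(2n-1,1/2)$. So after we have removed $v_1,\dots,v_i$ (all of even degree at the time of removal), the remaining graph $G-\{v_1,\dots,v_i\}$ is distributed as $G(2n-i,1/2)$, and its degree parity vector is uniform among even-weight vectors on the remaining $2n-i$ vertices. The probability that a uniform even-weight vector on $k$ coordinates is all-ones is $2^{-(k-1)}$ if $k$ is even and $0$ if $k$ is odd; hence, as long as at least, say, $t+2$ vertices remain, the chance that we get stuck at that step is at most $2^{-(t+1)}$. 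Summing the geometric-type series over the (at most $n$) steps until only $\approx t$ vertices remain gives that with probability $1-O(2^{-t})$ we can greedily remove vertices down to at most $t$ remaining; in particular, we can certainly carry out $n$ removals (since $t\le n$), establishing that event (i) has probability $O(2^{-t})\le O(2^{-t/2})$.

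It remains to handle event (ii): conditioned on the greedy process running for $n$ steps, bound the probability that $C=\{v_1,\dots,v_n\}$ fails one of the three defining conditions of $(t,a)$-initial. Condition (1), $|C|=n$, holds automatically. For condition (2), $[n-t]\subseteq C\subseteq[n+t]$: a vertex $u$ with small label fails to be removed among the first $n$ steps only if, every time it was eligible, some smaller-labelled vertex of even degree was present, or $u$ itself had odd degree. The cleanest route is to observe that $v_i\ge n-t$ cannot persist: using the uniformity of degree parities at each step (and the fact that the process always removes the smallest eligible even-degree vertex), the event that some label in $[n-t]$ survives all $n$ rounds, or that we are forced to remove a label exceeding $n+t$, requires that in each of roughly $t$ consecutive rounds the relevant small-label vertices all had odd degree — an event of probability $\le 2^{-t}\cdot\mathrm{poly}$, which after a union bound over the $O(t)$ offending labels is $O(t2^{-t})=O(2^{-t/2})$. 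For condition (3), $|[n]\setminus C|\le a$: the vertices of $[n]$ not removed in the first $n$ steps are precisely those that, when eligible, kept losing to smaller even-degree vertices; one shows that $|[n]\setminus C|=|C\setminus[n]|$ and that each "extra" vertex picked up from $[n+1,n+t]$ corresponds to a round in which all of a prescribed set of vertices had odd degree. Quantitatively, $\mathbb{P}(|[n]\setminus C|\ge a)$ is bounded by the probability that in $a$ disjoint "blocks" of rounds the degree parities conspire; since at each such step the parity vector is uniform even-weight on $\ge n$ coordinates and $a<n/2$, each block contributes a factor $\le 2^{-a}$ or better, and a careful count gives $\le 4\cdot 2^{-a^2}$. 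Adding the three contributions yields $\mathbb{P}(\overline F)\le 7\cdot 2^{-t/2}+4\cdot 2^{-a^2}$.

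The main obstacle I anticipate is the bookkeeping in condition (3): carefully identifying, for a given target deficiency $|[n]\setminus C|\ge a$, a family of independent (or at least negatively correlated) "parity conspiracy" events whose probabilities multiply to give the $2^{-a^2}$ bound rather than merely $2^{-a}$. The trick will be to exploit that each time the greedy process is \emph{forced} to reach past label $n$ it is because a whole interval of candidate vertices simultaneously had odd degree, and that conditioning on earlier rounds (via Lemma \ref{stochastic forgetfulness}) keeps the remaining graph a fresh $G(k,1/2)$, so these interval-parity events can be stacked multiplicatively; getting the exponent to be quadratic in $a$ requires choosing the intervals of geometrically (or linearly) increasing length and using $a<n/2$ to ensure there is always enough room. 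Everything else — the geometric sums, the union bounds over $O(t)$ labels, and the $2^{-(k-1)}$ estimate for the all-ones parity vector — is routine.
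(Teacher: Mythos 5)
Your high-level decomposition matches the paper's: bound the probability of getting stuck (all remaining degrees odd at some step), bound $\mathbb{P}(C\not\subseteq[n+t])$, bound $\mathbb{P}([n-t]\not\subseteq C)$, and bound $\mathbb{P}(|[n]\setminus C|>a)$. Your treatment of the first two pieces is essentially correct (a union bound over steps, each bounded by the probability of an all-odd or nearly-all-odd parity vector, using Lemma~\ref{uniform degree dist} and Lemma~\ref{stochastic forgetfulness}). However, the two remaining pieces---and especially the $2^{-a^2}$ bound that you explicitly flag as "the main obstacle"---are where the proposal has a genuine gap, and the gap is not bridgeable by the "disjoint blocks" idea you sketch.

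The issue is the following. Set $M'_i=|[n]\setminus C_i|$. As you observe, this is a Markov chain started at $n$ which at state $k$ stays put with probability $2^{-k}$ and drops to $k-1$ otherwise; $|[n]\setminus C|>a$ is the event $M'_n>a$, i.e.\ that the chain hasn't hit $a$ within $n$ steps. You propose to bound this by identifying $a$ disjoint "conspiracy" events each of probability $\leq 2^{-a}$. But the number of stalls is $>a$ and the stall levels can range from $a+1$ up to $n$; a naive union bound over the $\binom{n}{a+1}$-ish ways to place the stalls destroys the exponent (the combinatorial factor is too large relative to $2^{-(a+1)^2}$), and the stalls are not disjoint in any way that lets you "multiply" without paying that combinatorial price. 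What actually makes the argument work, and what the paper does, is a moment-generating-function / exponential-Markov bound on the hitting time: writing $T'_k$ for the first time $M'$ hits $k$, the increments $T'_{k-1}-T'_k\sim\mathrm{Geo}(1-2^{-k})$ are \emph{independent}, and one computes $\mathbb{E}\bigl(2^{aT'_a}\bigr)=\prod_{k=a+1}^n\mathbb{E}\bigl(2^{a(T'_{k-1}-T'_k)}\bigr)\leq 4\cdot 2^{a(n-a)}$, whence Markov gives $\mathbb{P}(T'_a>n)\leq 4\cdot 2^{-a(a+1)}\leq 4\cdot 2^{-a^2}$. This is a genuinely different (and necessary) mechanism from a block-by-block union bound. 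The same MGF technique (now with exponent $1/2$ rather than $a$) handles $\mathbb{P}([n-t]\not\subseteq C)$ via the chain $M_i=|[n-t]\setminus C_i|$ and its hitting time $T_0$, replacing your "$2^{-t}\cdot\mathrm{poly}$" hand-wave by $\mathbb{E}(2^{T_0/2})\leq 4\cdot 2^{(n-t)/2}$ and Markov. Without these hitting-time MGF estimates the quantitative claim does not follow from the sketch as written.
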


\begin{proof}
    Note that $F$ holds if the following all hold. 
    \begin{enumerate}
        \item  each $v_{i+1}$ has even degree in $G-\{v_1,\dots,v_i\}$, 
        \item $[n - t]\subseteq C\subseteq [n + t]$, 
        \item $|[n]\backslash C|\leq a$. 
    \end{enumerate}
    We shall upper bound the failure probability of each. We will write $C_i=\{v_1,\dots,v_i\}$ for the set of vertices removed in the first $i$ steps and we write $G_i=G-C_i$ for the remaining graph after $i$ steps. Note that by Lemma \ref{stochastic forgetfulness}, conditional on the vertices $v_1,v_2,\dots,v_i$, the graph $G_i$ is uniformly random amongst graphs on vertex set $[2n]\setminus C_i$.
    \begin{enumerate}
        \item Since $G_i$ is uniformly random, by Lemma \ref{uniform degree dist}, the probability that all vertices of $G_i$ have odd degree is at most $2^{-(2n-i-1)}$. Hence, by the union bound, the probability that there is some $0\leq i\leq n-1$ such that $v_{i+1}$ has odd degree in $G-\{v_1,\dots,v_i\}$ is bounded above by $\sum_{i=0}^{n-1} 2^{-(2n-i-1)}\leq 2^{- n + 1}$. 
    
        \item We now upper bound the probability that $C\not \subseteq [n + t]$. If $v_i > n + t$ for some positive integer $i$, then all vertices in $[n + t]\backslash C_{i - 1}$ have odd degrees in $G_{i - 1}$. Hence $\mathbb{P}(v_i > n + t)\leq 2^{-(n + t - i + 1)}$. Summing over all $i\in [n]$ and taking the union bound, we have $\mathbb{P}(C\not \subseteq [n + t])\leq 2^{-t}$. 
    
        We then upper bound the probability that $[n - t]\not \subseteq C$. Let $M_i = |[n - t]\backslash C_i|$. In this language $[n - t]\not \subseteq C$ is equivalent to $M_n > 0$. Since for each $i$, conditional on the vertices $v_1,\dots,v_i$, the graph $G_i$ is uniformly random amongst graphs on vertex set $[2n]\setminus C_i$, it follows from Lemma~\ref{uniform degree dist} that $M_0, \dots, M_n$ form a homogeneous Markov chain with initial state $M_0 = n - t$ and transition probabilities $\mathbb{P}(M_{i + 1} = k|M_i = k) = 2^{-k}$, $\mathbb{P}(M_{i + 1} = k - 1|M_i = k) = 1 - 2^{-k}$. Let $T_k$ be the hitting time of $k$ (were we to extend the Markov chain indefinitely). Clearly, $T_{k - 1} - T_k\sim \textrm{Geo}(1 - 2^{-k})$ are mutually independent (here and below, geometric distributions are supported on $\mathbb{Z}^+$). Hence 
        \begin{align*}
            \mathbb{E}(2^{T_0/2}) &= \mathbb{E}(2^{(T_0 - T_{n - t})/2}) = \prod_{k = 1}^{n - t}\mathbb{E}(2^{(T_{k - 1} - T_k)/2})= \prod_{k = 1}^{n - t}\left(2^{1/2}\cdot\frac{1 - 2^{-k}}{1 - 2^{1/2 - k}}\right)\\
            &= 2^{(n - t)/2}\cdot\frac{\prod_{k = 1}^{n - t}\left(1 - 2^{-k}\right)}{\prod_{k = 1}^{n - t}\left(1 - 2^{1/2-k}\right)}\leq 2^{(n - t)/2}\cdot\frac{1}{1 - 2^{-1/2}}\leq 4\cdot 2^{(n - t)/2}, 
        \end{align*}
        and therefore $\mathbb{P}([n - t]\not \subseteq C) = \mathbb{P}(M_n > 0) = \mathbb{P}(T_0 > n) \leq 4\cdot 2^{-t/2}$ by Markov's inequality.
    
        \item We bound the probability that $|[n]\backslash C| > a$. We similarly construct $M'_i = |[n]\backslash C_i|$, and $M'_0, \dots, M'_n$ form a homogeneous Markov chain with initial state $M'_0 = n$ and the same transition probabilities as $M_0, \dots, M_n$. In this setting, $|[n]\backslash C| > a$ is equivalent to $M'_n > a$. Let $T'_k$ be the hitting time of $k$ (were we to extend the Markov chain indefinitely). Again, $T'_{k - 1} - T'_k\sim \textrm{Geo}(1 - 2^{-k})$ are mutually independent. Hence
        \begin{align*}
            \mathbb{E}(2^{a T'_a}) &= \mathbb{E}(2^{a(T'_a - T'_n)}) = \prod_{k = a + 1}^n\mathbb{E}(2^{a(T'_{k - 1} - T'_k)}) = \prod_{k = a + 1}^n\left(2^a\cdot\frac{1 - 2^{-k}}{1 - 2^{a - k}}\right)\\
            &= 2^{a(n - a)}\cdot\frac{\prod_{k = a + 1}^n\left(1 - 2^{-k}\right)}{\prod_{k = a + 1}^n\left(1 - 2^{a - k}\right)}\leq 2^{a(n - a)}\cdot\frac{1}{\prod_{i = 1}^{\infty}\left(1 - 2^{-i}\right)}\\
            &\leq 2^{a(n - a)}\cdot\frac{1}{\left(1 - 2^{-1}\right)\left(1 - \sum_{i = 2}^{\infty} 2^{-i}\right)} = 4\cdot 2^{a(n - a)}, 
        \end{align*}
        and therefore $\mathbb{P}(|[n]\backslash C| > a) = \mathbb{P}(M'_n > a) = \mathbb{P}(T'_a > n) \leq 4\cdot 2^{-a(a + 1)}\leq 4\cdot 2^{-a^2}$ by Markov's inequality. 
    \end{enumerate}
    Hence $F$ holds with probability at least $1 - 2^{-n + 1} - 2^{-t} - 4\cdot 2^{-t/2} - 4\cdot 2^{-a^2}\geq 1 - 7\cdot 2^{-t/2} - 4\cdot 2^{-a^2}$. 
\end{proof}

We are now ready to prove Proposition \ref{key}.

\begin{proof}[Proof of Proposition \ref{key}]
    Let $G\sim G([2n], 1/2)$. 

    Consider any fixed $(t, a)$-initial subset $D$ and $(t, a)$-terminal subset $\Tilde{D}$. The induced subgraphs $G - D, G - \Tilde{D}$ are linked on $([2n]\backslash D)\cap ([2n]\backslash \Tilde{D})$. Let $s = |([2n]\backslash D)\cap ([2n]\backslash \Tilde{D})|$. Since $[n]$ and $\{n + 1, \dots, 2n\}$ partition $[2n]$, we have $$s\leq |[n]\cap ([2n]\backslash D)| + |\{n + 1, \dots, 2n\}\cap ([2n]\backslash \Tilde{D})| = |[n]\backslash D| + |\{n + 1, \dots, 2n\}\backslash \Tilde{D}|\leq 2a.$$
    Hence, by Lemma \ref{link cor} we have $$\mathbb{P}(G - D, G - \Tilde{D}\text{ are both non-even-degenerate}) = b_{n - s, s}\leq 16\cdot 2^{3s/2}f_n^2\leq 16\cdot 2^{3a}f_n^2.$$

    A $(t, a)$-initial subset $D$ is determined by $[n]\backslash D$ and $\{n + 1, \dots, 2n\}\cap D$. As $|[n]\backslash D|\leq a$ and $[n]\backslash D\subseteq \{n - t + 1, \dots, n\}$, there are at most ${t \choose a} + \dots + {t \choose 0}\leq t^a + 1\leq 2\cdot t^a$ choices of $[n]\backslash D$. Similarly, as $|\{n + 1, \dots, 2n\}\cap D| = |[n]\backslash D| \leq a$ and $\{n + 1, \dots, 2n\}\cap D\subseteq \{n + 1, \dots, n + t\}$, there are at most ${t \choose a} + \dots + {t \choose 0}\leq t^a + 1\leq 2\cdot t^a$ choices of $\{n + 1, \dots, 2n\}\cap D$. Hence, there are at most $4\cdot t^{2a}$ $(t, a)$-initial subsets, and likewise there are at most $4\cdot t^{2a}$ $(t, a)$-terminal subsets. Therefore, by the union bound, 
    \begin{equation*}
        \mathbb{P}\left(\exists (t, a)\text{-initial }D, (t, a)\text{-terminal }\Tilde{D}\text{ s.t. }G - D, G - \Tilde{D}\text{ are both non-even-degenerate}\right)\leq 256\cdot t^{4a}2^{3a}f_n^2. 
    \end{equation*}

    By Lemma \ref{key stage 1}, with probability at least $1 - 7\cdot 2^{-t/2} - 4\cdot 2^{-a^2}$, we may legally remove a $(t, a)$-initial subset of vertices. Likewise, with probability at least $1 - 7\cdot 2^{-t/2} - 4\cdot 2^{-a^2}$, we may legally remove a $(t, a)$-terminal subset of vertices. Let $E$ be the event that we succeed in both, i.e., that there exist a $(t, a)$-initial subset $C$ and a $(t, a)$-terminal subset $\Tilde{C}$ which may (separately) both be legally removed. Then $E$ holds with probability at least $1 - 14\cdot 2^{-t/2} - 8\cdot 2^{-a^2}$. If $G$ is non-even-degenerate and $E$ holds, then both $G - C$ and $G - \Tilde{C}$ are non-even-degenerate.
    
    Hence the probability that $G$ is non-even-degenerate and $E$ holds is at most $256\cdot t^{4a}2^{3a}f_n^2$. Therefore $c_{2n} = \mathbb{P}(G \text{ is non-even-degenerate}) \leq 256\cdot t^{4a}2^{3a}f_n^2 + 14\cdot 2^{-t/2} + 8\cdot 2^{-a^2}$, as needed.
\end{proof}

\subsubsection{Proof of Lemma \ref{link cor}} \label{sec:linked}

In this subsection we prove Lemma \ref{link cor}. We will study how fixing the edges between a set of vertices in our random graph changes the probability that the graph is even-degenerate.

\begin{df}
    For a proper subset of vertices $S$, and a graph $L$ on vertex set $S$, let $G(n, S, L)$ be the distribution of $G$ where $G\sim G(n, 1/2)$ conditioned on $G[S] = L$. Let $c_{n, S, L}$ be the probability that $G$ is non-even-degenerate. 
\end{df}
\begin{df}    
    For $m\geq 1$ and $s\geq 0$, let $c_{m, s}$ be the maximum of $c_{m + s, S, L}$ amongst $S$ with $|S| = s$. Here $m$ is the number of ``unconditioned" vertices. 
\end{df}

To avoid carrying a small additive term throughout the following argument, we shift this term into $c_{m, s}$. 

\begin{df}
    For $m\geq 1$ and $s\geq 0$, let $g_{m, s} = c_{m, s} + 2^{-(m - 1)}$. 
\end{df}

We shall need the following generalisations of Lemma \ref{stochastic forgetfulness}, whose proofs are nearly identical to that of Lemma \ref{stochastic forgetfulness}. 

\begin{lem} \label{conditioned stochastic forgetfulness 1}
    Let $G\sim G([n], S, L)$ conditioned on $\deg(i)$ having parity $d_i$ for each $i \in [n]$ (where $\sum_i d_i$ is even). Then for any $i\notin S$, we have $G - i\sim G(n - 1, S, L)$. 
\end{lem}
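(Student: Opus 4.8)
The plan is to establish a bijection between the relevant probability spaces, exactly as in the proof of Lemma~\ref{stochastic forgetfulness}. Fix any graph $H$ on vertex set $[n]\setminus\{i\}$ with $H[S]=L$ (that is, any graph in the support of $G(n-1,S,L)$). I claim there is a unique graph $G$ on $[n]$ with $G-i=H$ satisfying $G[S]=L$ and $\deg_G(j)\equiv d_j\pmod 2$ for all $j\in[n]$. Since $i\notin S$, the condition $G[S]=L$ holds automatically as soon as $G-i=H$, so the only freedom in choosing $G$ lies in the $n-1$ potential edges between $i$ and the other vertices. For each $j\neq i$ the requirement $\deg_G(j)=\deg_H(j)+\mathbbm{1}[ij\in E(G)]\equiv d_j\pmod 2$ forces whether $ij\in E(G)$; hence all edges at $i$ are pinned down, which gives both existence and uniqueness of $G$.

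It remains to check that these forced choices are consistent with the final constraint $\deg_G(i)\equiv d_i\pmod 2$. This holds because
\[
\deg_G(i)=\sum_{j\neq i}\mathbbm{1}[ij\in E(G)]\equiv\sum_{j\neq i}\big(d_j-\deg_H(j)\big)\equiv\sum_{j\neq i}d_j-2e(H)\equiv\sum_{j\neq i}d_j\pmod 2,
\]
and, since $\sum_{j\in[n]}d_j$ is even by hypothesis, $\sum_{j\neq i}d_j\equiv d_i\pmod 2$. Therefore $G\mapsto G-i$ is a bijection from the set of graphs on $[n]$ with $G[S]=L$ and all prescribed degree parities onto the set of graphs on $[n]\setminus\{i\}$ with $[S]=L$. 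As $G\sim G([n],S,L)$ conditioned on the degree parities is uniform on the former set, its image $G-i$ is uniform on the latter, i.e.\ $G-i\sim G(n-1,S,L)$, as required.

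There is essentially no obstacle here: the argument is a routine adaptation of the proof of Lemma~\ref{stochastic forgetfulness}. The only points to be careful about are that one genuinely needs $i\notin S$ (so that resampling the edges at $i$ does not disturb $G[S]=L$), and that the parity hypothesis that $\sum_i d_i$ is even is precisely what makes the construction consistent — and, incidentally, what guarantees that the conditioning event defining this distribution is non-empty.
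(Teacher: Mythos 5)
Your proof is correct and takes essentially the same approach as the paper: the paper states that this lemma has a proof ``nearly identical to that of Lemma~\ref{stochastic forgetfulness}'', i.e.\ the bijection argument (each $H$ on $[n]\setminus\{i\}$ with $H[S]=L$ extends uniquely to a $G$ on $[n]$ with the prescribed degree parities, using $i\notin S$), which is precisely what you spell out, including the parity consistency check.
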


\begin{lem} \label{conditioned stochastic forgetfulness 2}
    Let $G\sim G([n], S, L)$ conditioned on $\deg(i)$ having parity $d_i$ for each $i \in [n]$ (where $\sum_i d_i$ is even). Then for any $i\in S$, we have $G - i\sim G(n - 1, S\backslash\{i\}, L - i)$ conditioned upon $\deg_{G - i}(i')$ having parity $d_{i'} - L_{i'i}$ for all $i'\in S\backslash\{i\}$. 
\end{lem}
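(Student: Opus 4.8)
The plan is to mimic the proof of Lemma \ref{stochastic forgetfulness} (and its counterpart Lemma \ref{conditioned stochastic forgetfulness 1}): exhibit an explicit bijection between the support of the conditioned distribution of $G$ and the support of the claimed distribution of $G-i$, and observe that it is automatically measure-preserving because each of these conditioned distributions is uniform on its support.

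Concretely, write $S' = S\setminus\{i\}$ and let $\mathcal{A}$ be the set of graphs $G$ on $[n]$ with $G[S] = L$ and $\deg_G(j)\equiv d_j \pmod 2$ for every $j\in [n]$; by hypothesis $G$ is uniform on $\mathcal{A}$. Let $\mathcal{B}$ be the set of graphs $H$ on $[n]\setminus\{i\}$ with $H[S'] = L - i$ and $\deg_H(j)\equiv d_j - L_{ji} \pmod 2$ for every $j\in S'$; this is precisely the support of the distribution in the statement (that of $G(n-1, S', L-i)$ further conditioned on those degree parities). It suffices to show that $G\mapsto G-i$ is a bijection $\mathcal{A}\to\mathcal{B}$. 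For the forward direction, if $G\in\mathcal{A}$ and $H = G-i$, then clearly $H[S'] = L-i$, and for $j\in S'$ we have $\deg_H(j) = \deg_G(j) - G_{ji} = \deg_G(j) - L_{ji}$, since the pair $ji$ lies inside $S$ and is therefore prescribed by $L$; hence $\deg_H(j)\equiv d_j - L_{ji}$ and $H\in\mathcal{B}$.

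For the inverse, given $H\in\mathcal{B}$ we build $G$ on $[n]$ by keeping all edges of $H$, setting $G_{ij} = L_{ij}$ for $j\in S'$, and for each $j\notin S$ choosing $G_{ij}\in\mathbb{F}_2$ to be the unique value with $\deg_H(j) + G_{ij}\equiv d_j$. Then $G[S] = L$ by construction; for $j\notin S$ we have $\deg_G(j)\equiv d_j$ by the choice above; and for $j\in S'$ we have $\deg_G(j) = \deg_H(j) + L_{ij}\equiv (d_j - L_{ji}) + L_{ji} = d_j$. The one remaining check is the parity at $i$ itself: since $\sum_{j\in[n]}\deg_G(j)$ is even while $\deg_G(j)\equiv d_j$ for all $j\neq i$, we get $\deg_G(i)\equiv \sum_{j\in[n]}d_j - \sum_{j\neq i}d_j = d_i$, using that $\sum_j d_j$ is even. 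Thus $G\in\mathcal{A}$, and this assignment is visibly inverse to $G\mapsto G-i$, so we have the desired bijection; pushing the uniform measure on $\mathcal{A}$ forward gives the uniform measure on $\mathcal{B}$, which is exactly the asserted distribution.

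The argument is essentially bookkeeping, and the only slightly delicate point — the sole place the hypothesis that $\sum_i d_i$ is even is used — is verifying that the degree-parity constraint at the removed vertex $i$ is automatically satisfied once the constraints at all other vertices are. The other feature worth isolating in the write-up is why the conditioning on $G-i$ shifts from $d_j$ to $d_j - L_{ji}$ for $j\in S'$: this is exactly because $i\in S$ (in contrast with Lemma \ref{conditioned stochastic forgetfulness 1}), so the adjacency of $i$ to each $j\in S'$ is fixed by $L$ rather than random, and deleting $i$ deterministically changes $\deg(j)$ by $L_{ji}$.
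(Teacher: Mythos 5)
Your proof is correct and is essentially the same argument the paper has in mind: the paper proves Lemma \ref{stochastic forgetfulness} by observing that $G\mapsto G-i$ is a bijection between the two (uniform) supports, and remarks that Lemmas \ref{conditioned stochastic forgetfulness 1} and \ref{conditioned stochastic forgetfulness 2} follow by a nearly identical extension argument. You have simply spelled out the bijection explicitly, including the verification that the degree parity at $i$ comes for free from the handshake lemma together with the evenness of $\sum_j d_j$, which is precisely where that hypothesis enters.
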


\begin{lem} \label{conditioned recurrence}
    For any $m\geq 2$ and $s\geq 1$, we have $g_{m, s}\leq 2^{-s}g_{m - 1, s} + \left(1 - 2^{-s}\right)g_{m - 1, s - 1}$. 
\end{lem}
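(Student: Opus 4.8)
The plan is to mimic the proof of Proposition \ref{decreasing c_n}, but now working inside the conditioned model $G([m+s], S, L)$ with $|S|=s$. Fix a set $S$ with $|S|=s$ and a graph $L$ on $S$ attaining (or nearly attaining) the maximum in the definition of $c_{m,s}$; it suffices to bound $c_{m+s,S,L}$. Let $G\sim G([m+s], S, L)$, so there are $m\geq 2$ unconditioned vertices. Pick one unconditioned vertex, say $i_0\notin S$, and query the parities of all degrees in $G$. By Lemma \ref{uniform degree dist} applied to the resampling argument (the parities of degrees in the conditioned model are still uniform subject to summing to $0$, since flipping a pair of unconditioned edges is degree-preserving-modulo-two in the required way), there is a vertex of even degree \emph{among the unconditioned vertices} with probability at least $1-2^{-s}$: indeed, there are $m$ unconditioned coordinates, and the probability that all of them are odd, given a uniform degree-parity vector of sum $0$, is $2^{-m}\cdot$(something)$\,\le 2^{-s}$ — more carefully, one first queries only the $m$ unconditioned parities, which are i.i.d.\ uniform unless they are the last coordinates to be determined; the clean bound is that the chance that none of the $m$ unconditioned vertices has even degree is at most $2^{-(m-1)}$, and since we want a factor $2^{-s}$ I would instead split as in Proposition \ref{decreasing c_n}: with probability at least $1-2^{-s}$ some unconditioned vertex has even degree, and the residual $2^{-s}$ term is absorbed precisely because $g_{m,s}$ carries the $+2^{-(m-1)}$ buffer.

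Concretely, I would split into two cases after querying degree parities. \textbf{Case 1: some unconditioned vertex $i$ has even degree.} By Lemma \ref{conditioned stochastic forgetfulness 1}, $G-i\sim G([m+s-1], S, L)=G(m-1,s)$-type, so $G-i$ is non-even-degenerate with probability at most $c_{m-1,s}\le g_{m-1,s}$; and if $G-i$ is even-degenerate then so is $G$ (remove $i$ first). \textbf{Case 2: every unconditioned vertex has odd degree}, i.e.\ the unique failure mode. In this case I would remove vertices \emph{from $S$} instead: there are $s\ge 1$ conditioned vertices, and I claim we can still make progress. Actually the cleaner route is: in Case 2, just bound the probability that $G$ is non-even-degenerate by $1$? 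No — that gives $2^{-s}\cdot 1$, and we would need to then reduce it. Instead, the right move in Case 2 is to delete a conditioned vertex $j\in S$ of even degree if one exists, using Lemma \ref{conditioned stochastic forgetfulness 2}, which lands us in $G(m, s-1)$ (one fewer conditioned vertex, same number of unconditioned ones), contributing $g_{m-1,s-1}$ — wait, the parameter count: deleting $j\in S$ gives $m$ unconditioned vertices among $m+s-1$ total, i.e.\ a $G(m, s-1)$ model, not $G(m-1,s-1)$. Let me reconcile with the stated inequality, which has $g_{m-1,s-1}$: so in Case 2 we must additionally delete one unconditioned vertex, or we delete a conditioned vertex and the recursion bottoms out differently. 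The precise bookkeeping is the routine part; the inequality $g_{m,s}\le 2^{-s}g_{m-1,s}+(1-2^{-s})g_{m-1,s-1}$ tells me the intended split is: with probability $\ge 1-2^{-s}$ an unconditioned vertex has even degree (Case 1, giving $g_{m-1,s}$), and otherwise (probability $\le 2^{-s}$) we fall back on the $+2^{-(m-1)}$ buffer and the worse term $g_{m-1,s-1}$, noting $g_{m-1,s}\le g_{m-1,s-1}$ anyway since conditioning on more edges can only help an adversary.

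Putting it together: $c_{m+s,S,L}\le (1-2^{-s})g_{m-1,s} + 2^{-s}\cdot(\text{bound in Case 2})$, and in Case 2 we query the remaining parities and either find another even-degree vertex (reducing to a $G(m-1,\cdot)$-type model, bounded by $g_{m-1,s-1}$) or all degrees are odd, which happens with conditional probability at most $2^{-(m-1)}$; this last $2^{-(m-1)}$ is exactly soaked up when we pass from $c$ to $g$. Hence $c_{m,s}\le (1-2^{-s})c_{m-1,s}+2^{-s}c_{m-1,s-1}+2^{-s}2^{-(m-1)}$, and adding $2^{-(m-1)}$ to both sides and using $2^{-(m-1)}\le (1-2^{-s})2^{-(m-1)}+2^{-s}2^{-(m-1)}$ trivially, together with $2^{-(m-1)}\le 2^{-(m-2)}$ on the right, yields $g_{m,s}\le 2^{-s}g_{m-1,s}+(1-2^{-s})g_{m-1,s-1}$ after checking the constants line up (here one uses $g_{m-1,s}\le g_{m-1,s-1}$ to simplify if needed).

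\textbf{Main obstacle.} The genuine subtlety is the first probabilistic claim: that, after querying all degree parities in the conditioned model $G([m+s],S,L)$, the chance that \emph{no unconditioned vertex} has even degree is at most $2^{-s}$ (equivalently $\le 2^{-(m-1)}$, whichever is needed). The degree-parity vector is no longer uniform over all sum-zero vectors — it is constrained by $L$ — so Lemma \ref{uniform degree dist} does not apply verbatim. I would handle this by the same edge-flipping coupling used in Lemma \ref{uniform degree dist}, but only flipping pairs of \emph{unconditioned} edges (i.e.\ pairs $\{i,j\}$ with $i,j\notin S$, or $i\notin S$, $j$ arbitrary with $ij\notin \binom{S}{2}$): such flips preserve $G[S]=L$ and act transitively enough on the unconditioned part of the degree-parity vector to show the $m$ unconditioned coordinates are uniform subject only to a single linear constraint, giving the required $2^{-(m-1)}$ (hence $\le 2^{-s}$ when $m-1\ge s$, and when $m-1<s$ one adjusts using that $g$ already carries the $2^{-(m-1)}$ term). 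Getting this coupling exactly right, and tracking which of $2^{-s}$ vs $2^{-(m-1)}$ is the operative bound in each case, is where the real care goes; everything else is the template of Proposition \ref{decreasing c_n}.
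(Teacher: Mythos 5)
Your proposal inverts the key case split, and the resulting inequality does not match the lemma. In the paper's argument one first queries all degree parities and distinguishes whether some \emph{conditioned} vertex $i\in S$ has even degree. By the uniform-degree-parity argument (the conditioned model still has a uniform sum-zero parity vector, since flips of edges not inside $S$ suffice to act transitively), the event ``some $i\in S$ has even degree'' has probability exactly $1-2^{-s}$. This is the \emph{favourable} event: removing such an $i$ strips off one unit of conditioning via Lemma~\ref{conditioned stochastic forgetfulness 2}, and then removing one even-degree unconditioned vertex (which exists with probability $\ge 1-2^{-(m-1)}$) lands in the $(m-1,s-1)$ model, giving the bound $(1-2^{-(m-1)})c_{m-1,s-1}+2^{-(m-1)}$. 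The unfavourable event (probability at most $2^{-s}$) is that all of $S$ has odd degree; there one removes only an even-degree unconditioned vertex and lands in the $(m-1,s)$ model. This is precisely why the weight $1-2^{-s}$ attaches to $g_{m-1,s-1}$ and the weight $2^{-s}$ to $g_{m-1,s}$.

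You instead split on whether some \emph{unconditioned} vertex has even degree. That event has probability about $1-2^{-m}$, not $1-2^{-s}$, so already the weights do not come out as claimed. Moreover, your split yields a bound shaped like $(1-2^{-s})g_{m-1,s}+2^{-s}g_{m-1,s-1}$, which has the coefficients swapped relative to the lemma; you notice this and try to patch it by asserting $g_{m-1,s}\le g_{m-1,s-1}$ (``conditioning on more edges can only help an adversary''). This is not justified: $g_{m-1,s}$ and $g_{m-1,s-1}$ are defined for graphs of \emph{different} total sizes ($m+s-1$ versus $m+s-2$ vertices), so the heuristic that more conditioning is worse is not directly applicable, and the paper nowhere uses such a monotonicity. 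Without that monotonicity your version of the inequality does not imply the lemma. So the gap is real: the proof must remove a conditioned vertex first in the common case, not an unconditioned one.
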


\begin{proof}
    Let $n = m + s$. Take $G\sim G(n, S, L)$ where $|S| = s$, it suffices to bound from above the probability that $G$ is non-even-degenerate. 
    
    We query the parities of all degrees in $G$, based on which we distinguish between three cases. 
    
    \begin{itemize}
        \item Case 1: $\deg(i)$ is even for some $i\in S$. By Lemma \ref{uniform degree dist}, this case takes place with probability $1 - 2^{-s}$. 
        
        Remove $i$ and note that $G - i\sim G(n - 1, S\backslash\{i\}, L - i)$ with conditions on the parities of the degrees of vertices in $S\backslash\{i\}$ given by Lemma \ref{conditioned stochastic forgetfulness 2}. 

        Query the parities of all degrees in $G - i$. By Lemma~\ref{uniform degree dist}, with probability at least $1 - 2^{-(m - 1)}$, some $j\notin S$ has even degree in $G-i$. Remove $j$ from $G - i$. By Lemma \ref{conditioned stochastic forgetfulness 1}, $G-i-j$ has distribution $G(n - 2, S\backslash\{i\}, L - i)$. 
        
        Hence in this case, $G$ is non-even-degenerate with probability at most $\left(1 - 2^{-(m - 1)}\right)c_{m - 1, s - 1} + 2^{-(m - 1)}$. 
        
        \item Case 2: $\deg(i)$ is odd for all $i\in S$, but $\deg(j)$ is even for some $j\notin S$. By Lemma \ref{uniform degree dist}, this case takes place with probability at least $2^{-s} - 2^{-(n - 1)}$. 
        
        Remove $j$ and note that $G - j\sim G(n - 1, S, L)$. Hence in this case, $G$ is non-even-degenerate with probability at most $c_{m - 1, s}$. 
        \item Case 3: all degrees are odd in $G$. By Lemma \ref{uniform degree dist}, this case takes place with probability at most $2^{-(n - 1)}$. 
    \end{itemize}
    Combining the conditional probabilities in each case, we have 
    \begin{align*}
        c_{n, S, L} \leq & \left(1 - 2^{-s}\right)\left(\left(1 - 2^{-(m - 1)}\right)c_{m - 1, s - 1} + 2^{-(m - 1)}\right)\\
        &+ \left(2^{-s} - 2^{-(n - 1)}\right)c_{m - 1, s} + 2^{-(n - 1)} \\
        = & \left(1 - 2^{-(m - 1)}\right)\left(2^{-s}c_{m - 1, s} + \left(1 - 2^{-s}\right)c_{m - 1, s - 1}\right) + 2^{-(m - 1)}\\
        \leq & 2^{-s}c_{m - 1, s} + \left(1 - 2^{-s}\right)c_{m - 1, s - 1} + 2^{-(m - 1)}, 
    \end{align*}
    for all $S$ with $|S| = s$ and graph $L$ on $S$. Hence $c_{m, s}\leq 2^{-s}c_{m - 1, s} + \left(1 - 2^{-s}\right)c_{m - 1, s - 1} + 2^{-(m - 1)}$, and the desired conclusion follows. 
\end{proof}

Taking $g_{m, s} = 2$ when $m\leq 0$, the inequality in Lemma \ref{conditioned recurrence} holds for all $m\in \mathbb{Z}$ and $s\geq 1$. Under this convention, iterating Lemma \ref{conditioned recurrence} gives the following result.

\begin{cor} \label{cor 1}
    For any $m\in\mathbb{Z}, s\geq 1$, we have $g_{m, s}\leq\mathbb{E}(g_{m - X_s, s - 1})$ where $X_s\sim Geo(1 - 2^{-s})$.
\end{cor}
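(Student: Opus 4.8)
The plan is to iterate the recursion from Lemma~\ref{conditioned recurrence} infinitely often and then pass to a limit. Recall that, under the convention $g_{m,s}=2$ for $m\leq 0$, Lemma~\ref{conditioned recurrence} gives $g_{m,s}\leq 2^{-s}g_{m-1,s}+(1-2^{-s})g_{m-1,s-1}$ for every $m\in\mathbb{Z}$ and every $s\geq 1$. Fix $s\geq 1$. First I would prove, by induction on $N\geq 1$, that for all $m\in\mathbb{Z}$,
\[
    g_{m,s}\leq 2^{-Ns}\,g_{m-N,s}+\sum_{k=1}^{N}2^{-(k-1)s}\bigl(1-2^{-s}\bigr)\,g_{m-k,s-1}.
\]
The base case $N=1$ is precisely the extended recursion above, and for the inductive step one applies that recursion once more to the term $g_{m-N,s}$ and regroups; the newly created summand $2^{-Ns}(1-2^{-s})g_{m-N-1,s-1}$ is exactly the $k=N+1$ term.

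Next I would record the crude uniform bound $g_{m',s}\leq 2$, valid for every $m'\in\mathbb{Z}$: for $m'\leq 0$ this is the convention, and for $m'\geq 1$ it holds because $c_{m',s}$ is a probability and $2^{-(m'-1)}\leq 1$. Hence the remainder term satisfies $2^{-Ns}g_{m-N,s}\leq 2\cdot 2^{-Ns}$, which tends to $0$ as $N\to\infty$ since $s\geq 1$. Moreover, all summands in the display are nonnegative, so the partial sums $\sum_{k=1}^{N}2^{-(k-1)s}(1-2^{-s})g_{m-k,s-1}$ are nondecreasing in $N$ and, by the displayed inequality, bounded above by $g_{m,s}<\infty$; therefore they converge.

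Letting $N\to\infty$ in the displayed inequality then gives
\[
    g_{m,s}\leq \sum_{k=1}^{\infty}2^{-(k-1)s}\bigl(1-2^{-s}\bigr)\,g_{m-k,s-1}.
\]
Since $X_s\sim\mathrm{Geo}(1-2^{-s})$, supported on $\mathbb{Z}^+$, has $\mathbb{P}(X_s=k)=2^{-(k-1)s}(1-2^{-s})$, the right-hand side is exactly $\mathbb{E}(g_{m-X_s,s-1})$, which is the assertion of the corollary. I do not anticipate any genuine obstacle here; the only points that need a moment's care are verifying that the tail term $2^{-Ns}g_{m-N,s}$ vanishes (handled by the uniform bound $g_{m',s}\leq 2$, which is also why extending the recursion to all $m\in\mathbb{Z}$ via the convention $g_{m,s}=2$ is convenient) and checking that the coefficients produced by the iteration coincide with the geometric mass function.
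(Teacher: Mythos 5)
Your proof is correct and follows essentially the same route as the paper: iterate Lemma~\ref{conditioned recurrence} by induction, use the uniform bound $g_{m',s}\leq 2$ to kill the tail term $2^{-Ns}g_{m-N,s}$, and recognise the resulting coefficients as the $\mathrm{Geo}(1-2^{-s})$ mass function. One small slip: you justify convergence of the partial sums by saying that the displayed inequality bounds them above by $g_{m,s}$, but that inequality runs the wrong way (it gives a \emph{lower} bound on the partial sum); the correct and immediate reason is that $g_{m-k,s-1}\leq 2$ uniformly, so the series is dominated by $2\sum_{k\geq 1}2^{-(k-1)s}(1-2^{-s})=2$.
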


\begin{proof}
    We prove by induction on $k\in\mathbb{Z}_{\geq 0}$ that
    \begin{equation*}
        g_{m, s}\leq \sum_{i = 1}^k \left(\left(1 - 2^{-s}\right)2^{-(i - 1)s}\cdot g_{m - i, s - 1}\right) + 2^{-ks}\cdot g_{m - k, s}. 
    \end{equation*}
    The base case $k = 0$ is immediate. For the induction step, we assume that the inequality holds for some $k\in\mathbb{Z}_{\geq 0}$. Applying (the above generalisation of) Lemma \ref{conditioned recurrence} to $g_{m - k, s}$, we have
    \begin{align*}
        g_{m, s}&\leq \sum_{i = 1}^k \left(\left(1 - 2^{-s}\right)2^{-(i - 1)s}\cdot g_{m - i, s - 1}\right) + 2^{-ks}\cdot g_{m - k, s}\\
        &\leq \sum_{i = 1}^k \left(\left(1 - 2^{-s}\right)2^{-(i - 1)s}\cdot g_{m - i, s - 1}\right) + 2^{-ks}\cdot \left(2^{-s}g_{m - k - 1, s} + \left(1 - 2^{-s}\right)g_{m - k - 1, s - 1}\right)\\
        &= \sum_{i = 1}^{k + 1} \left(\left(1 - 2^{-s}\right)2^{-(i - 1)s}\cdot g_{m - i, s - 1}\right) + 2^{-(k + 1)s}\cdot g_{m - k - 1, s}, 
    \end{align*}
    as needed, completing the induction. Noting that $g_{m, s}$ is uniformly bounded above by $2$, we may take the limit $k\rightarrow\infty$, giving
    \begin{align*}
        g_{m, s}&\leq \sum_{i = 1}^\infty \left(\left(1 - 2^{-s}\right)2^{-(i - 1)s}\cdot g_{m - i, s - 1}\right)\\
        &= \sum_{i = 1}^\infty \left(\mathbb{P}(X_s = i)\cdot g_{m - i, s - 1}\right)\\
        &=\mathbb{E}(g_{m - X_s, s - 1}),
    \end{align*}
    for $X_s\sim Geo(1 - 2^{-s})$. 
\end{proof}

Further iterating, we obtain the following result bounding the $g_{m, s}$'s in terms of the $c_n$'s. 

\begin{cor} \label{g** in terms of c*}
    For any integers $m\geq 1$ and $s\geq 0$, we have $$g_{m, s}\leq \mathbb{E}\left(\mathbbm{1}_{X< m}\left(c_{m - X} + 2^{-(m - X - 1)}\right)\right) + 2\mathbb{P}(X\geq m),$$ where $X = \sum_{t = 1}^s X_t$ with $X_t \sim \textrm{Geo}(1 - 2^{-t})$ independent. 
\end{cor}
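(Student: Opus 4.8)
The plan is to iterate Corollary~\ref{cor 1} exactly $s$ times, stripping off one geometric variable per application, and then to plug in the explicit value of the base case $g_{\cdot,0}$. Note first that for $m'\geq 1$ we have $g_{m',0}=c_{m',0}+2^{-(m'-1)}=c_{m'}+2^{-(m'-1)}$ (there is no conditioning when $s=0$), while the convention $g_{m',s}=2$ for $m'\leq 0$ gives $g_{m',0}=2$ when $m'\leq 0$. So it will suffice to establish the bound
\begin{equation*}
    g_{m,s}\leq \mathbb{E}\bigl(g_{m-X,\,0}\bigr),\qquad X=X_1+\dots+X_s,\ X_t\sim\textrm{Geo}(1-2^{-t})\text{ independent},
\end{equation*}
and then split the expectation according to whether $m-X\geq 1$ (equivalently $X<m$, since $X$ is integer-valued) or $m-X\leq 0$.

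To obtain this bound I would prove by induction on $k\in\{0,1,\dots,s\}$ the intermediate inequality
\begin{equation*}
    g_{m,s}\leq \mathbb{E}\bigl(g_{m-(X_s+X_{s-1}+\dots+X_{s-k+1}),\,s-k}\bigr),
\end{equation*}
where $X_s,X_{s-1},\dots,X_{s-k+1}$ are independent with $X_t\sim\textrm{Geo}(1-2^{-t})$. The case $k=0$ is trivial. For the inductive step (valid for $k\leq s-1$), one conditions on $X_s,\dots,X_{s-k+1}$, so that the quantity $m'=m-(X_s+\dots+X_{s-k+1})$ is a fixed integer, and applies Corollary~\ref{cor 1} at level $s-k\geq 1$ to $g_{m',s-k}$, obtaining $g_{m',s-k}\leq \mathbb{E}_{X_{s-k}}(g_{m'-X_{s-k},\,s-k-1})$ pointwise; taking expectations over $X_s,\dots,X_{s-k+1}$ (with $X_{s-k}$ taken independent of them) gives the statement for $k+1$. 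Taking $k=s$ yields $g_{m,s}\leq\mathbb{E}(g_{m-X,0})$ with $X=X_1+\dots+X_s$.

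Finally, substituting the value of $g_{\cdot,0}$: on $\{X<m\}$ we have $m-X\geq 1$, so $g_{m-X,0}=c_{m-X}+2^{-(m-X-1)}$, while on $\{X\geq m\}$ we have $g_{m-X,0}=2$; hence $\mathbb{E}(g_{m-X,0})=\mathbb{E}\bigl(\mathbbm{1}_{X<m}(c_{m-X}+2^{-(m-X-1)})\bigr)+2\mathbb{P}(X\geq m)$, which is exactly the claimed inequality. The only subtle point — and the closest thing to an obstacle — is the bookkeeping of independence: each use of Corollary~\ref{cor 1} introduces a fresh geometric variable, and since at each stage the inequality holds pointwise in the already-revealed variables, one is free to couple the new variable so that it is independent of the past. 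One should also check that Corollary~\ref{cor 1} is genuinely legitimate when $m'\leq 0$; under the convention $g_{m',s'}=2$ this is fine, since then $m'-X_{s-k}\leq 0$ as well and both sides equal $2$. Everything else is a routine computation.
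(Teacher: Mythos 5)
Your proof is correct and follows essentially the same route as the paper: iterate Corollary~\ref{cor 1} down to level $0$ to get $g_{m,s}\leq\mathbb{E}(g_{m-X,0})$, then split according to the sign of $m-X$ using the explicit formula for $g_{\cdot,0}$. Your extra care about independence bookkeeping and the $m'\leq 0$ convention is sound but already handled by the paper's declaration that Corollary~\ref{cor 1} holds for all $m\in\mathbb{Z}$.
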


\begin{proof}
    By Corollary \ref{cor 1}, we have
    \begin{align*}
        g_{m, s}&\leq\mathbb{E}(g_{m - X_s, s - 1})\\
        &\leq \mathbb{E}(g_{m - X_s - X_{s - 1}, s - 2})\\
        &\leq \cdots\\
        &\leq \mathbb{E}(g_{m - X_s - X_{s - 1} - \cdots - X_1, 0})\\
        &= \mathbb{E}(g_{m - X, 0}). 
    \end{align*}
    As $g_{n, 0} = \begin{cases}
        c_n + 2^{-(n - 1)} & \text{ if } n > 0, \\
        2 & \text{ otherwise,}
    \end{cases}$, we have
    \begin{align*}
        g_{m, s}&\leq \mathbb{E}(g_{m - X, 0})\\
        & = \mathbb{E}\left(\mathbbm{1}_{X < m}\left(c_{m - X} + 2^{-(m - X - 1)}\right)\right) + 2\mathbb{P}(X\geq m),
    \end{align*}
    as needed. 
\end{proof}

\begin{lem} \label{g** in terms of f*}
    For any integers $m\geq 1$ and $s\geq 0$, we have $g_{m, s}\leq 16\cdot 2^s f_m$. 
\end{lem}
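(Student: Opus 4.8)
The plan is to combine the recursive bound of Corollary~\ref{g** in terms of c*} with a single moment-generating-function computation. That corollary states
\[
g_{m,s}\le \mathbb{E}\left(\mathbbm{1}_{X<m}\bigl(c_{m-X}+2^{-(m-X-1)}\bigr)\right)+2\,\mathbb{P}(X\ge m),
\]
where $X=\sum_{t=1}^{s}X_t$ with the $X_t\sim\mathrm{Geo}(1-2^{-t})$ independent. The whole task is to bound this right-hand side by $16\cdot 2^{s}f_m$, and the idea is to replace every term by a small multiple of $f_m\cdot 2^{X/2}$ and then evaluate $\mathbb{E}(2^{X/2})$.

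First I would extract from the definition of $f$ the two facts $f_n\ge 2^{-n/6}\ge 2^{-n/2}$ and $c_{n'}\le 2^{(n-n')/2}f_n$ for $n'\le n$. On the event $X<m$ we have $1\le m-X\le m$, so taking $n'=m-X$ in the second fact gives $c_{m-X}\le 2^{X/2}f_m$, and moreover $2^{-(m-X-1)}=2\cdot 2^{-(m-X)}\le 2\cdot 2^{-(m-X)/2}=2\cdot 2^{X/2}2^{-m/2}\le 2\cdot 2^{X/2}f_m$; hence $c_{m-X}+2^{-(m-X-1)}\le 3f_m 2^{X/2}$ there. For the tail, Markov's inequality applied to $2^{X/2}$ gives $\mathbb{P}(X\ge m)\le 2^{-m/2}\mathbb{E}(2^{X/2})\le f_m\,\mathbb{E}(2^{X/2})$. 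Feeding these into the displayed inequality yields $g_{m,s}\le 3f_m\,\mathbb{E}(\mathbbm{1}_{X<m}2^{X/2})+2f_m\,\mathbb{E}(2^{X/2})\le 5f_m\,\mathbb{E}(2^{X/2})$.

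It then remains to bound $\mathbb{E}(2^{X/2})=\prod_{t=1}^{s}\mathbb{E}(2^{X_t/2})$ (by independence). The geometric moment generating function gives $\mathbb{E}(2^{X_t/2})=(1-2^{-t})\sqrt2/(1-2^{1/2-t})$, which is finite for every $t\ge1$ — this is precisely why I would use the exponent $X/2$ rather than $X$, since at exponent $1$ the $t=1$ factor already diverges. A short computation shows $\mathbb{E}(2^{X_1/2})=1+\sqrt2<3$ and $\mathbb{E}(2^{X_t/2})<2$ for all $t\ge2$ (the latter being equivalent to $2^{1/2-t}<2-\sqrt2$). Hence $\mathbb{E}(2^{X/2})\le(1+\sqrt2)\cdot 2^{s-1}\le 2^{s+1}$ for $s\ge1$, while $\mathbb{E}(2^{X/2})=1\le 2^{s+1}$ for $s=0$; in either case $g_{m,s}\le 5f_m\cdot 2^{s+1}=10\cdot 2^{s}f_m\le 16\cdot 2^{s}f_m$, as required.

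I do not anticipate a real obstacle: the argument is essentially the recursive bound plus one exponential-moment estimate, with comfortable slack in the constant. The only points needing care are the choice of the exponent $1/2$ (large enough for $2^{X/2}$ to dominate $c_{m-X}$ via the definition of $f$, small enough to keep all the $\mathbb{E}(2^{X_t/2})$ finite), and the bookkeeping that the factor $5$ from the three terms of Corollary~\ref{g** in terms of c*}, times the bound $2^{s+1}$ on $\prod_t\mathbb{E}(2^{X_t/2})$, stays under $16\cdot 2^{s}$; the floor $2^{-n/6}$ built into $f_n$ is exactly what allows the additive term $2^{-(m-X-1)}$ and the tail $2\,\mathbb{P}(X\ge m)$ to be absorbed into $f_m$.
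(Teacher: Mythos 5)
Your proof is correct and follows essentially the same route as the paper: start from Corollary~\ref{g** in terms of c*}, use the definition of $f_m$ to get $c_{m-X}\le 2^{X/2}f_m$ on $\{X<m\}$, bound the tail $\mathbb{P}(X\ge m)$ by Markov applied to $2^{X/2}$, and evaluate $\mathbb{E}(2^{X/2})=\prod_t\mathbb{E}(2^{X_t/2})$ via the geometric moment generating function. The one place you genuinely simplify is the middle term $2^{-(m-X-1)}$: you absorb it directly into $f_m\cdot 2^{X/2}$ using the chain $2^{-(m-X)}\le 2^{-(m-X)/2}=2^{X/2}2^{-m/2}$ and $2^{-m/2}\le 2^{-m/6}\le f_m$, whereas the paper instead leaves it as $2^{-(m-1)}\mathbb{E}(\mathbbm{1}_{X<m}2^X)$, splits $X$ into $X_1+Y$ to keep $\mathbb{E}(\mathbbm{1}_{X_1<m}2^{X_1})=m-1$ finite, bounds $\mathbb{E}(2^Y)\le 2^s$ separately, and absorbs the resulting $(m-1)2^{s-m+1}$ into $16\cdot 2^s f_m$ at the end using $f_m\ge 2^{-m/6}$. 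Your handling saves that split and the extra moment $\mathbb{E}(2^Y)$; the paper's tighter $\mathbb{E}(2^{X/2})\le 4\cdot 2^{s/2}$ versus your $\le 2^{s+1}$ is slack that neither of you actually needs for the target constant. All numerical checks in your argument (the value $1+\sqrt2$ for $t=1$, the bound $\mathbb{E}(2^{X_t/2})<2$ for $t\ge2$ via $2^{1/2-t}<2-\sqrt2$, the $s=0$ case) are correct, so there is no gap.
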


\begin{proof}
    By Corollary \ref{g** in terms of c*}, we have $g_{m, s}\leq\mathbb{E}\left(\mathbbm{1}_{X< m}\left(c_{m - X} + 2^{-(m - X - 1)}\right)\right) + 2\mathbb{P}(X\geq m)$ where $X = \sum_{t = 1}^s X_t$ with $X_t \sim \textrm{Geo}(1 - 2^{-t})$ independent. By the definition of $f_m$, we have $c_{m - X}\leq 2^{X/2}f_m$ whenever $X<m$. Therefore
    \begin{align*}
        g_{m, s}&\leq\mathbb{E}\left(\mathbbm{1}_{X < m}\left(c_{m - X} + 2^{-(m - X - 1)}\right)\right) + 2\mathbb{P}(X\geq m)\\
        &\leq f_m\mathbb{E}\left(2^{X/2}\right) + 2^{-(m - 1)}\mathbb{E}\left(\mathbbm{1}_{X < m}2^X\right) + 2\mathbb{P}(X\geq m). 
    \end{align*}
    Let $Y = \sum_{t = 2}^s X_t$. We calculate the moments $\mathbb{E}(2^Y) = \prod_{t = 2}^s \mathbb{E}(2^{X_t}) = \prod_{t = 2}^s \left(2\cdot\frac{1 - 2^{-t}}{1 - 2^{1 - t}}\right)\leq 2^s$ and $\mathbb{E}(2^{X/2}) = \prod_{t = 1}^s \mathbb{E}(2^{X_t/2}) = \prod_{t = 1}^s \left(2^{1/2}\cdot\frac{1 - 2^{-t}}{1 - 2^{1/2 - t}}\right)\leq \left(1 - 2^{-1/2}\right)^{-1}2^{s/2}\leq 4\cdot 2^{s/2}$. Markov's inequality now gives $\mathbb{P}(X \geq m)\leq 4\cdot 2^{(s - m)/2}$. Hence
    \begin{align*}
        g_{m, s}&\leq f_m\mathbb{E}\left(2^{X/2}\right) + 2^{-(m - 1)}\mathbb{E}\left(\mathbbm{1}_{X < m}2^X\right) + 2\mathbb{P}(X>m)\\
        &\leq 4\cdot 2^{s/2}f_m + 2^{-(m - 1)}\mathbb{E}\left(\mathbbm{1}_{X_1 < m}2^{X_1}\right)\mathbb{E}(2^Y) + 8\cdot 2^{(s - m)/2}\\
        &\leq 4\cdot 2^{s/2}f_m + (m-1)2^{s - m + 1} + 8\cdot 2^{(s - m)/2}\\
        &\leq 16\cdot 2^s f_m, 
    \end{align*}
    where the last inequality uses $f_m\geq 2^{-m/6}$.
\end{proof}

\begin{lem} \label{decreasing f_n}
    For any $n\geq 1$, we have $2^{-1/2}f_n\leq f_{n+1}\leq f_n + 2^{-n}$. 
\end{lem}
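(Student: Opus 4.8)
The plan is to prove both inequalities directly from the definition $f_n = \max\left(2^{-n/6}, \max_{n'\leq n} c_{n'}2^{(n'-n)/2}\right)$, by expanding the maxima and splitting into cases according to which term realises the relevant maximum. The only non-elementary input needed is Proposition \ref{decreasing c_n}, which enters in exactly one case.

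For the lower bound $2^{-1/2}f_n\leq f_{n+1}$, I would multiply the definition of $f_n$ through by $2^{-1/2}$ and compare the resulting terms with those defining $f_{n+1}$. The term $2^{-n/6}$ contributes $2^{-1/2-n/6}$, and since $1/2\geq 1/6$ we have $2^{-1/2-n/6}\leq 2^{-(n+1)/6}\leq f_{n+1}$. Each term $c_{n'}2^{(n'-n)/2}$ with $n'\leq n$ contributes $c_{n'}2^{(n'-n-1)/2}$, which is precisely one of the terms in the maximum defining $f_{n+1}$ (the one corresponding to the same index $n'$, which still satisfies $n'\leq n+1$), hence is at most $f_{n+1}$. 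Taking the maximum over all these contributions yields $2^{-1/2}f_n\leq f_{n+1}$.

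For the upper bound $f_{n+1}\leq f_n+2^{-n}$, I would look at which term attains the maximum defining $f_{n+1}$. If it is $2^{-(n+1)/6}$, then it is at most $2^{-n/6}\leq f_n$. If it is $c_{n'}2^{(n'-n-1)/2}$ for some $n'\leq n$, then it equals $2^{-1/2}\cdot c_{n'}2^{(n'-n)/2}\leq 2^{-1/2}f_n\leq f_n$. The only remaining case is $n'=n+1$, where the term equals $c_{n+1}$; here I would apply Proposition \ref{decreasing c_n} to get $c_{n+1}\leq (1-2^{-n})c_n+2^{-n}\leq c_n+2^{-n}\leq f_n+2^{-n}$, using that $c_n\leq f_n$ (which holds since the $n'=n$ term in the maximum defining $f_n$ is exactly $c_n$). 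In every case $f_{n+1}\leq f_n+2^{-n}$.

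There is no genuine obstacle here: the argument is pure bookkeeping with the definition of $f_n$, and the only external fact used is the already-established recursion $c_{n+1}\leq(1-2^{-n})c_n+2^{-n}$ of Proposition \ref{decreasing c_n}, which is needed solely to control the single "new" term indexed by $n'=n+1$ appearing in $f_{n+1}$ but not in $f_n$.
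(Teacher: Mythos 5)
Your proof is correct and essentially identical to the paper's: both prove the lower bound by comparing the terms in the maxima after multiplying by $2^{-1/2}$, and both prove the upper bound by splitting off the new $n'=n+1$ term and controlling it with Proposition \ref{decreasing c_n} via $c_{n+1}\leq c_n+2^{-n}\leq f_n+2^{-n}$. The only difference is presentational (you enumerate cases while the paper collapses them into a single chained inequality).
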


\begin{proof}
    For the lower bound we have $f_{n + 1}\geq \max_{n'\leq n + 1}c_{n'}2^{(n' - n - 1)/2}\geq 2^{-1/2}\max_{n'\leq n}c_{n'}2^{(n' - n)/2}$, and $f_{n + 1}\geq 2^{-(n + 1)/6}\geq 2^{-1/2}2^{-n/6}$. Hence $f_{n + 1}\geq 2^{-1/2}\max\left(2^{-n/6}, \max_{n'\leq n}c_{n'}2^{(n' - n)/2}\right) = 2^{-1/2}f_n$. 

    For the upper bound, we have 
    \begin{align*}
        f_{n + 1} &= \max\left(2^{-(n + 1)/6}, \max_{n'\leq n + 1}c_{n'}2^{(n' - n - 1)/2}\right)\\
        &\leq \max\left(2^{-n/6}, \max_{n'\leq n}c_{n'}2^{(n' - n)/2}, c_{n + 1}\right)\\
        &=\max\left(f_n, c_{n + 1}\right), 
    \end{align*}
    Hence it suffices to show $c_{n + 1}\leq f_n + 2^{-n}$. This follows from Proposition \ref{decreasing c_n}, as $c_{n + 1}\leq \left(1 - 2^{-n}\right)c_n + 2^{-n}\leq f_n + 2^{-n}$.
\end{proof}

We are now ready to prove Lemma \ref{link cor}.

\begin{proof}[Proof of Lemma \ref{link cor}]
    Let $G, G'$ be graphs on vertex sets $V, V'$, respectively, and linked on $S$ where $|S| = s, |V| = |V'| = m + s$. For any fixed graph $H$ on vertex set $V$, we have 
    \begin{align*}
        &\quad\mathbb{P}\left(G' \text{ is non-even-degenerate}|G = H\right) \\
        &= \mathbb{P}\left(G' \text{ is non-even-degenerate}|G'[S] = H[S]\right)\\
        &\leq c_{m, s}. 
    \end{align*}
    Hence 
    \begin{align*}
        b_{m, s} &= \mathbb{P}\left(G, G' \text{ are both non-even-degenerate}\right)\\
        &\leq c_{m, s}\mathbb{P}\left(G \text{ is non-even-degenerate}\right)\\
        &= c_{m, s}c_{m + s}\\
        &\leq g_{m, s}f_{m + s}\\
        &\leq 16\cdot 2^sf_mf_{m + s}, 
    \end{align*}
    where the last inequality uses Lemma \ref{g** in terms of f*}. Also, Lemma \ref{decreasing f_n} gives $f_m\leq 2^{s/2}f_{m + s}$ from which the desired result follows. 
\end{proof}

\subsubsection{Solving the recursion} \label{recursion solving}

In this subsection we ``solve" the recursion established in Proposition \ref{key}. First, we turn the inequality of Proposition \ref{key} into a bound for $f_{2n}$ in terms of $f_n$.

\begin{lem} \label{key cor}
    For any positive integer $n,a,t$ with $t\leq \frac{2}{3}n$ and $a < t/2$, we have 
    \begin{equation*}
        f_{2n} \leq 600\cdot t^{4a}2^{3a}f_n^2 + 30\cdot 2^{-t/2} + 20\cdot 2^{-a^2}. 
    \end{equation*}
\end{lem}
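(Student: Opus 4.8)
The plan is to deduce Lemma \ref{key cor} from Proposition \ref{key} by converting the bound on $c_{2n}$ into a bound on $f_{2n}$, using only the definition $f_{2n}=\max\bigl(2^{-2n/6},\max_{n'\le 2n}c_{n'}2^{(n'-2n)/2}\bigr)$ and the already-established control on how $f$ and $c$ relate (Lemma \ref{decreasing f_n} and Proposition \ref{decreasing c_n}). Concretely, $f_{2n}$ is the maximum of three types of quantities: the ``seed'' term $2^{-n/3}$; the terms $c_{n'}2^{(n'-2n)/2}$ for $n'\le n$; and the terms $c_{n'}2^{(n'-2n)/2}$ for $n<n'\le 2n$. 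I would bound each of the three in turn.

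First, the seed term: since $t\le \tfrac23 n$ we have $2^{-t/2}\ge 2^{-n/3}$, so $2^{-n/3}\le 2^{-t/2}$, which is dominated by the $30\cdot 2^{-t/2}$ term on the right-hand side. Second, for $n'\le n$, by definition of $f_n$ we have $c_{n'}2^{(n'-n)/2}\le f_n$, hence $c_{n'}2^{(n'-2n)/2}=c_{n'}2^{(n'-n)/2}2^{-n/2}\le f_n 2^{-n/2}\le f_n^2 \cdot (f_n^{-1}2^{-n/2})$; since $f_n\ge 2^{-n/6}$ we get $f_n^{-1}2^{-n/2}\le 2^{-n/3}\le 1$, so these terms are at most $f_n^2$, comfortably absorbed into the first term on the right. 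Third, and this is the main point, for $n<n'\le 2n$ I would use Lemma \ref{decreasing f_n} (or rather the resulting chain of inequalities) to compare $c_{n'}$ with $c_{2n}$: each single step changes $c$ by at most $2^{-k}$ (Proposition \ref{decreasing c_n} gives $c_{k+1}\le (1-2^{-k})c_k+2^{-k}\le c_k+2^{-k}$), but we need the reverse direction, i.e.\ to go from $c_{n'}$ up to $c_{2n}$ we iterate $c_{k+1}\le c_k+2^{-k}$ across $k=n',\dots,2n-1$, so $c_{n'}\le c_{2n}+\sum_{k=n'}^{2n-1}2^{-k}\le c_{2n}+2^{-n'+1}\le c_{2n}+2^{1-n}$. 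Then $c_{n'}2^{(n'-2n)/2}\le (c_{2n}+2^{1-n})2^{(n'-2n)/2}\le (c_{2n}+2^{1-n})\cdot 1 = c_{2n}+2^{1-n}$, since $n'\le 2n$ makes the exponential factor at most $1$. So $\max_{n<n'\le 2n}c_{n'}2^{(n'-2n)/2}\le c_{2n}+2^{1-n}$, and $2^{1-n}$ is again dominated by $2^{-t/2}$ (as $t\le\tfrac23 n\le n$, so $2^{-t/2}\ge 2^{-n/2}\ge 2^{1-n}$ for $n$ large, with small $n$ handled by the slack in the constants).

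Combining the three cases, $f_{2n}\le \max(2^{-n/3},\,f_n^2,\,c_{2n}+2^{1-n})$, and plugging in the bound on $c_{2n}$ from Proposition \ref{key} (valid since the hypotheses $a\le t\le n$ and $a<n/2$ follow from $a<t/2$ and $t\le \tfrac23 n$) gives
\begin{equation*}
    f_{2n}\le 256\cdot t^{4a}2^{3a}f_n^2+14\cdot 2^{-t/2}+8\cdot 2^{-a^2}+2^{1-n}+f_n^2+2^{-n/3}.
\end{equation*}
Absorbing $f_n^2\le t^{4a}2^{3a}f_n^2$ into the first term and bounding $2^{1-n}+2^{-n/3}$ by a constant multiple of $2^{-t/2}$ (using $t\le \tfrac23 n$) yields the claimed $600\cdot t^{4a}2^{3a}f_n^2+30\cdot 2^{-t/2}+20\cdot 2^{-a^2}$; the generous constants $600,30,20$ versus $256,14,8$ leave ample room for these crude estimates and for verifying the finitely many small-$n$ cases directly (noting $f_n\le 2$ always, or even tracking that $c_1=c_2=c_3=0$).

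I expect the main obstacle — though it is more bookkeeping than genuine difficulty — to be the third case: one must be careful that we are using the \emph{upper} bound $c_{n'}\le c_{2n}+O(2^{-n})$ obtained by iterating Proposition \ref{decreasing c_n} in the increasing direction (which only loses an exponentially small additive term), rather than mistakenly trying to use monotonicity of $c_n$, which is not literally available. A secondary point requiring a line of care is checking that the hypothesis chain ``$a<t/2$ and $t\le \tfrac23 n$'' implies the hypotheses ``$a\le t\le n$ and $a<n/2$'' needed to invoke Proposition \ref{key}; this is immediate since $a<t/2<t\le \tfrac23 n<n$ and $a<t/2\le n/3<n/2$. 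Everything else is routine manipulation of the defining maximum for $f$.
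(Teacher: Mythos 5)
Your first two cases are fine (the bound $c_{n'}2^{(n'-2n)/2}\le f_n 2^{-n/2}\le f_n^2$ for $n'\le n$ via $f_n\ge 2^{-n/6}$, and $2^{-n/3}\le 2^{-t/2}$ from $t\le \tfrac23 n$), and the hypothesis-checking for invoking Proposition \ref{key} is correct. But the third case, which you yourself flag as the main point, contains an inequality that goes the wrong way, and the argument does not recover from it.

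You write that iterating $c_{k+1}\le c_k+2^{-k}$ (from Proposition \ref{decreasing c_n}) over $k=n',\dots,2n-1$ gives $c_{n'}\le c_{2n}+\sum_{k=n'}^{2n-1}2^{-k}$. It does not. Iterating $c_{k+1}\le c_k+2^{-k}$ yields $c_{2n}\le c_{n'}+\sum_{k=n'}^{2n-1}2^{-k}$, which is an \emph{upper} bound on $c_{2n}$ and hence only a \emph{lower} bound on $c_{n'}$. There is no upper bound on $c_{n'}$ in terms of $c_{2n}$ available from Proposition \ref{decreasing c_n}, and indeed one should expect $c_{n'}$ to typically exceed $c_{2n}$ by far more than an additive $2^{-n}$ (the whole point of the section is that $c_n$ decays exponentially, so $c_{n'}$ for $n'\approx n$ is roughly $e^{-\Omega(n)}$ while $c_{2n}\approx e^{-\Omega(2n)}$). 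So the estimate $\max_{n<n'\le 2n}c_{n'}2^{(n'-2n)/2}\le c_{2n}+2^{1-n}$ is not justified and is not true in general; the cautionary remark in your last paragraph identifies exactly the right pitfall but the claimed fix is itself the error.

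The paper avoids this by bounding $c_{n'}$ directly for every $n'$ in the intermediate range rather than trying to relate $c_{n'}$ to $c_{2n}$. Concretely, for each even $n'=2m$ with $t\le m\le n$, apply Proposition \ref{key} with $m$ in place of $n$ (the hypotheses hold since $a<t/2\le m/2$ and $a<t\le m$), giving $c_{2m}\le 256\cdot t^{4a}2^{3a}f_m^2+14\cdot 2^{-t/2}+8\cdot 2^{-a^2}$; then use Lemma \ref{decreasing f_n} to get $f_m\le 2^{(n-m)/2}f_n$, so $c_{2m}\le 2^{n-m}\bigl(256\cdot t^{4a}2^{3a}f_n^2+14\cdot 2^{-t/2}+8\cdot 2^{-a^2}\bigr)$ and hence $c_{2m}2^{(2m-2n)/2}\le y/2$ with $y$ the target right-hand side. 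Odd $n'=2m+1$ are then handled via $c_{2m+1}\le c_{2m}+2^{-2m}$ (this is the legitimate direction of Proposition \ref{decreasing c_n}), and $n'<2t$ is handled by the trivial bound $c_{n'}\le 1$, giving $c_{n'}2^{(n'-2n)/2}\le 2^{t-n}\le 2^{-t/2}$. Your case split on the defining maximum for $f_{2n}$ is a reasonable framework, but the missing ingredient is this family of applications of Proposition \ref{key} with varying $m$, which is what supplies the needed bounds on $c_{n'}$ for intermediate $n'$.
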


\begin{proof}
    Let $y = 600\cdot t^{4a}2^{3a}f_n^2 + 30\cdot 2^{-t/2} + 20\cdot 2^{-a^2}$, the right hand side of the desired inequality. 
    
    For any $t\leq m\leq n$, we have $f_m \leq 2^{(n - m)/2}f_n$ by Lemma \ref{decreasing f_n}. Hence applying Proposition~\ref{key} with $m$ in place of $n$, we have 
    \begin{align*}
        c_{2m}&\leq 256\cdot t^{4a}2^{3a}f_m^2 + 14\cdot 2^{-t/2} + 8\cdot 2^{-a^2}\\
        &\leq 2^{n - m}\left(256\cdot t^{4a}2^{3a}f_n^2 + 14\cdot 2^{-t/2} + 8\cdot 2^{-a^2}\right)\\
        &\leq 2^{n - m}\cdot y/2. 
    \end{align*}Hence $c_{2m}2^{(2m - 2n)/2}\leq y/2$. By Proposition \ref{decreasing c_n}, $c_{2m + 1}\leq c_{2m} + 2^{-2m}$, hence $c_{2m + 1}2^{(2m + 1 - 2n)/2}\leq \sqrt{2}\left(y/2 + 2^{- m - n}\right)\leq y$ as $y\geq 30\cdot 2^{-t/2}\geq 30\cdot 2^{-m - n}$. Therefore $c_{n'}2^{(n' - 2n)/2}\leq y$ for all $2t\leq n'\leq 2n$. 
    
    For $n'\leq 2t$, $c_{n'}2^{(n' - 2n)/2}\leq 2^{(n' - 2n)/2}\leq 2^{t - n}\leq 2^{-t/2}\leq y$. Hence $c_{n'}2^{(n' - 2n)/2}\leq y$ for all $n'\leq 2n$. We have also that $2^{-(2n)/6}\leq f_n^2\leq y$, hence $f_{2n} = \max\left(2^{-(2n)/6}, \max_{n'\leq 2n}c_{n'}2^{(n' - 2n)/2}\right)\leq y$ as needed. 
\end{proof}

\begin{df}
    Let $h_n = -\log_2 f_n$.
\end{df}

In terms of $h_n$, our aim to prove that $c_n = e^{-\Omega(n)}$ reduces to showing $h_n = \Omega(n)$. We first bound the growth of $h_n$ using the analogous result (Lemma \ref{decreasing f_n}) for $f_n$. 

\begin{lem} \label{increasing h_n}
    For any $n\geq 1$, we have $h_{n + 1}\leq h_n + 1/2$. Furthermore, for any $m\geq n\geq 2$, we have $h_m\geq h_n - 1$. 
\end{lem}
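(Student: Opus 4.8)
Both parts are immediate consequences of Lemma \ref{decreasing f_n}, translated through the monotone decreasing map $x\mapsto -\log_2 x$.

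For the first inequality, the plan is to simply take $-\log_2$ of the lower bound $f_{n+1}\geq 2^{-1/2}f_n$ from Lemma \ref{decreasing f_n}. This gives $h_{n+1}=-\log_2 f_{n+1}\leq -\log_2(2^{-1/2}f_n)=\tfrac12-\log_2 f_n=h_n+\tfrac12$, as required.

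For the second inequality, I would iterate the upper bound $f_{k+1}\leq f_k+2^{-k}$ from Lemma \ref{decreasing f_n}: for $m\geq n$ this telescopes to
\begin{equation*}
f_m\leq f_n+\sum_{k=n}^{m-1}2^{-k}< f_n+2^{-(n-1)}=f_n+2\cdot 2^{-n}.
\end{equation*}
The one point that needs the hypothesis $n\geq 2$ is the estimate $f_n\geq 2\cdot 2^{-n}$: this follows from the floor $f_n\geq 2^{-n/6}$ built into the definition of $f_n$, since $2^{-n/6}\geq 2^{-(n-1)}$ exactly when $n\geq 6/5$. Combining, $f_m< f_n+f_n=2f_n$, and taking $-\log_2$ yields $h_m>-\log_2(2f_n)=h_n-1\geq h_n-1$.

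There is no real obstacle here; the only thing to be careful about is that the comparison $f_n\geq 2\cdot 2^{-n}$ (equivalently, that the additive error accumulated from the recursion $f_{k+1}\leq f_k+2^{-k}$ stays below $f_n$ itself) genuinely requires $n\geq 2$ and makes essential use of the $2^{-n/6}$ term in the definition of $f_n$ — which is precisely the "technical reason" alluded to when that term was introduced.
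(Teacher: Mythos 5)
Your proof is correct and matches the paper's argument exactly: take $-\log_2$ of $f_{n+1}\geq 2^{-1/2}f_n$ for the first part, and for the second part telescope $f_{k+1}\leq f_k+2^{-k}$ to get $f_m\leq f_n+2^{-(n-1)}$, then use the floor $f_n\geq 2^{-n/6}\geq 2^{-(n-1)}$ (valid for $n\geq 2$) to conclude $f_m\leq 2f_n$. You've also correctly identified the role of the technical $2^{-n/6}$ term in the definition of $f_n$, which is precisely what makes this absorption step work.
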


\begin{proof}
    By Lemma \ref{decreasing f_n}, we have $f_{n+1}\geq 2^{-1/2}f_n$. Taking the logarithm gives the first result. 

    To prove the second part of the assertion, note that Lemma \ref{decreasing f_n} implies $f_m\leq f_n + 2^{-n + 1}\leq 2 f_n$ as $f_n\geq 2^{-n/6}\geq 2^{-n + 1}$. Taking the logarithm gives $h_m\geq h_n - 1$ as needed. 
\end{proof}

\begin{lem} \label{key nice}
    For any $n$ with $h_n\geq 4$, we have $h_{2n}\geq 2h_n - 22\sqrt{h_n}\log_2 h_n - 10$. 
\end{lem}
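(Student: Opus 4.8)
The plan is to choose parameters $(a,t)$ appropriately, apply Lemma~\ref{key cor}, and then pass to logarithms. Recall that $h_n=-\log_2 f_n$, and that $f_n=\max(2^{-n/6},\dots)\geq 2^{-n/6}$ by definition, so $h_n\leq n/6$; in particular $4h_n\leq\tfrac23 n$. Assume $h_n\geq 4$, so that $n\geq 6h_n\geq 24$.

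First I would set $a=\lceil\sqrt{2h_n}\,\rceil$ and $t=\lfloor 4h_n\rfloor$. Since $h_n\geq 4$, these are positive integers with $t\leq 4h_n\leq\tfrac23 n$, and writing $v=\sqrt{2h_n}\geq\sqrt8$ we have $t/2\geq 2h_n-\tfrac12=v^2-\tfrac12$ (using $t\geq 4h_n-1$) while $a<v+1$, so the elementary inequality $v^2-v-\tfrac32\geq 0$ (valid for $v\geq 2$) yields $a<t/2$. Hence Lemma~\ref{key cor} applies to the triple $(n,a,t)$. By the choice of $a$ we have $a^2\geq 2h_n$, so $20\cdot 2^{-a^2}\leq 20\cdot 2^{-2h_n}=20f_n^2$; by the choice of $t$ we have $t/2\geq 2h_n-\tfrac12$, so $30\cdot 2^{-t/2}\leq 30\sqrt2\cdot 2^{-2h_n}\leq 43f_n^2$. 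Lemma~\ref{key cor} therefore gives $f_{2n}\leq\big(600\,t^{4a}2^{3a}+63\big)f_n^2\leq 1200\,t^{4a}2^{3a}f_n^2$, using $600\,t^{4a}2^{3a}\geq 63$.

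Taking $-\log_2$ of this bound and substituting $f_n=2^{-h_n}$ turns it into $h_{2n}\geq 2h_n-\log_2 1200-a\big(4\log_2 t+3\big)$, so it only remains to bound the error term. Since $t\leq 4h_n$ we have $\log_2 t\leq 2+\log_2 h_n$, and $a\leq\sqrt{2h_n}+1$, whence
$$\log_2 1200+a\big(4\log_2 t+3\big)\ \leq\ \log_2 1200+\big(\sqrt{2h_n}+1\big)\big(11+4\log_2 h_n\big),$$
and expanding gives $\log_2 1200+11+11\sqrt2\,\sqrt{h_n}+4\log_2 h_n+4\sqrt2\,\sqrt{h_n}\log_2 h_n$. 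Using $h_n\geq 4$ — so that $\log_2 h_n\geq 2$, $\sqrt{h_n}\geq 2$, and $\sqrt{h_n}\log_2 h_n\geq 4$ — each of these five summands is at most an explicit small multiple of $\sqrt{h_n}\log_2 h_n$, and the total comes out below $22\sqrt{h_n}\log_2 h_n$ (indeed around $21\sqrt{h_n}\log_2 h_n$). This gives $h_{2n}\geq 2h_n-22\sqrt{h_n}\log_2 h_n-10$, as required; in fact the additive $-10$ in the statement is not needed here and serves only as slack.

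Conceptually nothing is hard: Lemma~\ref{key cor} already carries all the probabilistic content, and the two things that genuinely require attention are (i) verifying the side conditions $t\in\mathbb{Z}_{>0}$, $t\leq\tfrac23 n$, and $a<t/2$ for the chosen parameters whenever $h_n\geq 4$ — this is where the hypotheses $h_n\geq 4$ and $f_n\geq 2^{-n/6}$ are used — and (ii) keeping the constants tight enough that the error term lands below $22\sqrt{h_n}\log_2 h_n+10$ rather than merely $O(\sqrt{h_n}\log h_n)$. The latter is the only real bookkeeping obstacle, and it is routine.
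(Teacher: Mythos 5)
Your proof is correct and follows essentially the same strategy as the paper's: choose $(a,t)$ so that the two exponentially small error terms in Lemma~\ref{key cor} are at most a constant multiple of $f_n^2$, absorb them into the main term, and take logarithms. The only difference is cosmetic — the paper sets $t=a^2$ with $a=\lfloor\sqrt{4h_n}\rfloor$ whereas you set $a=\lceil\sqrt{2h_n}\rceil$ and $t=\lfloor 4h_n\rfloor$ independently; both choices make $2^{-t/2},2^{-a^2}\lesssim 2^{-2h_n}=f_n^2$, and your slightly smaller $a$ leaves a bit more slack in the final constant.
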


\begin{proof}
    Let $a\geq 3$ and let $t = a^2$. Lemma \ref{key cor} gives 
    \begin{align*}
        f_{2n}&\leq 600\cdot a^{8a}2^{3a}f_n^2 + 30\cdot 2^{-a^2/2} + 20\cdot 2^{-a^2}\\
        &\leq 600\cdot a^{11a}f_n^2 + 50\cdot 2^{-a^2/2}
    \end{align*} whenever $\frac{3}{2}a^2\leq n$. 

    For any $n$ satisfying $h_n\geq 4$, we may take $a = \lfloor{\sqrt{4h_n}}\rfloor$. Indeed, then we have $a\geq 3$ and $\frac{3}{2}a^2\leq \frac{3}{2}4h_n\leq n$, where the last inequality follows from $f_n\geq 2^{-n/6}$. With this choice of $a$, we have 
    \begin{align*}
        f_{2n}&\leq 600\cdot a^{11a}f_n^2 + 50\cdot 2^{-(a + 1)^2/2}2^{(2a + 1)/2}\\
        &\leq 600\cdot a^{11a}f_n^2 + 50\cdot 2^{-2h_n}2^{(2a + 1)/2}\\
        &= \left(600\cdot a^{11a} + 50\cdot 2^{(2a + 1)/2}\right)f_n^2\\
        &\leq 650\cdot a^{11a}f_n^2. 
    \end{align*}
    Taking the logarithm of both sides, we have $h_{2n}\geq 2h_n - 11\sqrt{4h_n}\log_2\left(\sqrt{4h_n}\right) - 10\geq 2h_n - 22\sqrt{h_n}\log_2 h_n - 10$.
\end{proof}

\begin{lem} \label{stronger lem}
    If $h_{n_0}\geq 2^{50}$ for some $n_0$, then $h_n = \Omega(n)$. 
\end{lem}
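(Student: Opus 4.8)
The plan is to iterate Lemma \ref{key nice} along powers of two starting from $n_0$, with a short bootstrapping step to tame the error terms, and then to interpolate between consecutive powers of two using Lemma \ref{increasing h_n}. Write $n_k = 2^k n_0$ and $H_k = h_{n_k}$, so $H_0 = h_{n_0} \geq 2^{50}$. First I would record the elementary fact that for $H \geq 2^{50}$ the error term in Lemma \ref{key nice} is negligible: since $x \mapsto (\log_2 x)/\sqrt{x}$ is decreasing on $[2^{50},\infty)$, one has $22\sqrt{H}\log_2 H + 10 \leq 2^{-13}H$. Feeding this into Lemma \ref{key nice} (which applies since $H_k \geq 2^{50} \geq 4$) and inducting on $k$ shows $H_{k+1} \geq 2H_k - (22\sqrt{H_k}\log_2 H_k + 10) \geq (2 - 2^{-13})H_k > 1.99\,H_k$ for all $k$; in particular $H_k \geq 2^{50}$ for every $k$, and hence $H_k \geq 1.99^k \cdot 2^{50}$.

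Now put $\epsilon_k = (22\sqrt{H_k}\log_2 H_k + 10)/H_k$, so $H_{k+1} \geq 2H_k(1 - \epsilon_k/2)$ and therefore
\[
    H_k \;\geq\; 2^k H_0 \prod_{j=0}^{k-1}\Bigl(1 - \tfrac{\epsilon_j}{2}\Bigr).
\]
This is exactly where the crude geometric growth just obtained is needed: inserting $H_j \geq 1.99^j \cdot 2^{50}$ into the definition of $\epsilon_j$ (and using monotonicity of $(\log_2 x)/\sqrt{x}$ once more) bounds $\epsilon_j$ by the summable expression $22(50+j)\cdot 2^{-25}\cdot 1.4^{-j} + 10\cdot 2^{-50}\cdot 1.99^{-j}$, so that $\sum_{j\geq 0}\epsilon_j < 1$ and consequently $\prod_{j=0}^{k-1}(1 - \epsilon_j/2) \geq 1 - \tfrac12\sum_{j\geq 0}\epsilon_j > \tfrac12$. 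Hence $H_k \geq 2^{k-1}H_0 = 2^{k-1}h_{n_0}$, i.e.\ $h_{n_k}/n_k \geq h_{n_0}/(2n_0)$ for every $k \geq 0$.

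Finally I would pass to an arbitrary $n$. For $n \geq 2n_0$ choose $k \geq 1$ with $2^k n_0 \leq n < 2^{k+1}n_0$; since $n \geq 2^k n_0 \geq 2$, Lemma \ref{increasing h_n} gives $h_n \geq h_{2^k n_0} - 1 \geq \frac{h_{n_0}}{2n_0}\cdot 2^k n_0 - 1 > \frac{h_{n_0}}{4n_0}\cdot n - 1$, using $2^k n_0 > n/2$. As $h_{n_0}/n_0$ is a fixed positive constant and only finitely many $n$ lie below $2n_0$, this yields $h_n = \Omega(n)$, equivalently $c_n \leq f_n = 2^{-h_n} = e^{-\Omega(n)}$.

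The only subtle point is the bootstrapping in the first two paragraphs: the hypothesis by itself only yields $H_k \geq 2^{50}$, which makes $\sum_k \epsilon_k$ diverge, so one must first upgrade this to the geometric lower bound $H_k \geq 1.99^k \cdot 2^{50}$ and only then estimate the error ratios $\epsilon_k$. The remaining ingredients are elementary inequalities and the summation of a convergent geometric-type series.
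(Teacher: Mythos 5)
Your proof is correct, and it takes a genuinely different route from the paper's. Both start from Lemma~\ref{key nice} and both finish by interpolating between powers of two via the second assertion of Lemma~\ref{increasing h_n}, but the way the recursion is solved differs. The paper identifies a single monotone functional: it observes that $\phi(h) := h - 10h^{3/4}$ satisfies $\phi(h_{2n}) \geq 2\phi(h_n)$ whenever $h_n \geq 2^{50}$, so $\phi(h_{2^k n_0})$ exactly doubles at each step and the error term never needs to be tracked again. Your argument instead does a two-stage bootstrap: a crude first pass establishes $H_k \geq 1.99^k H_0$ (which is already linear in $\log$), and this geometric growth is then fed back into the multiplicative correction factors $1 - \epsilon_k/2$ to show $\sum_k \epsilon_k < 1$, recovering the clean bound $H_k \geq 2^{k-1} H_0$. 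Both approaches are valid; the paper's choice of the invariant $h - 10h^{3/4}$ is slicker and avoids tracking an infinite product, while yours is more mechanical and perhaps easier to discover without the right guess of ansatz, since the "bootstrap then sum the errors" pattern is a standard way to tame recursions of the form $x_{k+1} \geq 2x_k - o(x_k)$. One small remark: the numerical checks you rely on (that $22\sqrt{H}\log_2 H + 10 \leq 2^{-13}H$ for $H \geq 2^{50}$, and that the resulting series bound $\sum_j \epsilon_j < 1$) do hold with room to spare, so there is no hidden gap there.
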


\begin{proof}
    We observe that $25\log_2 x\leq x^{1/4}$ for all $x\geq 2^{50}$, and $x - 10\cdot x^{3/4}$ is an increasing function in $x$ for $x\geq 2^{50}$. 
    
    For any $n$ with $h_n\geq 2^{50}$, by Lemma \ref{key nice}, we have 
    \begin{align*}
        h_{2n}&\geq 2h_n - 22\sqrt{h_n}\log_2 h_n - 10\\
        &\geq 2h_n - 25\sqrt{h_n}\log_2 h_n\\
        &\geq 2h_n - h_n^{3/4}. 
    \end{align*}
    Hence for any such $n$,
    \begin{align*}
        h_{2n} - 10\cdot h_{2n}^{3/4}&\geq \left(2h_n - h_n^{3/4}\right) - 10\cdot\left(2h_n - h_n^{3/4}\right)^{3/4}\\
        &\geq 2h_n - h_n^{3/4} - 10\cdot 2^{3/4}h_n^{3/4}\\
        &\geq 2\cdot (h_n - 10\cdot h_n^{3/4}). 
    \end{align*}
    As $h_{2n}\geq h_n\geq 2^{50}$ as well, we may iterate this, giving $h_{2^kn_0} - 10\cdot h_{2^kn_0}^{3/4}\geq 2^k\cdot (h_{n_0} - 10\cdot h_{n_0}^{3/4})$ for any non-negative integer $k$ and $n_0$ with $h_{n_0}\geq 2^{50}$. Hence $h_{2^kn_0}\geq 2^k\cdot (h_{n_0} - 10\cdot h_{n_0}^{3/4})\geq 2^k$ for any non-negative integer $k$. By the second assertion of Lemma \ref{increasing h_n}, this implies that $h_n = \Omega(n)$. 
\end{proof}

\begin{prop}
    We have $c_n = e^{-\Omega(n)}$. 
\end{prop}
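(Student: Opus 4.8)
The plan is to combine the $o(1)$ bound of Proposition \ref{o(1) prop} with the self-improving recursion packaged in Lemma \ref{stronger lem}. Observe that $c_n = e^{-\Omega(n)}$ is equivalent to $h_n = \Omega(n)$ (since $c_n\le f_n=2^{-h_n}$), and Lemma \ref{stronger lem} reduces the latter to exhibiting a \emph{single} index $n_0$ with $h_{n_0}\ge 2^{50}$. So the only thing left to do is to certify that $h_n$ is eventually large.

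First I would use the explicit bound $c_n\le 10\,n^{-1/8}$ from the proof of Proposition \ref{o(1) prop} (together with $c_n\le 1/4$ for all $n$ and $c_n=0$ for $n\le 3$) to show that $f_n\to 0$. Unwinding the definition $f_n=\max\!\left(2^{-n/6},\ \max_{n'\le n}c_{n'}2^{(n'-n)/2}\right)$: for $n'\ge n/2$ we have $c_{n'}2^{(n'-n)/2}\le c_{n'}\le 10(n/2)^{-1/8}$, while for $n'<n/2$ we have $c_{n'}2^{(n'-n)/2}\le 2^{-n/4}$; adding the term $2^{-n/6}$ this gives $f_n=O(n^{-1/8})$, hence $h_n=-\log_2 f_n\to\infty$.

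Consequently there is some $n_0$ with $h_{n_0}\ge 2^{50}$, and Lemma \ref{stronger lem} immediately yields $h_n=\Omega(n)$. Since $f_n\ge c_n$, we conclude $c_n\le f_n=2^{-h_n}=e^{-\Omega(n)}$, as claimed. Combined with Proposition \ref{prop:deg lower}, this completes the proof of Theorem \ref{thm:degenerate}.

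There is essentially no obstacle at this final step: all the substantial work has been carried out in the preceding lemmas — the genuinely hard part being Proposition \ref{key} and its consumption in Lemma \ref{key nice} and Lemma \ref{stronger lem}. The only minor care needed here is the elementary estimate $f_n\to 0$, which merely unwinds the definition of $f_n$ using the polynomial decay of $c_n$ already in hand.
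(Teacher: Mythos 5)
Your proposal is correct and follows essentially the same route as the paper: invoke Proposition \ref{o(1) prop} to see that $f_n\to 0$, hence $h_n\to\infty$, find some $n_0$ with $h_{n_0}\ge 2^{50}$, and then apply Lemma \ref{stronger lem}. The only cosmetic difference is that you carry the explicit $10\,n^{-1/8}$ bound and unwind the definition of $f_n$ in slightly more detail, whereas the paper simply observes $f_n\le\max(c_n,\dots,c_{\lfloor n/2\rfloor},2^{-n/6})=o(1)$.
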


\begin{proof}
    By Proposition \ref{o(1) prop}, we have $c_n = o(1)$. Thus $f_n \leq \max(c_n, \dots, c_{\lfloor{n/2}\rfloor}, 2^{-n/6}) = o(1)$ and therefore $h_n \rightarrow \infty$ as $n\rightarrow \infty$. Hence $h_{n_0}\geq 2^{50}$ for some $n_0$. Therefore by Lemma \ref{stronger lem}, we have $h_n = \Omega(n)$. Thus $c_n \leq f_n = e^{-\Omega(n)}$. 
\end{proof}

\section{Concluding remarks}

In this paper we proved (in Theorem \ref{thm:degenerate}) that the probability that $G(n,1/2)$ is not even-degenerate is $e^{-\Theta(n)}$. Our argument relied on the edge probability being exactly $1/2$. It would be interesting to extend this result to a general $G(n,p)$, at least when $p$ is a constant.

\noindent \textbf{Acknowledgements.} We are grateful to Noga Alon and Leo Versteegen for helpful comments and suggestions.

\bibliographystyle{abbrv}
\bibliography{mybib}

\begin{thebibliography}{10}

\bibitem{alon2024graph}
N.~Alon.
\newblock Graph-codes.
\newblock {\em European Journal of Combinatorics}, 116:103880, 2024.

\bibitem{Alonpersonal}
N.~Alon, personal communication, 2024.

\bibitem{alon2023structured}
N.~Alon, A.~Gujgiczer, J.~K{\"o}rner, A.~Milojevi{\'c}, and G.~Simonyi.
\newblock Structured codes of graphs.
\newblock {\em SIAM Journal on Discrete Mathematics}, 37(1):379--403, 2023.

\bibitem{heath2023edge}
P.~Bennett, E.~Heath, and S.~Zerbib.
\newblock Edge-coloring a graph {$ G $} so that every copy of a graph {$ H $} has an odd color class.
\newblock {\em arXiv preprint arXiv:2307.01314}, 2023.

\bibitem{berger2023k4}
A.~Berger and Y.~Zhao.
\newblock {$K_4$}-intersecting families of graphs.
\newblock {\em Journal of Combinatorial Theory, Series B}, 163:112--132, 2023.

\bibitem{cameron2023new}
A.~Cameron and E.~Heath.
\newblock New upper bounds for the {E}rd{\H{o}}s-{G}y{\'a}rf{\'a}s problem on generalized {R}amsey numbers.
\newblock {\em Combinatorics, Probability and Computing}, 32(2):349--362, 2023.

\bibitem{ellis2012triangle}
D.~Ellis, Y.~Filmus, and E.~Friedgut.
\newblock Triangle-intersecting families of graphs.
\newblock {\em Journal of the European Mathematical Society (EMS Publishing)}, 14(3), 2012.

\bibitem{ge2023new}
G.~Ge, Z.~Xu, and Y.~Zhang.
\newblock A new variant of the {E}rd{\H{o}}s--{G}y{\'a}rf{\'a}s problem on {$K_5$}.
\newblock {\em arXiv preprint arXiv:2306.14682}, 2023.

\bibitem{gowers}
W.~T. Gowers.
\newblock The first unknown case of polynomial {DHJ}.
\newblock \url{https://gowers.wordpress.com/2009/11/14/the-first-unknown-case-of-polynomial-dhj/}, blog post, 2009.

\bibitem{versteegen2023upper}
L.~Versteegen.
\newblock Upper bounds for linear graph codes.
\newblock {\em arXiv preprint arXiv:2310.19891}, 2023.

\end{thebibliography}

\end{document}